\newcommand{\allowpagebreak}
\newtheorem{thm}{Theorem}[section]
\newtheorem{lem}[thm]{Lemma}
\newtheorem{cor}[thm]{Corollary}
\newtheorem{prop}[thm]{Proposition}
\theoremstyle{definition}
\newtheorem{defi}[thm]{Definition}
\newtheorem{rmk}[thm]{Remark}
\newcommand {\emptycomment}[1]{}
\def\Z{\mathbb{Z}}
\def\C{\mathbb{C}}
\newcommand{\al}{\alpha}
\newcommand{\be}{\beta}
\newcommand{\si}{\sigma}
\newcommand{\g}{\mathfrak g}
\newcommand{\hh}{\mathfrak h}
\newcommand{\hg}{\widehat{\mathfrak g}}
\newcommand{\hp}{\widehat{\mathfrak p}}
\newcommand{\h}{{V}}
\newcommand{\frkg}{\mathfrak g}
\newcommand{\fg}{\mathfrak g}
\newcommand{\frkh}{{V}}
\newcommand{\gh}{\mathfrak g \oplus {V}}
\newcommand{\pp}{\mathfrak p}
\newcommand{\dM}{\mathrm{d}}
\newcommand{\E}{\mathrm{E}}
\newcommand{\Hom}{\mathrm{Hom}}
\newcommand{\Der}{\mathrm{Der}}
\newcommand{\Aut}{\mathrm{Aut}}
\newcommand{\End}{\mathrm{End}}
\newcommand{\ad}{\mathrm{ad}}
\newcommand{\id}{\mathrm{id}}
\newcommand{\brh}[1]{   [    #1   ]_{\gh}   }
\begin{document}

\title{ {Wells exact sequence for  automorphisms and derivations of Leibniz 2-algebras
\author{Wei Zhong, Tao Zhang}
 }}

\date{}
\maketitle

\footnotetext{{\it{Keyword}:  Leibniz 2-algebras, abelian extensions, automorphisms, derivations, Wells exact sequence}}

\footnotetext{{\it{MSC}}:  17A32, 18N10.}

\begin{abstract}
In this paper, we investigate the inducibility of pairs of automorphisms and derivations in Leibniz 2-algebras. To begin, we provide essential background information on Leibniz 2-algebras and its cohomology theory. Next, we examine the inducibility of pairs of automorphisms and derivations, with a focus on the analog of Wells exact sequences in the context of Leibniz 2-algebras. We then analyze the analogue of Wells short exact sequences as they relate to automorphisms and derivations within this framework. Finally, we investigate the special case of crossed modules over Leibniz algebras.
\end{abstract}

%\tableofcontents

\section{Introduction}
Recently, there has been increased attention to higher categorical structures in both mathematics and physics. One approach to constructing these structures is through the process of categorification, which involves elevating existing mathematical concepts to higher dimensions. A fundamental example of this is the 2-vector space, which represents the categorification of a traditional vector space. Additionally, Loday \cite{JL} introduced Leibniz algebras, which serve as a generalization of Lie algebras. See \cite{AO,Bra,Loday} for some interesting results about Leibniz algebras. By incorporating a compatible Leibniz algebra structure into a 2-vector space, we can derive a Leibniz 2-algebra \cite{SL}.

In this paper, we aim to explore the inducibility of pairs of automorphisms and derivations in Leibniz 2-algebras, as this concept is closely related to algebraic extensions. The extensibility problem was first addressed by Wells in the context of abstract groups, see \cite{CW}. This theme was later expanded to include various algebraic structures, such as Lie algebras, Rota-Baxter Lie algebras, Rota-Baxter groups, associative conformal algebras, and Lie-Yamaguti algebras (see \cite{VM,AN,SS,BJ,PH,SA,SAS,IM,CW} and the references therein). Building on these findings, we investigate the inducibility of pairs of automorphisms in abelian extensions of Leibniz 2-algebras. This inquiry results in the derivation of an analogue of the Wells exact sequences. Furthermore, we analyze the inducibility of pairs of derivations and establish the corresponding Wells exact sequences for these abelian extensions. Additionally, we also conduct a study on the derivations and automorphisms of crossed modules over Leibniz algebras.

The paper is organized as follows. In Section 2, we recall some basic information about the Leibniz 2-algebras Lie algebra and its cohomology theory. In Section 3, we examine the inducibility problem of a pair of derivations related to an abelian extension of Leibniz 2-algebras and establish Wells exact sequences for these derivations within this framework. In Section 4, we study the inducibility problem of a pair of automorphisms concerning an abelian extension of Leibniz 2-algebras. Additionally, we explore the analogue of Wells short exact sequences related to automorphisms in the context of Leibniz 2-algebras. In Section 5, we focus on the special case of crossed modules over Leibniz algebras.

\section{Abelian extension of Leibniz 2-algebras}\label{sec:cohom}
In this section, we  revisit some notations about Leibniz algebras and Leibniz 2-algebras.
Recall that a Leibniz algebra is a vector space $\pp$  endowed with a linear map $[\cdot,\cdot]:\pp\otimes\pp\longrightarrow\pp$
satisfying
$$
[x,[y,z] ] =[[x,y] ,z] +[y,[x,z]],
$$
for all $x,y,z\in \pp.$

For a Leibniz algebra ${\pp}$, a representation of  ${\pp}$ is a vector space ${M}$ together with two bilinear maps
$$\cdot: {\pp}\otimes {M}\to {M} \,\,\, \text{and}\,\,\, \cdot: {M}\otimes {\pp}\to {M},$$
satisfying the following three axioms
\begin{itemize}
\item[$\bullet$] {\rm(LLM)}\quad  $[x,y]\cdot m=x\cdot (y\cdot  m)-y\cdot (x\cdot  m)$,
\item[$\bullet$] {\rm(LML)}\quad  $m\cdot [x,y]=(m\cdot  x)\cdot  y+x\cdot (m\cdot  y)$,
\item[$\bullet$] {\rm(MLL)}\quad  $m\cdot [x,y]=x\cdot (m\cdot  y)-(x\cdot m)\cdot y.$
\end{itemize}
By (LML) and (MLL) we also have
\begin{itemize}
\item[$\bullet$] {\rm(MMM)}\quad  $(m\cdot x)\cdot y+(x\cdot m)\cdot y=0.$
\end{itemize}

The Loday-Pirashvili cohomology of $\pp$ with coefficients in $V$ is the cohomology of the cochain complex
$C^n(\pp,V)=\Hom_R(\otimes^n\pp,V)$ with the coboundary operator $d_L:C^n(\pp,V)\longrightarrow
C^{n+1}(\pp,V)$ defined by
\begin{eqnarray}
\nonumber&&d_L f^n(x_1,\dots,x_{n+1})\\
\nonumber&=&\sum_{i=1}^n(-1)^{i+1}x_i\cdot f^n(x_1,\dots,\widehat{x_i},\dots,x_{n+1})+(-1)^{n+1}f^n(x_1,\dots,x_k)\cdot x_{n+1}\\
\label{formulapartial}&&+\sum_{1\leq i<j\leq n+1}(-1)^i f^n(x_1,\dots,\widehat{x_i},\dots,x_{j-1},[x_i,x_j],x_{j+1},\dots,x_{n+1}).
\end{eqnarray}
The fact that $d_L\circ d_L=0$ is proved in \cite{LP}.

\begin{defi}[\cite{SL}]\label{defi:Leibniz2}
  A Leibniz 2-algebra $\g=\g_{{1}}\oplus \g_{0}$ consists of the following data:
\begin{itemize}
\item[$\bullet$] a complex of vector spaces $\dM:\g_{{1}}{\longrightarrow}\g_0,$ %\stackrel{\dM}

\item[$\bullet$] bilinear maps $[\cdot,\cdot]:\g_i\times \g_j\longrightarrow
\g_{i+j}$, where $0\leq i+j\leq 1$,

\item[$\bullet$] a  trilinear map $l_3:\g_0\times \g_0\times \g_0\longrightarrow
\g_{{1}}$,
\end{itemize}
   such that for any $x,y,z,t\in \g_0$ and $a,b\in \g_{{1}}$, the following equalities are satisfied:
\begin{itemize}
\item[$\rm(a)$] $\dM [x,a]=[x,\dM a],$
\item[$\rm(b)$]$\dM [a,x]=[\dM a,x],$
\item[$\rm(c)$]$[\dM a,b]=[a,\dM b],$
\item[$\rm(d)$]$\dM l_3(x,y,z)=[x,[y,z]]-[[x,y],z]-[y,[x,z]],$
\item[$\rm(e)$]$ l_3(x,y,\dM a)=[x,[y,a]]-[[x,y],a]-[y,[x,a]],$
\item[$\rm(f)$]$ l_3(x,\dM a,y)=[x,[a,y]]-[[x,a],y]-[a,[x,y]],$
\item[$\rm(g)$]$ l_3(\dM a,x,y)=[a,[x,y]]-[[a,x],y]-[x,[a,y]],$
\item[$\rm(h)$] the Jacobiator identity:
\begin{eqnarray*}
&&[x,l_3(y,z,t)]-[y,l_3(x,z,t)]+[z,l_3(x,y,t)]+[l_3(x,y,z),t]-l_3([x,y],z,t)-l_3(y,[x,z],t)\\
&&-l_3(y,z,[x,t])+l_3(x,[y,z],t)+l_3(x,z,[y,t])-l_3(x,y,[z,t])=0.
\end{eqnarray*}
\end{itemize}
\end{defi}
We usually denote a Leibniz 2-algebra by $(\frkg;\dM_\g,[\cdot,\cdot]_\g,l_3^\g)$, or simply by $\g$.

When $l_3=0$, from Condition $(d)$ in Definition \ref{defi:Leibniz2}, we get that $\fg_0$ is a Leibniz algebra, from $(e)$, $(f)$ and $(g)$ in Definition \ref{defi:Leibniz2}, we obtain that  $\fg_1$ is a representation of  Leibniz algebra $\fg_0$.
Thus a Leibniz 2-algebra is also called a homotopy Leibniz algebra.
When $\dM=0$, then $\fg_0$ a Leibniz algebra, $\fg_1$ a representation of  $\fg_0$ and $l_3\in \Hom(\otimes^3 \fg_0,\fg_1)$ a Loday-Pirashvili 3-cocycle.
This is called a skeletal Leibniz 2-algebra.
In general, $\dM\neq 0$ and $l_3\neq  0$, a Leibniz 2-algebra is a two-term truncation of the so-called $\fg_\infty$ algebra or homotopy  Leibniz algebra.

\begin{defi}\label{defi:Lie-2hom}
Let $(\frkg;\dM_\g,[\cdot,\cdot]_\g,l_3^\g)$ and $(\frkg';\dM',[\cdot,\cdot]',l_3')$ be Leibniz 2-algebras. A
Leibniz 2-algebra homomorphism $F$ from $\frkg$ to $ \frkg'$ consists of  linear maps $F_0:\frkg_0\rightarrow \frkg_0',~F_1:\frkg_{{1}}\rightarrow \frkg_{{1}}'$ and $F_{2}: \frkg_{0}\times \frkg_0\rightarrow \frkg_{{1}}'$,
such that the following equalities hold for all $ x,y,z\in \frkg_{0},
a\in \frkg_{{1}},$
\begin{itemize}
\item [$\rm(i)$] $F_0\dM_\g=\dM'F_1$,
\item[$\rm(j)$] $F_{0}[x,y]_\g-[F_{0}(x),F_{0}(y)]'=\dM'F_{2}(x,y),$
\item[$\rm(k)$] $F_{1}[x,a]_\g-[F_{0}(x),F_{1}(a)]'=F_{2}(x,\dM_\g (a))$,
\item[$\rm(l)$] $F_{1}[a,x]_\g-[F_{1}(a),F_{0}(x)]'=F_{2}(\dM_\g (a),x)$,
\item[$\rm(m)$] $F_1(l_3^\g(x,y,z))-l_3'(F_0(x),F_0(y),F_0(z))=F_2(x, [y,z])- F_2([x,y],z) - F_2(y,[x,z])$\\
  $+ [F_0(x), F_2(y,z)]' - [F_2(x,y), F_0(z)]' - [F_0(y), F_2(x,z)]'.$
\end{itemize}
 If $F_2=0$, the homomorphism $F$ is called a strict homomorphism.
\end{defi}

Let $F:\frkg\to \frkg'$ and $G:\frkg'\to \frkg''$ be two homomorphisms, then their composition $GF:\frkg\to \frkg''$
is a homomorphism defined as $(GF)_0=G_0\circ F_0:\frkg_0\to \frkg''_0$, $(GF)_1=G_1\circ F_1:\frkg_{{1}}\to \frkg''_{{1}}$
and
$$(GF)_2=G_2\circ (F_0\times F_0)+G_1\circ F_2:\frkg_0\times \frkg_0\to \frkg''_{{1}}.$$

Let $ {V}:V_{{1}}\stackrel{\partial}{\longrightarrow} V_0$ be a 2-term complex of vector spaces.
Then we get a new 2-term complex of vector spaces $\End({V}):\End^1(\mathbb
V)\stackrel{\delta}{\longrightarrow} \End^0_\partial({V})$ by
defining $\delta(A)=\partial\circ A+A\circ\partial$ for all
$A\in\End^1({V})$, where $\End^1({V})=\End(V_0,V_{{1}})$
and
$$\End^0_\partial({V})=\{X=(X_0,X_1)\in \End(V_0,V_0)\oplus \End(V_{{1}},V_{{1}})|~X_0\circ \partial=\partial\circ X_1\}.$$
Define  $l_2:\wedge^2 \End({V})\longrightarrow \End(\mathbb
V)$ by setting:
\begin{equation}
\left\{\begin{array}{l}l_2(X,Y)=[X,Y]_C,\\
l_2(X,A)=[X,A]_C,\\
l_2(A,A')=0,\end{array}\right.\nonumber
\end{equation}
 for all $X,Y\in
\End^0_\partial({V})$ and $A,A'\in \End^1({V}),$ where
$[\cdot,\cdot]_C$ is the graded commutator.

\begin{thm} [\cite{LadaMarkl}] \label{thm:End(V)}With the above notations,
$(\End({V}),\delta,l_2)$ is a strict Lie $2$-algebra.
\end{thm}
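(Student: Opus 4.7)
The plan is to verify each axiom of Definition \ref{defi:Leibniz2} directly, taking $l_3=0$ so that the structure is strict, and noting that $l_2$ built from the graded commutator is automatically antisymmetric, so what we obtain is in fact a strict Lie 2-algebra (a Leibniz 2-algebra whose bracket is antisymmetric). The guiding observation is that $\End({V})$ carries the structure of a differential graded Lie algebra with differential $\delta=[\partial,\cdot]_C$ and bracket the graded commutator $[\cdot,\cdot]_C$, so the theorem amounts to a 2-term truncation statement.

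First I would confirm that $\delta$ and $l_2$ are well defined. For $A\in\End^1({V})=\End(V_0,V_{{1}})$, writing $\delta(A)=(\partial\circ A,\,A\circ\partial)$, the first component lies in $\End(V_0)$ and the second in $\End(V_{{1}})$; the identity $(\partial\circ A)\circ\partial=\partial\circ(A\circ\partial)$ holds by associativity, so $\delta(A)\in\End^0_\partial({V})$. Similarly, for $X,Y\in\End^0_\partial({V})$ the commutator $[X,Y]_C=(X_0Y_0-Y_0X_0,\,X_1Y_1-Y_1X_1)$ commutes with $\partial$ by two applications of $X_0\partial=\partial X_1$, and $l_2(X,A)=X_1\circ A-A\circ X_0$ clearly lies in $\End^1({V})$.

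Next I would verify conditions $\rm(a)$--$\rm(h)$. Conditions $\rm(a)$--$\rm(c)$ relate $\delta$ to $l_2$ through $\partial$; each side expands into triple compositions that agree after a single use of the $\partial$-commutation relation defining $\End^0_\partial({V})$. Conditions $\rm(d)$--$\rm(g)$ read as graded Jacobi identities for $[\cdot,\cdot]_C$ applied respectively to $(X,Y,Z)$, $(X,Y,A)$, $(X,A,Y)$, $(A,X,Y)$ with $l_3=0$ on the left; all reduce to the standard identity $[U,[V,W]]_C=[[U,V]_C,W]_C+[V,[U,W]]_C$, a formal consequence of associativity of composition. Condition $\rm(h)$ holds vacuously because every term contains $l_3$.

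The main bookkeeping obstacle lies in $\rm(e)$, $\rm(f)$, $\rm(g)$: one must track carefully on which component of ${V}$ each graded commutator acts so that both sides actually coincide as maps $V_0\to V_{{1}}$. This is handled cleanly by regarding $\partial$ as an odd element of $\End(V_0\oplus V_{{1}})$ and recognising $\delta=[\partial,\cdot]_C$; then $\rm(a)$--$\rm(g)$ become instances of the graded Jacobi identity in the differential graded associative algebra $\End(V_0\oplus V_{{1}})$, eliminating case-by-case manipulation.
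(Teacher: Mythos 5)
Your proposal is correct. Note that the paper itself offers no proof of this theorem---it is quoted from \cite{LadaMarkl}---so there is nothing internal to compare against; your direct verification (well-definedness of $\delta$ and $l_2$ via the relation $X_0\circ\partial=\partial\circ X_1$, conditions $\rm(a)$--$\rm(g)$ of Definition \ref{defi:Leibniz2} as instances of the graded Jacobi identity for the graded commutator, $\rm(h)$ vacuous since $l_3=0$, and graded antisymmetry of $[\cdot,\cdot]_C$ upgrading ``Leibniz'' to ``Lie'') is exactly the standard argument, and the closing observation that $\delta=[\partial,\cdot]_C$ inside $\End(V_0\oplus V_{1})$ packages the bookkeeping correctly.
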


\begin{defi}\label{Def:derivation}
Let $(\g:\g_{1}\stackrel{\dM_\frkg}{\longrightarrow}
\g_{0},[\cdot,\cdot]_\frkg,l_3^\g)$ be a Leibniz 2-algebra. A derivation
of degree $0$
 of $\frkg$ consists of
\begin{itemize}
  \item[$\bullet$] ${D}_0: \g_0\longrightarrow\g_0, {D}_1:\g_1\longrightarrow\g_{1}$ such that ${D}_0 \circ d = d \circ {D}_1$,
  \item[$\bullet$] a bilinear map $D_2:\g_{0}\times\g_0\longrightarrow\g_{1},$
\end{itemize}
such that for all $x,y,z\in\g_0$ and $a\in\g_1$
\begin{itemize}
\item [$\rm(n)$] $D_0[x,y]_\frkg-[D_0x,y]_\frkg-[x,D_0y]_\frkg=\dM_{\g}D_2(x,y),$
\item[$\rm(o)$] $D_0[x,a]_\frkg-[D_0x,a]_\frkg-[x,D_1a]_\frkg=D_2(x,\dM_\g a),$
\item[$\rm(p)$] $D_0[a,x]_\frkg-[D_1a,x]_\frkg-[a,D_0x]_\frkg=D_2(\dM_\g a, x),$
\item[$\rm(q)$] $D_0l_{3}^\g(x,y,z)-l_{3}^\g(D_0x,y,z)-l_{3}^\g(x,D_0y,z)-l_{3}^\g(x,y,D_0z)=D_2([x,y]_\frkg,z)$\\
    $+D_2(y,[x,z]_\frkg)-D_2(x,[y,z]_\frkg)+[D_2(x,y),z]_\frkg+[y,D_2(x,z)]_\frkg-[x,D_2(y,z)]_\frkg.$
\end{itemize}
\end{defi}

%%%%%%%%%%%%%%%%%%%%%%%%%%%%%%%%%%%%%%%%%%%%%%%%%%%%%%%%%%%%%%%%%%%%%
\begin{defi}[\cite{Zhang}]
A representation of a Leibniz 2-algebra $(\frkg; \dM_\g,[\cdot,\cdot]_\g,l_3^\g)$ on a 2-term complex ${V}: \h_{{1}}\overset{\dM_\h}{\longrightarrow}\h_{0}$ is a Leibniz 2-algebra homomorphism $\rho^L=( l_0, l_1, l_2):\frkg\longrightarrow \End({V})$ and a map
$\rho^R=( r_0, r_1,  r_2, m_2):\frkg\longrightarrow \End({V})$
satisfying the following axioms:
\begin{eqnarray}
  \label{LLM1}&& l_0([x,y]_{\g})-[ l_0(x), l_0(y)]=\delta l_2(x,y),\\
  \label{LML2}&&  r_0([x,y]_{\g})-[ l_0(x), r_0(y)]=\delta m_2(x,y),\\
  \label{MLL3}&&  r_0([x,y]_{\g})- l_0(x) r_0(y) - r_0(y) r_0(x)=\delta r_2(x,y),\\[1em]
%%%%%%%%%%%%%%%%%%%%%%%%%%%%%%%%%%%%%%%%%%%%%%%%%%%%%%%%%%%%%%%%%%%%%%%%%%%%%%%%%%
 &&   l_1([x,a]_{\g})-[ l_0(x), l_1(a)]= l_2(x,\dM_{\g}a),\\
 &&   r_1([x,a]_{\g})-[ l_1(x), r_1(a)]= m_2(x,\dM_{\g}a),\\
  &&  r_1([x,a]_{\g})- l_1(x) r_1(a) - r_1(a) r_0(x)= r_2(x,\dM_{\g}a),\\[1em]
%%%%%%%%%%%%%%%%%%%%%%%%%%%%%%%%%%%%%%%%%%%%%%%%%%%%%%%%%%%%%%%%%%%%%%%%%%%%%%%%%%%%%%%
  &&  l_0([a,x]_{\g})-[ l_0(a), l_0(x)]=  l_2(\dM_{\g}a,x)),\\
  &&  r_0([a,x]_{\g})-[ l_0(a), r_0(x)]=  m_2(\dM_{\g}a,x),\\
 &&   r_1([a,x]_{\g})- l_1(a) r_0(x) - r_0(x) r_1(a)= r_2(\dM_{\g}a,x),\\[1em]
\nonumber&& [ l_0(x),  l_2(y,z)] - [ l_2(x,y),  l_0(z)] - [ l_0(y),  l_2(x,z)]\\
&& + l_2(x, [y,z])-  l_2([x,y],z) -  l_2(y,[x,z])= l_1(l_3^\g(x,y,z)),\\[1em]
\nonumber && m_2(x, [y,z])- m_2([x,y],z) -  m_2(y,[x,z])\\
&&+[ l_0(x),  m_2(y,z)]-[ l_2(x,y), r_0(z)]-[ l_0(y), m_2(x,z)]= r_1(l_3^\g(x,y,z)).
\end{eqnarray}
\end{defi}

%By the above axioms \eqref{LML2} and \eqref{MLL3}, we have $$ r_0(y) l_0(x)+ r_0(y) r_0(x)=\delta m_2(x,y)-\delta r_2(x,y),$$

For any Leibniz 2-algebra $(\frkg;\dM_\g,[\cdot,\cdot]_\g,l_3^\g)$, there is a natural {\bf adjoint representation} on itself. The corresponding representation $\ad=(\ad^L_0,\ad^R_0,\ad^L_1,\ad^R_1,\ad^L_2,\ad^M_2,\ad^R_2)$ is given by
\begin{eqnarray*}
&&\ad^L_0(x)(y+b)=[x,y+b]_\g,\quad \ad^R_0(x)(y+b)=[y+b,x]_\g,\\
&&\ad^L_1(a)x=[a,x]_\g,\quad \ad^R_1(a)x=[x,a]_\g,\\
&&\ad^L_2(x,y)z=-l_3^\g(x,y,z),\quad \ad^M_2(x,y)z=-l_3^\g(x,z,y),\\
&&\ad^R_2(x,y)z=-l_3^\g(z,x,y).
\end{eqnarray*}

Next we develop a cohomology theory for a Leibniz 2-algebra $\fg$ with coefficients in a representation $({V},\rho)$.
The $k$-cochian  $C^{k}(\fg,{V})$ is defined to be the space:
\begin{eqnarray}
\Hom(\otimes^{k} \fg_0,V_0)\oplus\Hom(\mathcal{L}^{k-1},V_1)\oplus \Hom(\mathcal{L}^{k-2},V_0)\oplus \Hom(\fg_1^{k-1},V_1)\oplus \cdots
\end{eqnarray}
where
\begin{eqnarray*}
 &&\Hom(\mathcal{L}^{k-1},V_1):=\bigoplus_{i=1}^k  \Hom(\underbrace{{\fg_0}\otimes\cdots\otimes {\fg_0}}_{i-1}\otimes \fg_1\otimes \underbrace{{\fg_0}\otimes\cdots\otimes {\fg_0}}_{k-i},V_1),
 \end{eqnarray*}
is the direct sum of all possible tensor powers of ${\fg_0}$ and $\fg_1$ in which ${\fg_0}$ appears $k-1$ times but $\fg_1$ appears only once  and similarly for $\Hom(\mathcal{L}^{k-2},V_0).$

The cochain complex is given by
\begin{equation} \label{eq:complex}
\begin{split}
 &  V_0\stackrel{D_0}{\longrightarrow} \Hom(\fg_0,V_0)\oplus\Hom(\fg_1,V_1)\oplus\Hom(\otimes^2\frkg_{0},V_{{1}})\stackrel{D_1}{\longrightarrow}\\
 & \Hom(\fg_1,V_0)\oplus \Hom({\fg_0}\otimes {\fg_0}, V_0)\oplus \Hom(\mathcal{L}^{1}V_1)\oplus \Hom(\otimes^3\frkg_{0},V_{{1}})\stackrel{D_2}{\longrightarrow}\\
  &\Hom(\otimes^2\fg_1,V_0)\oplus \Hom(\mathcal{L}^{1},V_0)\oplus
   \Hom(\otimes^3 \fg_0, V_0)\oplus \Hom(\mathcal{L}^{2}, V_1)\oplus
      \Hom(\otimes^4 \fg_0, V_1)\\
  & \stackrel{D_3}{\longrightarrow}\cdots,
\end{split}
\end{equation}
%We define the coboundary map by
%\begin{eqnarray}
%D(\omega,\mu,\nu,\theta)=(-\delta_1\omega, \delta_2(\mu+\nu)-h\omega, -l\omega+\varphi^\sharp(\mu+\nu)-\delta_3\theta),
%\end{eqnarray}
%where $\delta_1,\delta_2,\delta_3$ are  coboundary maps in Leibniz algebra cohomology of $\fg_0$ with coefficient in $V_0$, $\Hom(\fg_1,V_1)$, $\Hom(\fg_1,V_0)$ respectively.

Thus we get a cochain complex $C^k(\fg,{V})$ whose cohomology group
$$H^k(\fg,{V})=Z^k(\fg,{V})/B^k(\fg,{V})$$
is defined as the cohomology group of $\fg$ with coefficients in ${V}$.
We only investigate in detail the first and the second cohomology groups which is needed in the following of this paper as follows.

%Thus, one can write the cochain complex in components as follows:
%$$
%\xymatrix{
%  V_0 \ar[d]_{-\delta_1}\ar[dr]^{\varphi\sharp}& & & & \\
%  \Hom(\fg_0,V_0)\qquad\ar[d]_{-\delta_1}\ar@<.5em>[dr]^ {-h_1}\ar@<1em>[dr]_{-h_2} \ar@<.5em>[drr]^ {\qquad-l_1} \ar@<.1em>[drr]_{\quad\qquad\qquad-l_2} &\hspace{-1cm} \oplus  \Hom(\fg_1,V_1)  \ar[d]_{\delta_2}\ar[dr]^{\varphi^\sharp}&&& \\
%\Hom({\fg_0}\otimes {\fg_0}, V_0)\qquad\ar[d]_{-\delta_1}\ar@<.5em>[dr]^ {-h_1}\ar@<1em>[dr]_{-h_2}
%\ar@<.5em>[drr]^ {\qquad-l_1} \ar@<.1em>[drr]_{\quad\qquad\qquad-l_2} & \oplus \Hom(\mathcal{L}^1, V_1)
%            \ar[d]_{\delta_2}\ar[dr]^{\varphi^\sharp} \ar@<.5em>[drr]^ {\qquad-k_1} \ar@<.1em>[drr]_{\quad\qquad\qquad-k_2}    & \hspace{-.5cm}\oplus\Hom(\fg_1,V_0)\ar[d]_{-\delta_3}\ar[dr]^{\varphi^\sharp}  &&\\
%\Hom(\otimes^3 \fg_0, V_0) &\oplus \Hom(\mathcal{L}^2, V_1)         & \hspace{-.5cm} \oplus \Hom(\mathcal{L}^1,V_0)
%   & \oplus\Hom(\otimes^2\fg_1,V_1)& &  \\
%           }
%$$\\
%$\vdots$

We give the precise formulas for the coboundary operator in low dimension as follows.

%We denote the corresponding cohomology by $\HH(\g;\mu)$.

For any 1-cochain $\Phi=(\phi,\varphi,\chi)$, where $\phi\in\Hom(\frkg_{0},V_{0})$, $\varphi\in \Hom(\frkg_{{1}},V_{{1}})$,  $\chi\in \Hom(\otimes^2\frkg_{0},V_{{1}})$,
it is called a $1$-cocycle if the following equations hold:
\begin{eqnarray}
\label{eq:1-cocycle1}D_1\Phi(a)&=&\dM_\frkh \varphi(a)-\phi \dM_\g (a)=0,\\
\label{eq:1-cocycle2}D_1\Phi(x,y)&=& l_0(x)\phi(y)- r_0(y)\phi(x)-\phi[x,y]_\g+\dM_\frkh\circ \chi(x,y)=0,\\
\label{eq:1-cocycle3}D_1\Phi(x,a)&=& l_0(x)\varphi(a)- r_1(a)\phi(x)-\varphi[x,a]_\g+\chi(x,\dM_\g (a))=0,\\
\label{eq:1-cocycle32}D_1\Phi(a,x)&=& l_1(a)\varphi(x)- r_0(x)\phi(a)-\varphi[a,x]_\g+\chi(\dM_\g (a),x)=0,\\
%\end{eqnarray}
%\begin{eqnarray}
\nonumber D_1\Phi(x,y,z)&=& l_0(x)\chi(y,z)- r_0(z)\chi(x,y)- l_0(y)\chi(x,z)\\
\nonumber&&-\varphi(l_3^\g(x,y,z)+ l_2(x,y)(\phi(z))+ m_2(x,z)(\phi(y))\\
&&+ r_2(y,z)(\phi(x))+\chi(x, [y,z])- \chi([x,y],z) - \chi(y,[x,z])=0.
\end{eqnarray}

%An element of this type is called a derivation from $\g$ to $\h$, and the space of derivations is denoted by $\Der(\g,\h)$.
%Moreover, for any 1-cochain $\phi+\varphi+\chi$, it is a 1-coboundary if and only if the following equations hold:
%\begin{eqnarray}
%\label{eq:1-coboundary1}\phi(x)&=&\rho_0(x)v,\\
%\label{eq:1-coboundary2}\varphi(h)&=&\rho_1(h)v,\\
%\label{eq:1-coboundary3}\chi(y,z)&=&l_3(x,y,z).
%\end{eqnarray}
%An elements of this type is called an inner derivation from $\g$ to $\h$, and the space of inner derivations is denoted by $\Der(\g,\h)$.
%Therefore, we have
%$$H^1(\g,\h)=\Der(\g,\h)/\Inn(\g,\h).$$

For any 2-cochain $(\psi,\omega,\mu,\nu,\theta)$, where $\psi\in \Hom(\frkg_{{1}},V_{0})$, $\omega\in\Hom(\otimes^2\frkg_{0},V_{0})$,
$\mu\in\Hom(\frkg_{{1}}\otimes\frkg_{0},V_{{1}})$, $\nu\in\Hom(\frkg_{0}\otimes\frkg_{{1}},V_{{1}})$, $\theta\in \Hom(\otimes^3\frkg_{0},V_{{1}})$, it is called a 2-coboundary if $(\psi,\omega,\mu,\nu,\theta)=D_1(\phi,\varphi,\chi)$.
It is called a 2-cocycle if the following equalities hold:
\begin{eqnarray}
\label{eq:coc01}&& l_0(x)\psi(a)-\psi([x,a]_\g)+\omega(x,\dM_\g (a))-\dM_V\mu(x,a)=0,\\
\label{eq:coc02}&& r_0(x)\psi(a)-\psi([a,x]_\g)+\omega(\dM_\g (a),x)-\dM_V\nu(a,x)=0,\\
\label{eq:coc03}&& l_1(a)\psi(b)+\nu(a,\dM_\g (b))- r_1(b)\psi(a)-\mu(\dM_\g (a),b)=0,\\[1em]
\nonumber&& l_0(x)\omega(y,z)- r_0(z)\omega(x,y)- l_0(y)\omega(x,z)\\
\label{eq:coc04}&&+\omega(x,[y,z]_\g)-\omega([x,y]_\g,z)-\omega(y,[x,z]_\g)-\partial\circ\theta(x,y,z)-\psi(l_3^\g(x,y,z))=0,\\[1em]
\nonumber&& l_0(x)\mu(y,a)- r_0(a)\omega(x,y)- l_0(y)\mu(x,a)\\
\label{eq:coc05}&&+\mu(x,[y,a]_\g)-\mu([x,y]_\g,a)-\mu(y,[x,a]_\g)-\theta(x,y,\dM_\g (a))+ l_2(x,y)\psi(a)=0,\\[1em]
\nonumber&& l_0(x)\nu(a,y)- r_0(y)\omega(x,a)- l_1(a)\omega(x,y)\\
\label{eq:coc06}&&+\mu(x,[a,y]_\g)-\nu([x,a]_\g,y)-\nu(a,[x,y]_\g)-\theta(x,\dM_\g (a),y)+ m_2(x,y)\psi(a)=0,\\[1em]
\nonumber&& l_1(a)\omega(x,y)- r_0(y)\omega(a,x)- l_0(x)\nu(a,y)\\
\label{eq:coc07}&&+\nu(a,[x,y]_\g)-\nu([a,x]_\g,y)-\mu(x,[a,y]_\g)-\theta(\dM_\g (a),x,y)+ r_2(x,y)\psi(a)=0,\\[1em]
\nonumber&& l_0(x)\theta(y,z,t)- l_0(y)\theta(x,z,t)+ l_0(z)\theta(x,y,t)+ r_0(t)\theta(x,y,z)\\
\nonumber&&-\theta([x,y]_\g,z,t)-\theta(y,[x,z]_\g,t)-\theta(y,z,[x,t]_\g)\\
\nonumber&&+\theta(x,[y,z]_\g,t)+\theta(x,z,[y,t]_\g)-\theta(x,y,[z,t]_\g)\\
\nonumber&& -\{\mu(x,l_3^\g(y,z,t))-\mu(y,l_3^\g(x,z,t))+\mu(z,l_3^\g(x,y,t))+\nu(l_3^\g(x,y,z),t)\\
\nonumber&&- r_2(z,t)\omega(x,y)- m_2(y,t)\omega(x,z)- l_2(y,z)\omega(x,t)\\
\label{eq:coc08}&&+ m_2(x,t)\omega(y,z)+ l_2(x,z)\omega(y,t)- l_2(x,y)\omega(z,t)\}=0.
\end{eqnarray}
The spaces of 2-coboundaries and 2-cocycles are denoted by $B^2(\fg, {V})$ and $Z^2(\fg, {V})$ respectively.
By direct computations, it is easy to see that the space of 2-coboundaries is contained in space of 2-cocycles, thus we obtain a second cohomology group  of a Leibniz 2-algebra $\fg$ with coefficients in ${V}$  defined to be the quotient space
$$H^2(\fg,  {V}))\triangleq Z^2(\fg, {V})/B^2(\fg, {V}).$$

\begin{defi}
Let $(\fg;\dM_\g,[\cdot,\cdot]_\g,l_3^\g)$,  $(\widehat{\mathfrak{g}};\widehat{\dM},[\cdot,\cdot]_{\widehat{\g}},\widehat{l_3})$ be Leibniz 2-algebras and
$i=(i_{0},i_{1}):\h\longrightarrow\widehat{\mathfrak{g}},~~p=(p_{0},p_{1}):\widehat{\mathfrak{g}}\longrightarrow\mathfrak{g}$
be strict homomorphisms. The following sequence of Leibniz 2-algebras is a
short exact sequence if $\mathrm{Im}(i)=\mathrm{Ker}(p)$,
$\mathrm{Ker}(i)=0$ and $\mathrm{Im}(p)=\g$.

\begin{equation}\label{eq:ext2}
\CD
  0 @>0>>  \h_{{1}} @>i_1>> \widehat{\g}_{{1}} @>p_1>> \g_{{1}} @>0>> 0 \\
  @V 0 VV @V \dM_\h VV @V \widehat{\dM} VV @V\dM_\g VV @V0VV  \\
  0 @>0>> \h_{0} @>i_0>> \widehat{\g}_0 @>p_0>> \g_0@>0>>0
\endCD
\end{equation}
We call $\widehat{\mathfrak{g}}$  an extension of $\mathfrak{g}$ by
$\h$, and denote it by $\E_{\widehat{\g}}.$
It is called an abelian extension if $\h$ is abelian, i.e. if $[\cdot,\cdot]_{\h}=0$ and $l^{\h}_3(\cdot,\cdot,\cdot)=0$. We will view $\h$ as subcomplex of $\g$ directly, and omit the map $i$.
\end{defi}

 A splitting $\sigma:\mathfrak{g}\longrightarrow\widehat{\mathfrak{g}}$ of $p:\widehat{\mathfrak{g}}\longrightarrow\mathfrak{g}$
consists of linear maps
$\sigma_0:\mathfrak{g}_0\longrightarrow\widehat{\g}_0$ and
$\sigma_1:\mathfrak{g}_{{1}}\longrightarrow\widehat{\g}_{{1}}$
 such that  $p_0\circ\sigma_0=id_{\mathfrak{g}_0}$ and  $p_1\circ\sigma_1=id_{\mathfrak{g}_{{1}}}$.

 Two extensions of Leibniz 2-algebras
 $\E_{\widehat{\g}}:0\longrightarrow\frkh\stackrel{i}{\longrightarrow}\widehat{\g}\stackrel{p}{\longrightarrow}\g\longrightarrow0$
 and $\E_{\tilde{\g}}:0\longrightarrow\frkh\stackrel{j}{\longrightarrow}\tilde{\g}\stackrel{q}{\longrightarrow}\g\longrightarrow0$ are equivalent,
 if there exists a Leibniz $2$-algebra homomorphism $F:\widehat{\g}\longrightarrow\tilde{\g}$  such that $F\circ i=j$, $q\circ
 F=p$ and $F_2(i(u),\alpha)=0$, for any
 $u\in\frkh_0,~\alpha\in\widehat{\g}_0$.

%%%%%%%%%%%%%%%%%%%%%%%%%%%%%%%%%%%%%%%%%%%%%%%%%%%%%%%%%%%%%%%%%%%%%%%%%%%%%
Let $\widehat{\mathfrak{g}}$ be an abelian extension of $\mathfrak{g}$ by
$\frkh$, and $\sigma:\g\longrightarrow\widehat{\g}$ be a splitting.
Define $\mu=(\rho_0,\rho_1,\rho_2)$ by
\begin{equation}\label{eq:morphism1}
\left\{\begin{array}{rlclcrcl}
l_0,r_0:&\mathfrak{g}_{0}&\longrightarrow& \End^{0}_{\dM_\frkh}(\frkh),&& l_0(x)(u+m)&\triangleq&[\sigma_0(x),u+m]_{\hg},\\
                                                  && & && r_0(x)(u+m)&\triangleq&[u+m, \sigma_0(x)]_{\hg},\\
l_1,r_1:&\mathfrak{g}_{{1}}&\longrightarrow& \End^{1}(\frkh),&&l_1(a)(u)&\triangleq&[\sigma_1(a),u]_{\hg},\\
                                                  &&& &&r_1(a)(u)&\triangleq&[u,\sigma_1(a)]_{\hg},\\
l_2,m_2,r_2:&\mathfrak{g}_{0}\times\mathfrak{g}_{0}&\longrightarrow&\End^{1}(\frkh),&& l_2(x,y)(u)&\triangleq&-\widehat{l_{3}}(\sigma_0(x),\sigma_0(y),u),\\
&&& && m_2(x,y)(u)&\triangleq&-\widehat{l_{3}}(\sigma_0(x),u,\sigma_0(y)),\\
&&& && r_2(x,y)(u)&\triangleq&-\widehat{l_{3}}(u,\sigma_0(x),\sigma_0(y)),
\end{array}\right.
\end{equation}
for any $x,y\in\mathfrak{g}_{0}$, $a\in\mathfrak{g}_{{1}}$,
$u\in\frkh_{0}$ and $m\in\frkh_{{1}}$.

\begin{prop}[\cite{Zhang}]\label{pro:2-modules2}
With the above notations, $\rho=(l_0,r_0,l_1,r_1,l_2,m_2,r_2)$ is a representation of Leibniz $2$-algebra $\g$ on $\h$. Furthermore,  $\rho$ does not depend on the choice of the splitting $\sigma$.
Moreover,  equivalent abelian extensions give the same representation of $\g$ on $\h$.
\end{prop}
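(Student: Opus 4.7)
The plan is to verify the three assertions in turn, using throughout the vector-space decompositions $\hg_0=\sigma_0(\g_0)\oplus\h_0$ and $\hg_1=\sigma_1(\g_1)\oplus\h_1$ afforded by the splitting, together with the abelian hypothesis $[\h,\h]_\hg=0$ and $\widehat{l_3}|_{\h^{\otimes 3}}=0$.

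For the claim that $\rho=(l_0,r_0,l_1,r_1,l_2,m_2,r_2)$ satisfies the eleven representation identities, the strategy is uniform. Since $p_0\sigma_0=\id_{\g_0}$ and $p_0$ is a strict homomorphism, the element $\sigma_0([x,y]_\g)-[\sigma_0(x),\sigma_0(y)]_\hg$ lies in $\ker p_0=\h_0$; write it as $\omega(x,y)$, and introduce analogous discrepancy terms in $\h$ for $\sigma_1([x,a]_\g)$, $\sigma_1([a,x]_\g)$, and for $\widehat{l_3}$ applied to lifts. Substitute these into the defining formulas \eqref{eq:morphism1} of the relevant component of $\rho$, and then into the representation identity one wishes to verify. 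Every $\omega$-correction is annihilated, because it gets bracketed in $\hg$ against another element of $\h$ and $[\h,\h]_\hg=0$; what remains is exactly an instance of one of the Leibniz 2-algebra axioms (a)--(h) for $\hg$. Concretely, the three ``binary'' identities (LLM1)--(MLL3) follow from (d) applied to $\h_0$-inputs and from (e)--(g) applied to $\h_1$-inputs; the six ``mixed'' identities involving $[x,a]_\g$ or $[a,x]_\g$ likewise come from (e), (f), (g); and the two Jacobiator-type identities involving $l_3^\g$ come from the Jacobiator (h). The main obstacle here is the sheer volume of the bookkeeping; no individual reduction is delicate.

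For the independence claim, any two splittings $\sigma,\sigma'$ satisfy $\sigma_0(x)-\sigma_0'(x)\in\h_0$ and $\sigma_1(a)-\sigma_1'(a)\in\h_1$, so the differences of corresponding components of $l_0,r_0,l_1,r_1$ are brackets in $\hg$ with both slots in $\h$, hence zero by abelian-ness; the differences for $l_2,m_2,r_2$ are values of $\widehat{l_3}$ with at least two slots in $\h$, and after using axioms (e)--(g) for $\hg$ to rewrite them as iterated brackets these again collapse. For equivalence, given $F:\hg\to\tilde\g$ with $F|_\h=\id$, the pair $\tilde\sigma:=(F_0\sigma_0,F_1\sigma_1)$ is a splitting of $\tilde\g$ because $q\circ\tilde\sigma=p\circ\sigma=\id_\g$. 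Using the homomorphism axioms (j)--(m) for $F$ together with the vanishing of $F_2$ on $\h$-arguments supplied by the equivalence condition, each bracket $[F_0\sigma_0(\cdot),\cdot]_{\tilde\g}$ used to build the representation of $\tilde\g$ from $\tilde\sigma$ reduces to $F_0$ of the corresponding bracket in $\hg$; since such brackets land in $\h$ and $F$ fixes $\h$ pointwise, this agrees with the original $\rho$. Combining with the independence assertion finishes the proof.
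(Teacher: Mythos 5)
The paper does not actually prove this proposition: it is imported from \cite{Zhang} with no argument given, so there is no in-paper proof to compare yours against. That said, your overall strategy --- decompose along $\hg=\sigma(\g)\oplus\h$, absorb the discrepancies $\psi,\omega,\mu,\nu,\theta$ into $\h$, kill them using abelianness, and read off each representation identity from the corresponding axiom (a)--(h) of $\hg$ --- is the natural one, and it does handle \eqref{LLM1}--\eqref{MLL3} and the six mixed identities correctly.

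There is, however, a genuine gap in your treatment of everything involving $\widehat{l_3}$. Both in the two Jacobiator-type representation identities and in the splitting-independence of $l_2,m_2,r_2$, you are forced to evaluate $\widehat{l_3}$ with \emph{two} arguments in $\h_0$: for instance $\widehat{l_3}(\omega(x,y),\sigma_0(z),u)$ appears when you replace $[\sigma_0(x),\sigma_0(y)]_{\hg}$ by $\sigma_0([x,y]_\g)+\omega(x,y)$ inside $l_2([x,y],z)(u)$, and $\widehat{l_3}(t(x),t(y),u)$ with $t=\sigma_0'-\sigma_0$ appears when comparing two splittings. Your claim that ``every $\omega$-correction is annihilated because it gets bracketed against another element of $\h$'' does not cover these terms, since they sit inside $\widehat{l_3}$ rather than inside a bracket; and your proposed remedy --- rewriting them as iterated brackets via axioms (e)--(g) --- fails, because those axioms only apply when one argument has the form $\widehat{\dM}a$, and a general element of $\h_0$ such as $\omega(x,y)$ or $t(x)$ need not lie in $\Img\,\widehat{\dM}$. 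What is actually needed is the vanishing of $\widehat{l_3}$ whenever at least two of its slots lie in $\h$; this holds manifestly for the normal form \eqref{eq:bracket}, but it does not follow from the paper's stated definition of abelian extension ($[\cdot,\cdot]_{\h}=0$ and $l^{\h}_3=0$ only constrain the case of all slots in $\h$). You must either build this into the abelian hypothesis or prove it separately; as written the step is unjustified. A smaller instance of the same problem occurs in the equivalence part: the definition of equivalence only gives $F_2(i(u),\alpha)=0$, i.e.\ vanishing of $F_2$ when its \emph{first} argument is in $\h$, whereas your reduction also silently uses $F_2(\sigma_0(x),u)=0$.
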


Let $\sigma:\frkg\longrightarrow\widehat{\frkg}$  be a
splitting of the abelian extension \eqref{eq:ext2}. Define the following linear maps:
$$
\begin{array}{rlclcrcl}
\psi:&\frkg_{{1}}&\longrightarrow&\h_{0},&& \psi(a)&\triangleq&\widehat{\dM}\si(a)-\si(\dM_\frkg (a)),\\
\omega:&\otimes^2\frkg_{0}&\longrightarrow&\h_{0},&& \omega(x,y)&\triangleq&[\si(x),\si(y)]_{\widehat{\frkg}}-\si[x,y]_\frkg,\\
\mu:&\frkg_{0}\otimes\frkg_{{1}}&\longrightarrow&\h_{{1}},&& \mu(x,a)&\triangleq&[\si(x),\si(a)]_{\widehat{\frkg}}-\si[x,a]_\frkg,\\
\nu:&\frkg_{{1}}\otimes\frkg_{0}&\longrightarrow&\h_{{1}},&& \nu(a,x)&\triangleq&[\si(a),\si(x)]_{\widehat{\frkg}}-\si[a,x]_\frkg,\\
\theta:&\otimes^3\frkg_{0}&\longrightarrow&\h_{{1}},&&
\theta(x,y,z)&\triangleq&\widehat{l}_{3}(\si(x),\si(y),\si(z))-\si(l_{3}^\frkg(x,y,z)),
\end{array}
$$
for any $x,y,z\in\frkg_{0}$, $a\in\frkg_{{1}}$.

\begin{thm}[\cite{Zhang}]\label{thm:2-cocylce}
Let $\E_{\hg}$ is an abelian extension of $\g$ by $\h$ given by \eqref{eq:ext2}, then $(\psi,\omega,\mu,\nu,\theta)$ is a $2$-cocycle of $\g$ with coefficients in $\frkh$, where the representation is given by $\rho$ above defined.
%Moreover, equivalent abelian extension give equivalent 2-cocycles.
\end{thm}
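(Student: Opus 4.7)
The plan is to verify each of the eight 2-cocycle equations \eqref{eq:coc01}--\eqref{eq:coc08} by lifting elements of $\g$ along the splitting $\sigma$, invoking the corresponding structural axiom (a)--(h) of the Leibniz 2-algebra $\widehat{\g}$, and then rewriting the resulting identity using the defining formulas for $\psi,\omega,\mu,\nu,\theta$ together with the representation formulas \eqref{eq:morphism1}. Because $\h$ is abelian, brackets among elements of $\h$ vanish and $\widehat{l}_3$ vanishes whenever two inputs lie in $\h$; after expansion, the terms that live in $\sigma(\g)$ cancel via the identities already holding in $\g$, and the terms that live in $\h$ reassemble into the desired cocycle relation.

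First I would dispatch \eqref{eq:coc01}--\eqref{eq:coc03}, which correspond to axioms (a), (b), (c). For instance, \eqref{eq:coc01} arises from axiom (a) of $\widehat{\g}$ applied to $(\sigma_0(x),\sigma_1(a))$: expanding both sides of $\widehat{\dM}[\sigma_0(x),\sigma_1(a)]_{\widehat{\g}}=[\sigma_0(x),\widehat{\dM}\sigma_1(a)]_{\widehat{\g}}$, substituting $\widehat{\dM}\sigma_1(a)=\sigma_0\dM_\g(a)+\psi(a)$ and $[\sigma_0(x),\sigma_1(a)]_{\widehat{\g}}=\sigma_1[x,a]_\g+\mu(x,a)$, and cancelling the common $\sigma_0[x,\dM_\g a]_\g$ term via axiom (a) of $\g$, yields \eqref{eq:coc01}. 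The same template handles \eqref{eq:coc02} (axiom (b) with $(\sigma_1(a),\sigma_0(x))$) and \eqref{eq:coc03} (axiom (c) with $(\sigma_1(a),\sigma_1(b))$).

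Equation \eqref{eq:coc04} comes from axiom (d) applied to $(\sigma_0(x),\sigma_0(y),\sigma_0(z))$: one rewrites $\widehat{\dM}\widehat{l}_{3}(\sigma_0(x),\sigma_0(y),\sigma_0(z))$ via $\theta$ and $\psi\circ l_3^\g$, and recasts each double bracket $[\sigma_0(\cdot),[\sigma_0(\cdot),\sigma_0(\cdot)]]_{\widehat{\g}}$ via two applications of $[\sigma_0(u),\sigma_0(v)]_{\widehat{\g}}=\sigma_0[u,v]_\g+\omega(u,v)$. The mixed equations \eqref{eq:coc05}--\eqref{eq:coc07} follow the same pattern from axioms (e)--(g): each Leibniz 2-algebra identity with one distinguished input $\sigma_1(a)$ and two inputs in $\sigma_0(\g_0)$ unfolds into a cocycle relation combining $\mu,\nu$ with $\omega$ and the correction maps $l_2,m_2,r_2$, the position of the $\sigma_1$-input dictating which of $l_2,m_2,r_2$ appears.

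The main obstacle will be \eqref{eq:coc08}, the avatar of the Jacobiator identity (h). Applied to $(\sigma_0(x),\sigma_0(y),\sigma_0(z),\sigma_0(t))$, identity (h) produces ten terms. The four brackets of $\sigma_0$-type with $\widehat{l}_{3}(\sigma_0,\sigma_0,\sigma_0)$ split into a $\sigma_1$-part carrying $\theta$ and an $\h$-part giving the $\mu,\nu$ terms via \eqref{eq:morphism1}; the six $\widehat{l}_{3}(\ldots,[\sigma_0,\sigma_0]_{\widehat{\g}},\ldots)$ terms expand via $[\sigma_0(u),\sigma_0(v)]_{\widehat{\g}}=\sigma_0[u,v]_\g+\omega(u,v)$ into a $\theta$-contribution on $\sigma_0[u,v]_\g$ plus a $\widehat{l}_3$-contribution with one $\omega$-input, which by \eqref{eq:morphism1} is precisely $l_2(\cdot,\cdot)\omega$, $m_2(\cdot,\cdot)\omega$, or $r_2(\cdot,\cdot)\omega$ depending on the position. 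After all ten terms are unfolded, the $\sigma_1$-valued part cancels using the Jacobiator identity of $\g$ itself, and the $\h$-valued remainder is exactly \eqref{eq:coc08}. The principal technical difficulty lies in tracking the signs and disciplinedly matching the positions of the three multiplication maps $l_2,m_2,r_2$ to the slot occupied by the special input.
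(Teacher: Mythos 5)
The paper does not prove this theorem at all: it is quoted from the reference [Zhang] with no argument supplied, so there is no in-paper proof to compare against. Your outline is nonetheless the correct and standard verification — each cocycle identity \eqref{eq:coc01}--\eqref{eq:coc08} is indeed obtained by applying the corresponding axiom (a)--(h) of $\widehat{\g}$ to lifted elements, splitting each bracket as $[\si(u),\si(v)]_{\widehat{\g}}=\si[u,v]_\g+(\text{cochain term})$ and each $\widehat{l}_3$ with one $\h$-input via \eqref{eq:morphism1}, and cancelling the $\si$-valued part by the same axiom in $\g$ (I checked \eqref{eq:coc01}, \eqref{eq:coc03} and \eqref{eq:coc05} against your template and they come out exactly as stated, up to the paper's typo $r_0(a)$ for $r_1(a)$ in \eqref{eq:coc05}); this is essentially the computation one finds in the cited source.
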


Now we can construct a Leibniz 2-algebra structure  on $\g\oplus\frkh$ using the 2-cocycle given above. More precisely, we have
\begin{equation}\label{eq:bracket}
\left\{\begin{array}{rcl}
\dM_{\gh}(a+m)&\triangleq&\dM_\g(a)+\psi(a)+\dM_\frkh(a),\\
\brh{x+u,y+v}&\triangleq&[x,y]_\g+\omega(x,y)+l_0(x)v+r_0(y)u,\\
\brh{x+u,a+m}&\triangleq&[x,a]_\g+\mu(x,a)+l_0(x)m+r_1(a)u,\\
\brh{a+m,x+u}&\triangleq&[a,x]_\g+\nu(a,x)+l_1(a)u+r_0(x)m,\\
l_3^{\gh}(x+u,y+v,z+w)&\triangleq& l_3^\g(x,y,z)+\theta(x,y,z)\\
&&-l_2(x,y)(w)-m_2(x,z)(v)-r_2(y,z)(u),
\end{array}\right.
\end{equation}
for any $x,y,z\in\mathfrak{g}_{0}$, $a\in\mathfrak{g}_{{1}}$,
$u\in\frkh_{0}$ and $m\in\frkh_{{1}}$. Thus any extension $E_{\widehat{\g}}$ given by \eqref{eq:ext2} is
isomorphic to
\begin{equation}\label{ext2}
\CD
  0 @>0>>  \frkh_{{1}} @>i_1>> \g_{{1}}\oplus \frkh_{{1}} @>p_1>> \g_{{1}} @>0>> 0 \\
  @V 0 VV @V \dM_\frkh VV @V \widehat{\dM} VV @V\dM_\g VV @V0VV  \\
  0 @>0>> \frkh_{0} @>i_0>> \g_0\oplus \frkh_0 @>p_0>> \g_0@>0>>0,
\endCD
\end{equation}
where the Leibniz 2-algebra structure on $\frkg\oplus\frkh$ is given by
\eqref{eq:bracket},
$(i_0,i_1)$ is the inclusion and $(p_0,p_1)$ is the projection.

\begin{thm}[\cite{Zhang}]\label{mainthm44}
There is a one-to-one correspondence between equivalence classes of abelian extensions of the Leibniz 2-algebras $\frkg$ by $\h$ and the second cohomology group $H^2(\frkg;{V})$.
\end{thm}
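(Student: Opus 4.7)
The plan is to construct explicit maps in both directions between equivalence classes of abelian extensions of $\frkg$ by $\frkh$ and the cohomology group $H^2(\frkg, V)$, and then verify that these maps are well-defined and mutually inverse.

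\textbf{From extensions to cohomology.} Given an abelian extension $\E_{\hg}$ as in \eqref{eq:ext2}, I pick any splitting $\sigma$ and associate the cohomology class $[(\psi,\omega,\mu,\nu,\theta)] \in H^2(\frkg, V)$ of the 2-cocycle produced by Theorem \ref{thm:2-cocylce}. The first task is to show this class is independent of the chosen splitting: if $\sigma'$ is another splitting, then $\sigma - \sigma'$ takes values in $\frkh$, yielding 1-cochain components $\phi:\frkg_0\to\frkh_0$ and $\varphi:\frkg_{1}\to\frkh_{1}$, and a direct computation with the defining formulas for $\psi,\omega,\mu,\nu,\theta$ shows that the two cocycles differ by $D_1(\phi,\varphi,0)$. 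The second task is to show the class is invariant under equivalence of extensions: given an equivalence $F=(F_0,F_1,F_2):\E_{\hg}\to\E_{\tilde\g}$ and a splitting $\tilde\sigma$ of $\E_{\tilde\g}$, the composite $\sigma = F^{-1}\circ\tilde\sigma$ provides a splitting of $\E_{\hg}$, and the higher component $F_2:\frkg_0\times\frkg_0\to\frkh_{1}$ occupies the $\chi$ slot, so that the corresponding cocycles differ by $D_1(0,0,F_2)$.

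\textbf{From cohomology to extensions.} Conversely, given a 2-cocycle $(\psi,\omega,\mu,\nu,\theta)\in Z^2(\frkg,V)$, I equip $\frkg\oplus\frkh$ with the structure maps in \eqref{eq:bracket} and take $(i_0,i_1)$ and $(p_0,p_1)$ to be the obvious inclusion and projection, obtaining a sequence of the form \eqref{ext2}. The core verification is that the eight conditions $(a)$--$(h)$ of Definition \ref{defi:Leibniz2} hold on $\gh$: unpacking each condition according to whether its inputs lie in $\frkg$ or $\frkh$ decomposes it into sub-identities that follow either from the Leibniz 2-algebra axioms of $\frkg$, from the representation axioms for $V$, or from precisely one of the cocycle equations \eqref{eq:coc01}--\eqref{eq:coc08}. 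For instance, condition $(d)$ applied to three $\frkg_0$-elements gives $(d)$ for $\frkg$ together with \eqref{eq:coc04}; the other conditions pair off similarly with the remaining cocycle identities.

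\textbf{Mutual inverse property.} I then check that cohomologous cocycles produce equivalent extensions: if $(\psi',\omega',\mu',\nu',\theta') - (\psi,\omega,\mu,\nu,\theta) = D_1(\phi,\varphi,\chi)$, then the homomorphism $F$ defined by $F_0(x+u)=x+u+\phi(x)$, $F_1(a+m)=a+m+\varphi(a)$, $F_2(x,y)=\chi(x,y)$ furnishes an equivalence between the two extensions built by the recipe \eqref{eq:bracket}. In the opposite composition, starting from an extension $\E_{\hg}$ with splitting $\sigma$ and reconstructing the extension on $\frkg\oplus\frkh$, the map $(x+u,a+m)\mapsto(\sigma_0(x)+u,\sigma_1(a)+m)$ is an equivalence to the original $\E_{\hg}$. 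The main obstacle throughout is purely combinatorial: with five cocycle components $(\psi,\omega,\mu,\nu,\theta)$ and the higher trilinear map $l_3$ producing many mixed tensor slots, each structural identity on $\gh$ splits into several sub-identities that must be matched against the correct cocycle equation, and care is required to keep track of the $F_2$-component in equivalences when dealing with the $l_3$-term.
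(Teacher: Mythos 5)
Your proposal is correct and follows exactly the route the paper intends: the paper itself defers the proof of Theorem \ref{mainthm44} to the reference \cite{Zhang} and only supplies the ingredients — the cocycle $(\psi,\omega,\mu,\nu,\theta)$ of Theorem \ref{thm:2-cocylce}, the reconstruction \eqref{eq:bracket}, and the normalization of any extension to the form \eqref{ext2} — which are precisely the pieces you assemble. Your handling of the two well-definedness checks (change of splitting giving a coboundary $D_1(\phi,\varphi,0)$, and an equivalence $F$ contributing its $F_2$-component in the $\chi$ slot) and of the mutual-inverse verifications is the standard argument and is sound.
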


\section{Wells exact sequence for  automorphisms}
In this section, we consider the inducibility of a pair of automorphisms in abelian extensions of Leibniz 2-algebras. Theorem \ref{main-thm2} provides
necessary and sufficient conditions for a pair of automorphisms to be inducible.

Let $V$ and $\fg$ be Leibniz 2-algebras. Then an extension of $\fg$ by $V$ is a short exact sequence of Leibniz 2-algebras $$0 \to V \stackrel{i}{\to} \widehat{\g} \stackrel{p}{\to} \g \to 0,$$ where $\widehat{\g}$ is a Leibniz 2-algebra. Without loss of generality, we may assume that $i$ is the inclusion map and we omit it from the notation.

Let $\Aut(V)$, $\Aut(\widehat{\g})$ and $\Aut(\fg)$ denote the groups of all Leibniz 2-algebra automorphisms of $V$, $\widehat{\g}$ and $\fg$, respectively.  Let $\Aut_V(\widehat{\g})$ denote the group of all Leibniz 2-algebra automorphisms of $\widehat{\g}$ which keep $V$ invariant as a set. Note that an automorphism ${F}=({F}_0, {F}_1, {F}_2) \in \Aut_V(\widehat{\g})$ induces automorphisms ${F}|_V=(\beta_0, \beta_1) \in \Aut(V)$ given by
 $$\beta_0(v)={F}_0|_V(v),\quad \beta_1(m)={F}_1|_V(m),$$
 for all $u, v \in V_0, m \in V_1$.
Next we  define a map $\overline{{F}}=(\alpha_0, \alpha_1, \alpha_2) : \mathfrak{g} \rightarrow \mathfrak{g}$ by
\begin{align*}
\alpha_0(x)=p_0{F}_0\sigma_0(x),\quad \alpha_1(a)=p_1{F}_1\sigma_1(a), \quad \alpha_2(x, y)=p_1{F}_2(\sigma_0(x),\sigma_0(y)),
\end{align*}
for all $x, y \in \g_0, a \in \g_1$.

\begin{prop}
Let  ${F}=({F}_0, {F}_1, {F}_2)$  be a Leibniz 2-algebra  automorphism of $\hg$. Then   $\overline{{F}}=(\alpha_0, \alpha_1, \alpha_2)$ defined above is  a Leibniz 2-algebra  automorphism of $\g$.
\end{prop}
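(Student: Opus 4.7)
The plan is to verify that $\overline{F}=(\alpha_0,\alpha_1,\alpha_2)$ satisfies the five conditions $(i)$--$(m)$ of Definition \ref{defi:Lie-2hom} for $\g$, and then to check that $\alpha_0$ and $\alpha_1$ are bijective. The key observation is that the splitting $\sigma$ fails to be a homomorphism precisely by the cocycle data $(\psi,\omega,\mu,\nu,\theta)$, all of which take values in $V$, while the projection $p$ annihilates $V$. Thus for each axiom my recipe will be: rewrite the bracket or differential applied to $\sigma$-images using the defining equations for $\psi,\omega,\mu,\nu,\theta$ given just before Theorem \ref{thm:2-cocylce}, then push through $F$ using its homomorphism property on $\widehat{\g}$, and finally apply $p_0$ or $p_1$; the $V$-valued correction terms die under $p$ (using that $F$ preserves $V$), leaving exactly the identity demanded of $\overline{F}$.

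To illustrate on $(j)$: starting from $\alpha_0[x,y]_\g=p_0F_0\sigma_0[x,y]_\g$, I substitute $\sigma_0[x,y]_\g=[\sigma_0(x),\sigma_0(y)]_{\widehat{\g}}-\omega(x,y)$ so the $\omega$-term vanishes under $p_0F_0$, apply the homomorphism axiom $(j)$ of $F$ to rewrite the $\widehat{\g}$-bracket as $[F_0\sigma_0(x),F_0\sigma_0(y)]_{\widehat{\g}}+\widehat{\dM}F_2(\sigma_0(x),\sigma_0(y))$, and push $p_0$ through to obtain $[\alpha_0(x),\alpha_0(y)]_\g+\dM_\g\alpha_2(x,y)$. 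Conditions $(i)$, $(k)$, $(l)$ follow by exactly the same three-step recipe, substituting $\psi$, $\mu$, $\nu$ for $\omega$.

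Condition $(m)$, the $l_3$-identity, is handled by the same strategy but with much heavier bookkeeping: one expresses $\sigma_1(l_3^\g(x,y,z))$ via $\theta$ and $\widehat{l_3}(\sigma_0x,\sigma_0y,\sigma_0z)$, invokes axiom $(m)$ for $F$ on $\widehat{\g}$, and uses $p_1$ to kill the $\theta$-correction as well as all cross terms built from $F_2$ paired with $\omega$-type corrections. I expect this to be the main technical obstacle, since the signs and the mixed composition pattern $(GF)_2=G_2\circ(F_0\times F_0)+G_1\circ F_2$ must be tracked carefully through the six $F_2$-and-bracket terms on the right-hand side of $(m)$; however, all terms that could spoil the identity live in $V$ and are therefore annihilated after the final application of $p_1$.

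Finally, bijectivity of $\alpha_0$ and $\alpha_1$ follows because $F\in\Aut_V(\widehat{\g})$ preserves $V$, so it descends to an automorphism of the quotient complex $\widehat{\g}/V$; the identifications $\widehat{\g}_0/V_0\cong\g_0$ and $\widehat{\g}_1/V_1\cong\g_1$ induced by $p$ and $\sigma$ transport this descended automorphism to $(\alpha_0,\alpha_1)$, which is therefore bijective. Combined with the homomorphism verification above, this shows $\overline{F}$ is a Leibniz 2-algebra automorphism of $\g$.
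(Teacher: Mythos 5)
Your proposal is correct and follows essentially the same route as the paper's own proof: for each homomorphism axiom the paper likewise rewrites $\sigma$-images of brackets and of $\dM_\g$, $l_3^\g$ via the cocycle data $(\psi,\omega,\mu,\nu,\theta)$, applies the homomorphism identities of $F$ on $\widehat{\g}$, and projects by $p$ so that the $V$-valued corrections vanish. Your quotient-space argument for the bijectivity of $\alpha_0,\alpha_1$ is a slightly cleaner formulation of the paper's remark that $F$ preserves $V$ and hence induces bijections on $\g$, but it is the same idea.
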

\begin{proof}
 Since ${F}$ is an automorphism on $\widehat{\g}$ preserving the space $V$, it also preserves the subspace $\mathfrak{g}$, it is easy to see that $\al_0$ and $\al_1$ are bijective maps on $\fg_0$ and $\fg_1$.

For any $x,y \in \g_0, u, v \in \g_1$, we have
\begin{align*}
(\alpha_0\dM_{\g}-\dM_{\g}\alpha_1)(a)=&(p_0{F}_0\sigma_0\dM_{\g}-\dM_{\g}p_1{F}_1\sigma_1)(a)\\
=&p_0{F}_0\sigma_0\dM_{\g}-p_1{F}_1\widehat{\dM}\sigma_1)(a)\\
=&p_0{F}(\sigma_0\dM_{\g}-\widehat{\dM}\sigma_1)(a)\\
=&0,\\
\dM_{\g}\alpha_2(x,y)=&\dM_{\g}p_1{F}_2(\sigma_0(x), \sigma_0(y))\\
=&p_0\widehat{\dM}{F}_2(\sigma_0(x), \sigma_0(y))\\
=&p_0(\alpha_0([\sigma_0(x), \sigma_0(y)])-[{F}0(\sigma(x)), {F}0(\sigma(y))])\\
=&p_0\alpha_0(\omega(x, y)+ \sigma_0([x,y]))-[\alpha_0(x), \alpha_0(y)]\\
=&\alpha_0([x,y])-[\alpha_0(x), \alpha_0(y)].
\end{align*}

Similarly, we have
\begin{align*}
\alpha_2(x, \dM_{\g}(a))=&\alpha_1[x,a]-[\alpha_0(x),\alpha_1(a)],\\
\alpha_2(\dM_{\g}(a),x)=&\alpha_1[a, x]-[\alpha_1(a), \alpha_0(x)].
\end{align*}
Next, we have
\begin{align*}
&\alpha_1(l_3^\g(x,y,z))-l_3^\g(\alpha_0(x),\alpha_0(y),\alpha_0(z))\\
=&\alpha_1(\widehat{l}_3(\sigma_0(x),\sigma_0(y),\sigma_0(z))-\theta(x,y,z))\\
&~-l_3^\g(p_0{F}_0\sigma_0(x),p_0{F}_0\sigma_0(y),p_0{F}_0\sigma_0(z))\\
=&p_1\big({F}_1\widehat{l}_3(\sigma_0(x),\sigma_0(y),\sigma_0(z))\\
&~-\widehat{l}_3({F}_0\sigma_0(x),{F}_0\sigma_0(y),{F}_0\sigma_0(z))\big)\\
=&p_1{F}_2(\sigma_0(x), [\sigma_0(y),\sigma_0(z)])- p_1{F}_2([\sigma_0(x),\sigma_0(y)],\sigma_0(z))\\
 &~- p_1{F}_2(\sigma_0(y),[\sigma_0(x),\sigma_0(z)])+ p_1[{F}_0(\sigma_0(x)), {F}_2(\sigma_0(y),\sigma_0(z))]\\
  &~- p_1[{F}_2(\sigma_0(x),\sigma_0(y)), {F}_0(\sigma_0(z))] - p_1[{F}_0(\sigma_0(y)), {F}_2(\sigma_0(x),\sigma_0(z))]\\
 =&\alpha_2(x, [y,z])- \alpha_2([x,y],z) - \alpha_2(y,[x,z])\\
 &~+[\alpha_0(x), \alpha_2(y,z)] - [\alpha_2(x,y), \alpha_0(z)] - [\alpha_0(y), \alpha_2(x,z)].
\end{align*}
This shows that $\overline{{F}} : \mathfrak{g} \rightarrow \mathfrak{g}$ is a Leibniz 2-algebra automorphism.
\end{proof}

 Now we obtain a group homomorphism
$$\Phi: \Aut_V(\widehat{\g}) \to \Aut(V) \times \Aut(\fg)$$
given by
$$\Phi({F})= ({F}|_V, \overline{{F}}).$$

A pair of automorphisms $(\beta, \alpha) \in \Aut(V)\times \Aut(\fg)$ is called  \textbf{inducible} if there exists a ${F} \in \Aut_V(\widehat{\g})$ such that
$$\Phi({F}_0)= (\beta_0, \alpha_0),\quad \Phi({F}_1)= (\beta_1, \alpha_1),\quad \Phi({F}_2)= (0, \alpha_2).$$

\begin{thm}\label{main-thm2}
Let $0 \to V \to \hg \to \fg \to 0$ be an abelian extension of Leibniz 2-algebras   and $(\beta, \alpha) \in \Aut(V)\times \Aut(\fg)$. Then the pair $(\beta, \alpha)$ is inducible if and only if exists a linear map $\lambda=(\lambda_0,\lambda_1,\lambda_2) \in \mathrm{Hom}(\mathfrak{g}, V)$ satisfying the followings
{\footnotesize
\begin{align}
\label{COC1}&\beta_0(\psi (a)) - \psi (\alpha_1(a)) = \dM_{V}(\lambda_1(a))-\lambda_0 (\dM_{\g}(a)),\\
&\beta_0 (\omega (x, y)) - \omega (\alpha_0(x), \alpha_0(y)) =  l_0({\alpha_0(x)}) (\lambda_0(y)) + \label{COC2}r_0({\alpha_0 (y)}) (\lambda_0(x))  - \lambda_0 ([x, y])+\dM_{V}\lambda_2(x,y)  \nonumber\\
&+\psi(\alpha_2(x,y)),\\
\label{COC3}&\beta_1 (\mu (x, a)) - \mu (\alpha_0(x), \alpha_1(a)) =  l_0({\alpha_ 0(x)})( \lambda_1(a)) + r_1({\alpha_1 (a)})( \lambda_0(x) ) - \lambda_1 ([x, a])+\lambda_2(x,\dM_{\g}(a)),\\
\label{COC4}&\beta_1 (\nu (a, x)) - \nu (\alpha_1(a), \alpha_0(x)) =  r_0({\alpha_0 (x)})( \lambda_1(a)) + l_1({\alpha_1 (a)})( \lambda_0(x) ) - \lambda_1 ([a, x])+\lambda_2(\dM_{\g}(a),x),\\
\label{COC5}&\beta_1\theta(x,y,z)-\theta(\alpha_0(x),\alpha_0(y),\alpha_0(z))-\mu(\alpha_0(x),\alpha_2(y,z))+\nu(\alpha_2(x,y),\alpha_0(z))+r_1(\alpha_2(x,z))(\lambda_0(y))\nonumber\\
&+\mu(\alpha_0(y),\alpha_2(x,z))= -l_2(\alpha_0(x),\alpha_0(y))(\lambda_0(z))- m_2(\alpha_0(x),\alpha_0(z))(\lambda_0(y))-\lambda_1(l^{\g}_3(x,y,z)) \nonumber\\
&- r_2(\alpha_0(y),\alpha_0(z))(\lambda_0(x))+\lambda_2(x,[y,z])-\lambda_2([x,y],z)-\lambda_2(y,[x,z])+ l_0(\alpha_0(x))(\lambda_2(y,z))\nonumber\\
&+ r_1(\alpha_2(y,z))(\lambda_0(x))- r_0(\alpha_0(z))(\lambda_2(x,y))
- l_1(\alpha_2(x,y))(\lambda_0(z))- l_0(\alpha_0(y))(\lambda_2(x,z)),\\
\label{COC6}&\beta_0 { l_0}(x){\beta_0}^{-1} (v) ={ l_0}({\alpha_0 (x)})(v),\\
\label{COC7}&\beta_0 { r_0}(x){\beta_0}^{-1} (v) = { r_0}({\alpha_0 (x)})  (v),\\
\label{COC8}&\beta_1 l_0(x){\beta_1}^{-1}(m)= l_0(\alpha_0(x))(m),\\
\label{COC9}&\beta_1 r_0(x){\beta_1}^{-1}(m)= r_0(\alpha_0(x))(m),\\
\label{COC10}&\beta_1 r_1(a){\beta_0}^{-1}(v)= r_1(\alpha_1(a))(v),\\
\label{COC11}&\beta_1 l_1(a){\beta_0}^{-1}(v)= l_1(\alpha_1(a))(v),\\
\label{COC12}&l_2(\alpha_0(x),\alpha_0(y))(w)+ l_1\alpha_2(x,y)(w)=\beta_1 l_2(x,y){\beta_0}^{-1}(w),\\
\label{COC13}&r_2(\alpha_0(x),\alpha_0(y))(w)- r_1\alpha_2(x,y)(w)=\beta_1 r_2(x,y){\beta_0}^{-1}(w),\\
\label{COC14}&m_2(\alpha_0(x),\alpha_0(y))(w)+ r_1\alpha_2(x,y)(w)=\beta_1 m_2(x,y){\beta_0}^{-1}(w).
\end{align}
}
\end{thm}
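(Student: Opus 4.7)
The plan is to prove the two directions separately, using the fact (from Theorem \ref{mainthm44} and equation \eqref{eq:bracket}) that after fixing a splitting $\sigma$, we may identify $\widehat{\g}$ with $\g\oplus V$ with bracket and $l_3$ expressed via the defining 2-cocycle $(\psi,\omega,\mu,\nu,\theta)$.

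For the \emph{necessity} direction, suppose $F=(F_0,F_1,F_2)\in\Aut_V(\widehat{\g})$ induces $(\beta,\alpha)$. Since $p_0F_0\sigma_0=\alpha_0$ and $p_1F_1\sigma_1=\alpha_1$, the elements $F_0\sigma_0(x)-\sigma_0\alpha_0(x)$ and $F_1\sigma_1(a)-\sigma_1\alpha_1(a)$ lie in $V_0$ and $V_1$ respectively; similarly $F_2(\sigma_0(x),\sigma_0(y))-\sigma_1\alpha_2(x,y)$ lies in $V_1$. I would define
\[
\lambda_0(x):=F_0\sigma_0(x)-\sigma_0\alpha_0(x),\quad \lambda_1(a):=F_1\sigma_1(a)-\sigma_1\alpha_1(a),\quad \lambda_2(x,y):=F_2(\sigma_0(x),\sigma_0(y))-\sigma_1\alpha_2(x,y).
\]
Then (COC1) is obtained by applying axiom (i) ($F_0\widehat{\dM}=\widehat{\dM}F_1$) to $\sigma_1(a)$ and substituting the definition of $\psi$; (COC2) comes from axiom (j) applied to $(\sigma_0(x),\sigma_0(y))$ after expanding $[\sigma_0(x),\sigma_0(y)]_{\widehat{\g}}=\sigma_0[x,y]_\g+\omega(x,y)$; axioms (k) and (l) similarly yield (COC3) and (COC4); and (COC5) is precisely the consequence of axiom (m) evaluated on $(\sigma_0(x),\sigma_0(y),\sigma_0(z))$ together with the defining formula $\widehat{l}_3(\sigma_0(x),\sigma_0(y),\sigma_0(z))=\sigma_1 l_3^\g(x,y,z)+\theta(x,y,z)$. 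Conditions (COC6)--(COC14) come from applying the same homomorphism axioms (j), (k), (l), (m) to mixed elements in which one entry is a pure element $v\in V_0$ or $m\in V_1$; these encode the compatibility of $F|_V=\beta$ with the induced representation $\rho=(l_0,r_0,l_1,r_1,l_2,m_2,r_2)$ of \eqref{eq:morphism1}.

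For \emph{sufficiency}, assume a $\lambda=(\lambda_0,\lambda_1,\lambda_2)$ satisfying (COC1)--(COC14) is given. Using the identification $\widehat{\g}\cong\g\oplus V$, I define $F=(F_0,F_1,F_2)$ on $\widehat{\g}$ by
\[
F_0(x+u):=\alpha_0(x)+\lambda_0(x)+\beta_0(u),\quad F_1(a+m):=\alpha_1(a)+\lambda_1(a)+\beta_1(m),
\]
and
\[
F_2(x+u,y+v):=\alpha_2(x,y)+\lambda_2(x,y),
\]
for $x,y\in\g_0$, $a\in\g_1$, $u,v\in V_0$, $m\in V_1$. Bijectivity of $F_0$ and $F_1$ is immediate since $\alpha_0,\alpha_1,\beta_0,\beta_1$ are bijective and the $\lambda_i$'s map into $V$. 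It remains to verify that $F$ satisfies the homomorphism axioms (i)--(m) of Definition \ref{defi:Lie-2hom}. Expanding both sides with the bracket formulas \eqref{eq:bracket} and separating terms by their $\g$- and $V$-components reduces each axiom to either: (a) the assumption that $\alpha$ is an automorphism of $\g$, (b) the statement that $\rho$ is a representation on $V$, or (c) one of the cocycle-type relations (COC1)--(COC14). Finally, $F$ preserves $V$ as a set with $F|_V=\beta$ and $\overline{F}=\alpha$ by construction, which establishes inducibility.

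The main obstacle will be the bookkeeping in axiom (m) of Definition \ref{defi:Lie-2hom}, whose verification corresponds exactly to relation (COC5): since both $\widehat{l}_3$ and $l_3^\g$ expand via the 2-cocycle term $\theta$ and the representation terms $l_2,m_2,r_2$, one must carefully track eight or more terms coming from the deformed bracket in the right-hand side of the axiom, and match each to the appropriate summand on the right-hand side of (COC5). The checks of (COC6)--(COC14) are routine once one observes that they simply state that $\beta$ intertwines the $\g$- and $\alpha(\g)$-actions up to the $\lambda_2$-correction induced by $\alpha_2$, but the Jacobiator-level compatibility in (COC5) is where the argument is most delicate.
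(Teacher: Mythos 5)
Your proposal is correct and follows essentially the same route as the paper's own proof: both directions define $\lambda$ as the difference $F\sigma-\sigma\alpha$ (resp.\ reconstruct $F$ from $\alpha+\lambda+\beta$ on $\g\oplus V$), derive (\ref{COC1})--(\ref{COC5}) from the homomorphism axioms evaluated on $\sigma$-lifts, and obtain (\ref{COC6})--(\ref{COC14}) from the axioms applied to mixed elements with one entry in $V$. The only point the paper spells out that you compress is the explicit injectivity/surjectivity check of the reconstructed $F$, but your triangular-structure argument covers it.
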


\begin{proof}
Suppose that there exists a ${F} \in \Aut_V(\hg)$ such that $\Phi({F})=(\beta, \alpha)$. We observe that
\begin{align*}
p_0 \big(  ({F}_0 \si_0 - \si_0 \alpha_0)(x)  \big) =&~ \alpha_0 (x) - \alpha_0 (x) = 0 ~~~~ (\text{as } p_0\si_0 = \mathrm{id}_{\mathfrak{g}_0}),\\
p_1 \big(  ({F}_1 \si_1 - \si_1 \alpha_1)(x)  \big) =&~ \alpha_1 (x) - \alpha_1 (x) = 0 ~~~~ (\text{as } p_1\si_1 = \mathrm{id}_{\mathfrak{g}_1}),\\
p_1 \big(  ({F}_2(\si_0,\si_0) - \si_1 \alpha_2)(x,y)  \big) =&~ \alpha_2 (x) - \alpha_2(x) = 0 ~~~~(\text{as } p_1\si_1 = \mathrm{id}_{\mathfrak{g}_1}).
\end{align*}
We define a map $\lambda=(\lambda_0,\lambda_1,\lambda_2): \mathfrak{g} \rightarrow V$ by
\begin{align*}
\lambda_0(x)=&~{F}_0 \sigma_0(x)-\sigma_0\alpha_0(x),\\
\lambda_1(a)=&~{F}_1 \sigma_1(a)-\sigma_1\alpha_1(a),\\
\lambda_2(x,y)=&~{F}_2(\sigma_0(x),\sigma_0(y))-\sigma_1\alpha_2(x,y).
\end{align*}
We utilize the property that ${F}$ is a homomorphism of Leibniz 2-algebras. Let $e_1=\si_0(x)+u$, $e_2=\si_0(y)+v$, $e_3=\si_0(z)+w$ and $h_1=\si_1(a)+m$, for all $x, y, z \in \g_0$, $u,v \in V_0$, $a \in \g_1$ and $ m\in V_1$. Then
\begin{align}
&\label{homo1}{F}_0\widehat{\dM}(h_1)=\widehat{\dM}{F}_1(h_1),\\
&\label{homo2}{F}_{0}[e_1,e_2]_\g-[{F}_{0}(e_1),{F}_{0}(e_2)]=\widehat{\dM}{F}_{2}(e_1,e_2),\\
&\label{homo3}{F}_{1}[e_1,h_1]_\g-[{F}_{0}(e_1),{F}_{1}(h_1)]={F}_{2}(e_1,\widehat{\dM}_\g (h_1)),\\
&\label{homo4}{F}_{1}[h_1,e_1]_\g-[{F}_{1}(h_1),{F}_{0}(e_1)]={F}_{2}(\widehat{\dM} (h_1),e_1),\\
&\label{homo5}{F}_1(\widehat{l_3}(e_1,e_2,e_3))-\widehat{l_3}({F}_0(e_1),{F}_0(e_2),{F}_0(e_3))={F}_2(e_1, [e_2,e_3])- {F}_2([e_1,e_2],e_3)\nonumber\\
  & - {F}_2(e_2,[e_1,e_3])+ [{F}_0(e_1), {F}_2(e_2,e_3)]' - [{F}_2(e_1,e_2), {F}_0(e_3)] - [{F}_0(e_2),{F}_2(e_1,e_3)].
\end{align}

First, we compute
\begin{align*}
{F}_0\big( l_0(x)(v)\big)={F}_0\big([\si_0(x),v]\big)=&[{F}_0\si_0(x),{F}_0(v)]+\widehat{\dM}{F}_2(\si_0(x),v)\\
=&[\lambda_0(x)+\si_0\alpha_0(x),\beta_0(v)]= l_0(\alpha_0(x))(\beta_0(v)),
\end{align*}
\begin{align*}
{F}_0\big( r_0(x)(v)\big)={F}_0\big([v,\si_0(x)]\big)=&[{F}_0(v),{F}_0\si_0(x)]+\widehat{\dM}{F}_2(v,\si_0(x))\\
=&[\beta_0(v),\lambda_0(x)+\si_0\alpha_0(x)]= r_0(\alpha_0(x))(\beta_0(v)).
\end{align*}
\begin{align*}
&{F}_1 l_2(x,y)(w)\\
=&-{F}_1\widehat{l_3}(\si_0(x),\si_0(y),w)\\
=&\widehat{l_3}({F}_0\si_0(x),{F}_0\si_0(y),{F}_0(w))-{F}_2(\si_0(x), [\si_0(y),w])+ {F}_2([\si_0(x),\si_0(y)],w)\\
&~+ {F}_2(\si_0(y),[\si_0(x),w])- [{F}_0\si_0(x), {F}_2(\si_0(y),w)] + [{F}_2(\si_0(x),\si_0(y)), {F}_0(w)] \\
&~+ [{F}_0\si_0(y), {F}_2(\si_0(x),w)]\\
=&\widehat{l_3}({F}_0\si_0(x),{F}_0\si_0(y),{F}_0(w))+ [{F}_2(\si_0(x),\si_0(y)), {F}_0(w)]\\
=&\widehat{l_3}(\lambda_0(x)+\si_0\alpha_0(x),\lambda_0(y)+\si_0\alpha_0(y),\beta_0(w))+[\lambda_2(x,y)+\si_1\alpha_2(x,y),\beta_0(w)]\\
=&-\widehat{l_3}(\si_0\alpha_0(x),\si_0\alpha_0(y),\beta_0(w))+[\lambda_2(x,y)+\si_1\alpha_2(x,y),\beta_0(w)]\\
=& l_2(\alpha_0(x),\alpha_0(y))(\beta_0(w))+ l_1\alpha_2(x,y)(\beta_0(w)).
\end{align*}
Similarly, we have
\begin{align*}
{F}_1 r_2(x,y)(w)=r_2(\alpha_0(x),\alpha_0(y))(\beta_0(w))- r_1\alpha_2(x,y)(\beta_0(w)),\\
{F}_1 m_2(x,y)(w)= m_2(\alpha_0(x),\alpha_0(y))(\beta_0(w))+ r_1\alpha_2(x,y)(\beta_0(w)).
\end{align*}

For \eqref{homo1}, we have
\begin{align*}
&{F}_0\widehat{\dM}(\sigma_1(a)+m))\\
=&{F}_0(\psi(a)+\sigma_0\dM_\g(a)+\dM_V(m))\\
=&\beta_0\psi(a)+\lambda_0(\dM_\g(a))+\beta_0\dM_V(m)+\alpha_0\sigma_0\dM_\g(a),\\
\\
&\widehat{\dM}{F}_1(\sigma_1(a)+m)\\
=&\widehat{\dM}(\lambda_1(a)+\sigma_1(\alpha_1(a))+\beta_1(m))\\
=&\psi(\alpha_1(a))+\sigma_0\dM_\g\alpha_1(a)+\dM_V(\lambda_1(a))+\dM_V(\beta_1(m)).
\end{align*}
Thus, the two sides are equal if and only if condition (\ref{COC1}) holds.

For \eqref{homo2}, we get
\begin{align*}
&{F}_{0}[\sigma_0(x)+u,\sigma_0(y)+v]-[{F}_{0}(\sigma_0(x)+u),{F}_{0}(\sigma_0(y)+v)]\\
=&\beta_0\omega(x,y)+\sigma_0\alpha_0[x,y]+\lambda_0[x,y]+\beta_0 l_0(x)(v)+\beta_0 r_0(y)(u)\\
&~- l_0(\alpha_0(x))(\lambda_0(y))- l_0(\alpha_0(x))(\beta_0(v))- r_0(\alpha_0(y))(\lambda_0(x))\\
&~- r_0(\alpha_0(y))(\beta_0(u))-\omega(\alpha_0(x),\alpha_0(y))-\sigma_0[\alpha_0(x),\alpha_0(y)],\\
\\
&\widehat{\dM}{F}_{2}(\sigma_0(x)+u,\sigma_0(y)+v)\\
=&\psi(\alpha_2(x,y))+\sigma_0\dM_\g\alpha_2(x,y)+\dM_V\lambda_2(x,y)
\end{align*}
Thus, the two sides are equal if and only if condition (\ref{COC2}) holds.

Similarly, from \eqref{homo3} and \eqref{homo4}, we derive conditions \eqref{COC3} and \eqref{COC4}.

For \eqref{homo5}, the left hand side is equal to
\begin{align*}
&{F}_1(\widehat{l_3}(e_1,e_2,e_3))-\widehat{l_3}({F}_0(e_1),{F}_0(e_2),{F}_0(e_3))\\
=&{F}_1(\widehat{l_3}(\sigma_0(x)+u,\sigma_0(y)+v,\sigma_0(z)+w))-\widehat{l_3}({F}_0(\sigma_0(x)+u),{F}_0(\sigma_0(y)+v),{F}_0(\sigma_0(z)+w))\\
=&{F}_1\widehat{l_3}(\sigma_0(x),\sigma_0(y),\sigma_0(z))+\beta_1\widehat{l_3}(\sigma_0(x),\sigma_0(y),w)+\beta_1\widehat{l_3}(\sigma_0(x),v,\sigma_0(z))\\
&~+\beta_1\widehat{l_3}(u,\sigma_0(y),\sigma_0(z))-\widehat{l_3}(\sigma_0\alpha_0(x),\sigma_0\alpha_0(y),\sigma_0\alpha_0(z))-\widehat{l_3}(\sigma_0\alpha_0(x),\sigma_0\alpha_0(y),\lambda_0(z))\\
&~-\widehat{l_3}(\sigma_0\alpha_0(x),\sigma_0\alpha_0(y),\beta_0(w))-\widehat{l_3}(\sigma_0\alpha_0(x),\lambda_0(y),\sigma_0\alpha_0(z))-\widehat{l_3}(\sigma_0\alpha_0(x),\beta_0(v),\sigma_0\alpha_0(z))\\
&~-\widehat{l_3}(\lambda_0(x),\sigma_0\alpha_0(y),\sigma_0\alpha_0(z))-\widehat{l_3}(\beta_0(u),\sigma_0\alpha_0(y),\sigma_0\alpha_0(z))\\
=&\sigma_1\alpha_1l^{\g}_3(x,y,z)+\lambda_1(l^{\g}_3(x,y,z))-\sigma_1l^\g_3(\alpha_0(x),\alpha_0(y),\alpha_0(z))+\beta_1\theta(x,y,z)-\beta_1 r_2(y,z)(u)\\
&~+ l_2(\alpha_0(x),\alpha_0(y))(\lambda_0(z))+ m_2(\alpha_0(x),\alpha_0(z))(\lambda_0(y))+ r_2(\alpha_0(y),\alpha_0(z))(\lambda_0(x))\\
&~+ l_2(\alpha_0(x),\alpha_0(y))(\beta_0(w))-\beta_1 l_2(x,y)(w)+ m_2(\alpha_0(x),\alpha_0(z))(\beta_0(v))-\beta_1 m_2(x,z)(v)\\
&~+ r_2(\alpha_0(y),\alpha_0(z))(\beta_0(u))-\theta(\alpha_0(x),\alpha_0(y),\alpha_0(z)),
\end{align*}
and the right hand side is equal to
\begin{align*}
&{F}_2(e_1, [e_2,e_3])- {F}_2([e_1,e_2],e_3) - {F}_2(e_2,[e_1,e_3])+ [{F}_0(e_1), {F}_2(e_2,e_3)]\\
  & - [{F}_2(e_1,e_2), {F}_0(e_3)] - [{F}_0(e_2),{F}_2(e_1,e_3)]\\
=&{F}_2(\sigma_0(x)+u, [\sigma_0(y)+v,\sigma_0(z)+w])- {F}_2([\sigma_0(x)+u,\sigma_0(y)+v],\sigma_0(z)+w)\\
 &~- {F}_2(\sigma_0(y)+v,[\sigma_0(x)+u,\sigma_0(z)+w])
+ [{F}_0(\sigma_0(x)+u), {F}_2(\sigma_0(y)+v,\sigma_0(z)+w)] \\
&~- [{F}_2(\sigma_0(x)+u,\sigma_0(y)+v), {F}_0(\sigma_0(z)+w)] - [{F}_0(\sigma_0(y)+v), {F}_2(\sigma_0(x)+u,\sigma_0(z)+w)]\\
=&{F}_2(\sigma_0(x)+u, \omega(y,z)+\sigma_0[y,z]+ l_0(y)(w)+ r_0(z)(v))\\
&~-{F}_2(\omega(x,y)+\sigma_0[x,y]+ l_0(x)(v)+ r_0(y)(u), \sigma_0(z)+w)\\
&~-{F}_2(\sigma_0(y)+v, \omega(x,z)+\sigma_0[x,z]+ l_0(x)(w)+ r_0(z)(y))\\
&~+[\lambda_0(x)+\sigma_0\alpha_0(x)+\beta_0(u),\lambda_2(y,z)+\sigma_1\alpha_2(y,z)]\\
&~-[\lambda_2(x,y)+\sigma_1\alpha_2(x,y),\lambda_0(z)+\sigma_0\alpha_0(z)+\beta_0(w)]\\
&~-[\lambda_0(y)+\sigma_0\alpha_0(y)+\beta_0(v),\lambda_2(x,z)+\sigma_1\alpha_2(x,z)]\\
=&\lambda_2(x,[y,z])+\sigma_1\alpha_2([x,y],z)-\lambda_2([x,y],z)-\sigma_1\alpha_2(x,[y,z])-\lambda_2(y,[x,z])\\
&~-\sigma_1\alpha_2(y,[x,z])+ l_0(\alpha_0(x))(\lambda_2(y,z))+ r_1(\alpha_2(y,z))(\lambda_0(x))+ r_1(\alpha_2(y,z))(\beta_0(v))\\
&~+\mu(\alpha_0(x),\alpha_2(y,z))+\si_1[\alpha_0(x),\alpha_2(y,z)]- r_0(\alpha_0(z))(\lambda_2(x,y))- l_1(\alpha_2(x,y))(\lambda_0(z))\\
&~- l_1(\alpha_2(x,y))(\beta_0(w))-\nu(\alpha_2(x,y),\alpha_0(z))-\si_1[\alpha_2(x,y),\alpha_0(z)]- l_0(\alpha_0(y))(\lambda_2(x,z))\\
&~- r_1(\alpha_2(x,z))(\lambda_0(y))- r_1(\alpha_2(x,z))(\beta_0(v))-\mu(\alpha_0(y),\alpha_2(x,z))-\si_1[\alpha_0(y),\alpha_2(x,z)].
\end{align*}
Thus, the two sides are equal if and only if condition (\ref{COC5}) holds.

To derive conditions (\ref{COC6})and (\ref{COC12}). Let $x,y \in \g_0$ and $v \in V_0$. Then

\begin{align*}
 l_0(\alpha_0(x))(v)=&[\sigma_0(\alpha_0(x)),v]=[{F}_0\sigma_0(x)-\lambda_0(x),v]=[{F}_0\sigma_0(x),{F}_0{F}_0^{-1}(v)]\\
=&{F}_0[\si_0(x),{F}_0^{-1}(v)]-\dM_V{F}_2(\si_0(x),{F}_0^{-1}(v))=(\beta_0 l_0(x)\beta_0^{-1})(v).
\end{align*}
And
\begin{align*}
&l_2(\alpha_0(x),\alpha_0(y))(v)+ l_1\alpha_2(x,y)(v)\\
=&-\widehat{l_3}(\si_0\alpha_0(x),\si_0\alpha_0(y),v)+[{F}_2(\si_0(x),\si_0(y))-\lambda_2(x,y),v]\\
=&-\widehat{l_3}({F}_0\sigma_0(x)-\lambda_0(x),{F}_0\sigma_0(y)-\lambda_0(y),v)+[{F}_2(\si_0(x),\si_0(y)),v]\\
=&-\widehat{l_3}({F}_0\sigma_0(x),{F}_0\sigma_0(y),{F}_0{F}_0^{-1}(v))+[{F}_2(\si_0(x),\si_0(y)),{F}_0{F}_0^{-1}(v)]\\
=&-{F}_1(\widehat{l_3}(\sigma_0(x),\sigma_0(y),{F}_0^{-1}(v)))+{F}_2(\si_0(x), [\si_0(y),{F}_0^{-1}(v)])\\
&~- {F}_2([\si_0(x),\si_0(y)],{F}_0^{-1}(v))- {F}_2(\si_0(y),[\si_0(x),{F}_0^{-1}(v)])\\
&~+ [{F}_0\si_0(x), {F}_2(\si_0(y),{F}_0^{-1}(v))] - [{F}_0\si_0(y),{F}_2(\si_0(x),{F}_0^{-1}(v))]\\
=&-\beta_1(\widehat{l_3}(\sigma_0(x),\sigma_0(y),\beta_0^{-1}(v)))\\
=&(\beta_1 l_2(x,y)\beta_0^{-1})(v).
\end{align*}

Similarly, we derive conditions (\ref{COC7})-(\ref{COC11}),  (\ref{COC13}) and (\ref{COC14}).

Conversely, suppose that conditions (\ref{COC1}) -(\ref{COC14}) are given. Then we define ${F}:\hg \to \hg$ by
\begin{align*}
{F}_0(\si_0(x)+v)=&~\si_0\alpha_0(x)+\lambda_0(x)+\beta_0(v).\\
{F}_1(\si_0(a)+m)=&~\si_1\alpha_1(a)+\lambda_1(a)+\beta_1(m).\\
{F}_2(\si_0(x)+u,\si_0(y)+v)=&~\si_0\alpha_2(x,y)+\lambda_2(x,y).
\end{align*}
Since all the maps are linear, it follows that ${F}$ is linear. Clearly, ${F}_0(v)=\beta_0(v)$ for all $v\in V_0$, ${F}_1(m)=\beta_1(m)$ for all $m\in V_1$. Further, we have
$$\overline{{F}}_0(x)=p_0 \big({F}_0 \si_0(x)\big)=p_0\big( \lambda_0(x)+\si_0\alpha_0(x)\big)=p_0\big(\si_0\alpha_0(x)\big)= \alpha_0(x)$$
$$\overline{{F}}_1(a)=p_1 \big({F}_1 \si_1(a)\big)=p_1\big( \lambda_1(a)+\si_1\alpha_1(a)\big)=p_1\big(\si_1\alpha_1(a)\big)= \alpha_1(a)$$
$$\overline{{F}}_2(x,y)=p_1 \big({F}_2 (\si_0(x),\si_0(y))\big)=p_1\big( \lambda_2(x,y)+\si_1\alpha_2(x,y)\big)=p_1\big(\si_1\alpha_2(x,y)\big)= \alpha_2(x,y)$$
for all $x, y \in \fg_0, a \in \fg_1$.

Suppose that ${F}_0(\si_0(x)+v)=0$. This implies that $\si_0\alpha_0(x)=0$. Since $\si_0$ and $\alpha_0$ are both injective, it follows that $x=0$. This further implies that $v=0$, and hence ${F}_0$ is injective. Similarly, ${F}_1$ and ${F}_2$ are also injective.

Let $e_1=\si_0(x)+u$, $e_2=\si_0(y)+v$, $e_3=\si_0(z)+w$ and $h_1=\si_1(a)+m$, for all $x, y, z \in \g_0$, $u,v \in V_0$, $a \in \g_1$ and $ m\in V_1$. We have
\begin{align*}
&{F}_0\widehat{\dM}(h_1)-\widehat{\dM}{F}_1(h_1)\\
=&{F}_0\widehat{\dM}(\sigma_1(a)+m)-\widehat{\dM}{F}_1(\sigma_1(a)+m)\\
=&{F}_0(\psi(a)+\sigma_0\dM_\g(a)+\dM_V(m))-\widehat{\dM}(\lambda_1(a)+\sigma_1(\alpha_1(a))+\beta_1(m))\\
=&\beta_0\psi(a)+\lambda_0(\dM_\g(a))+\beta_0\dM_V(m)+\alpha_0\sigma_0\dM_\g(a)\\
&~-\psi(\alpha_1(a))-\sigma_0\dM_\g\alpha_1(a)-\dM_V(\lambda_1(a))-\dM_V(\beta_1(m))\\
=&~0.\\
\\
&{F}_{0}[e_1,e_2]-[{F}_{0}(e_1),{F}_{0}(e_2)]-\widehat{\dM}{F}_{2}(e_1,e_2)\\
=&{F}_{0}[\sigma_0(x)+u,\sigma_0(y)+v]-[{F}_{0}(\sigma_0(x)+u),{F}_{0}(\sigma_0(y)+v)]-\widehat{\dM}{F}_{2}(\sigma_0(x)+u,\sigma_0(y)+v)\\
=&\beta_0\omega(x,y)+\alpha_0\sigma_0[x,y]+\beta_0 l_0(x)(v)+\beta_0 r_0(y)(u)- l_0(\alpha_0(x))(\lambda_0(y))\\
&~- l_0(\alpha_0(x))(\beta_0(v))- r_0(\alpha_0(y))(\lambda_0(x))- r_0(\alpha_0(y))(\beta_0(u))-\omega(\alpha_0(x),\alpha_0(y))\\
&~-\sigma_0[\alpha_0(x),\alpha_0(y)]-\psi(\alpha_2(x,y))-\sigma_0\dM_\g\alpha_2(x,y)-\dM_V\lambda_2(x,y)\\
=&0.
\end{align*}
Similarly, we have

$${F}_{1}[e_1,h_1]-[{F}_0(e_1),{F}_{1}(h_1)]-{F}_{2}(e_1,\widehat{\dM}(h_1))=0,$$
$${F}_{1}[h_1,e_1]-[{F}_1(h_1),{F}_0(e_1)]-{F}_{2}(\widehat{\dM}(h_1),e_1)=0.$$
Next, we get
\begin{align*}
&{F}_1(\widehat{l_3}(e_1,e_2,e_2))-\widehat{l_3}({F}_0(e_1),{F}_0(e_2),{F}_0(e_3))\\
=&{F}_1\widehat{l_3}(\sigma_0(x),\sigma_0(y),\sigma_0(z))+\beta_1\widehat{l_3}(\sigma_0(x),\sigma_0(y),w)+\beta_1\widehat{l_3}(\sigma_0(x),v,\sigma_0(z))\\
&~+\beta_1\widehat{l_3}(u,\sigma_0(y),\sigma_0(z))-\widehat{l_3}(\sigma_0\alpha_0(x),\sigma_0\alpha_0(y),\sigma_0\alpha_0(z))-\widehat{l_3}(\sigma_0\alpha_0(x),\sigma_0\alpha_0(y),\lambda_0(z))\\
&~-\widehat{l_3}(\sigma_0\alpha_0(x),\sigma_0\alpha_0(y),\beta_0(w))-\widehat{l_3}(\sigma_0\alpha_0(x),\lambda_0(y),\sigma_0\alpha_0(z))-\widehat{l_3}(\sigma_0\alpha_0(x),\beta_0(v),\sigma_0\alpha_0(z))\\
&~-\widehat{l_3}(\lambda_0(x),\sigma_0\alpha_0(y),\sigma_0\alpha_0(z))-\widehat{l_3}(\beta_0(u),\sigma_0\alpha_0(y),\sigma_0\alpha_0(z))\\
=&\sigma_1\alpha_1l^{\g}_3(x,y,z)+\lambda_1(l^{\g}_3(x,y,z))-\sigma_1l^\g_3(\alpha_0(x),\alpha_0(y),\alpha_0(z))+\beta_1\theta(x,y,z)-\theta(\alpha_0(x),\alpha_0(y),\alpha_0(z))\\
&~+ l_2(\alpha_0(x),\alpha_0(y))(\lambda_0(z))+ m_2(\alpha_0(x),\alpha_0(z))(\lambda_0(v))+ r_2(\alpha_0(y),\alpha_0(z))(\lambda_0(x))\\
&~+ l_2(\alpha_0(x),\alpha_0(y))(\beta_0(w))-\beta_1 l_2(x,y)(w)+ m_2(\alpha_0(x),\alpha_0(z))(\beta_0(v))-\beta_1 m_2(x,z)(v)\\
&~+ r_2(\alpha_0(y),\alpha_0(z))(\beta_0(u))-\beta_1 r_2(y,z)(u)\\
=&\lambda_2(x,[y,z])+\sigma_1\alpha_2([x,y],z)-\lambda_2([x,y],z)-\sigma_1\alpha_2(x,[y,z])-\lambda_2(y,[x,z])\\
&~-\sigma_1\alpha_2(y,[x,z])+ l_0(\alpha_0(x))(\lambda_2(y,z))+ r_1(\alpha_2(y,z))(\lambda_0(x))+ r_1(\alpha_2(y,z))(\beta_0(v))\\
&~+\mu(\alpha_0(x),\alpha_2(y,z))+\si_1[\alpha_0(x),\alpha_2(y,z)]- r_0(\alpha_0(z))(\lambda_2(x,y))- l_1(\alpha_2(x,y))(\lambda_0(z))\\
&~- l_1(\alpha_2(x,y))(\beta_0(w))-\nu(\alpha_2(x,y),\alpha_0(z))-\si_1[\alpha_2(x,y),\alpha_0(z)]- l_0(\alpha_0(y))(\lambda_2(x,z))\\
&~- r_1(\alpha_2(x,z))(\lambda_0(y))- r_1(\alpha_2(x,z))(\beta_0(v))-\mu(\alpha_0(y),\alpha_2(x,z))-\si_1[\alpha_0(y),\alpha_2(x,z)]\\
=&{F}_2(\sigma_0(x)+u, \omega(y,z)+\sigma_0[y,z]+ l_0(y)(w)+ r_0(z)(v))\\
&~-{F}_2(\omega(x,y)+\sigma_0[x,y]+ l_0(x)(v)+ r_0(y)(u), \sigma_0(z)+w)\\
&~-{F}_2(\sigma_0(y)+v, \omega(x,z)+\sigma_0[x,z]+ l_0(x)(w)+ r_0(z)(y))\\
&~+[\lambda_0(x)+\sigma_0\alpha_0(x)+\beta_0(u),\lambda_2(y,z)+\sigma_1\alpha_2(y,z)]\\
&~-[\lambda_2(x,y)+\sigma_1\alpha_2(x,y),\lambda_0(z)+\sigma_0\alpha_0(z)+\beta_0(w)]\\
&~-[\lambda_0(y)+\sigma_0\alpha_0(y)+\beta_0(v),\lambda_2(x,z)+\sigma_1\alpha_2(x,z)]\\
=&{F}_2(\sigma_0(x)+u, [\sigma_0(y)+v,\sigma_0(z)+w])- {F}_2([\sigma_0(x)+u,\sigma_0(y)+v],\sigma_0(z)+w)\\
 &~- {F}_2(\sigma_0(y)+v,[\sigma_0(x)+u,\sigma_0(z)+w])
+ [{F}_0(\sigma_0(x)+u), {F}_2(\sigma_0(y)+v,\sigma_0(z)+w)] \\
&~- [{F}_2(\sigma_0(x)+u,\sigma_0(y)+v), {F}_0(\sigma_0(z)+w)] - [{F}_0(\sigma_0(y)+v), {F}_2(\sigma_0(x)+u,\sigma_0(z)+w)]\\
&~-[\lambda_0(y)+\sigma_0\alpha_0(y)+\beta_0(v),\lambda_2(x,z)+\sigma_1\alpha_2(x,z)]\\
=&{F}_2(e_1, [e_2,e_3])- {F}_2([e_1,e_2],e_3) - {F}_2(e_2,[e_1,e_3])+ [{F}_0(e_1), {F}_2(e_2,e_3)] \\
  &- [{F}_2(e_1,e_2), {F}_0(e_3)] - [{F}_0(e_2),{F}_2(e_1,e_3)]
\end{align*}
This proves that ${F} : \widehat{\g} \rightarrow \widehat{\g}$ is a Leibniz 2-algebras homomorphism.

 Let $\sigma(x)+u \in \widehat\g_0,\sigma(a)+m \in \widehat\g_{1}$, Taking $u'=\be_0{^{-1}}\big(u-\lambda_0(\al_0{^{-1}}(x))\big)$,  $m'=\be_1{^{-1}}\big(m-\lambda_1(\al_1{^{-1}}(a))\big)$,
 $x'=\al_0{^{-1}}(x)$, $a'=\al_1{^{-1}}(a)$ yields
\begin{eqnarray*}
F_0\big(\sigma(x')+u'\big) &=&\lambda_0(x')+\sigma(\al_0(x'))+\be_0(u')\\
&=& \lambda_0\big(\al_0^{-1}(x)\big)+\sigma\big(\al_0(\al_0^{-1}(x))\big)
+\be_0\big(\be_0^{-1}\big(u-\lambda_0(\al_0^{-1}(x))\big)\big)\\
&=& \lambda_0\big(\al_0^{-1}(x)\big)+\sigma(x)+u-\lambda_0(\al_0^{-1}(x))\\
&=& \sigma(x)+u,
\end{eqnarray*}
\begin{eqnarray*}
F_1\big(\sigma(a')+m'\big) &=&\lambda_1(a')+\sigma(\al_1(a'))+\be_1(m')\\
&=& \lambda_1\big(\al_1^{-1}(a)\big)+\sigma\big(\al_1(\al_1^{-1}(a))\big)
+\be_1\big(\be_1^{-1}\big(m-\lambda_1(\al_1^{-1}(a))\big)\big)\\
&=& \lambda_1\big(\al_1^{-1}(a)\big)+\sigma(a)+m-\lambda_1(\al_1^{-1}(a))\\
&=& \sigma(a)+m.
\end{eqnarray*}
Hence $F_0,F_1$ is surjective, there by $F_2$ is surjective, then $F=(F_0,F_1,F_2)$ is surjective.
This proves that the pair $(\beta, \alpha) \in \mathrm{Aut} (V) \times \mathrm{Aut} (\mathfrak{g})$ is inducible.
\end{proof}

\begin{rmk}
A pair $(\be,\, \al) \in \Aut(V) \times \Aut(\fg)$ is called compatible if the conditions (\ref{COC6})-(\ref{COC14}) hold.
\end{rmk}

It is easy to see that the set
$$C_\rho(\fg,V)=\{(\beta,\, \alpha) \in \Aut(V) \times \Aut(\fg)|(\beta,\, \alpha) ~\text{satisfies ~  (\ref{COC6})-(\ref{COC14})} \}$$
of all compatible pairs is a subgroup of $\Aut(V) \times \Aut(\fg)$. Conditions (\ref{COC6})-(\ref{COC14}) of the above theorem shows that every inducible pair is compatible.

Next, we show that the obstruction to inducibility of a pair of automorphisms in $\Aut(V) \times \Aut(\fg)$ lies in the second cohomology group $H^2(\fg,V)$.
\begin{lem}
$\Phi$  is a group homomorphism from $\operatorname{Aut}_M(\widehat{\mathfrak{g}}) $ to $ C_\rho(\fg,V) $.
\end{lem}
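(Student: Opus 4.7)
My plan is to break the lemma into two assertions: that $\Phi(\Aut_V(\widehat{\g})) \subseteq C_\rho(\fg,V)$, and that $\Phi$ preserves composition. For the first part, observe that the forward direction of the proof of Theorem \ref{main-thm2}---the series of computations deriving conditions \eqref{COC6}--\eqref{COC14}---only uses the hypothesis that $F$ is a homomorphism of $\widehat\g$ to itself preserving $V$; the specific inducibility of the pair is not invoked at that stage. I would therefore simply refer to those calculations to conclude that $(F|_V,\overline{F})$ satisfies \eqref{COC6}--\eqref{COC14} and hence lies in $C_\rho(\fg,V)$.

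For multiplicativity, I would verify $\Phi(GF)=\Phi(G)\Phi(F)$ component by component. The identity $(GF)|_V=G|_V\circ F|_V$ is immediate from $V$-invariance of $F$ and $G$, since $(GF)_i(v)=G_i(F_i(v))$ for $v\in V_i$, $i=0,1$. To compare $\overline{GF}$ with $\overline{G}\circ\overline{F}$ in degree $0$, write $F_0\sigma_0(x)=\sigma_0\overline{F}_0(x)+\lambda_0^F(x)$, where $\lambda_0^F(x):=F_0\sigma_0(x)-\sigma_0\overline{F}_0(x)$ is annihilated by $p_0$ and therefore lies in $V_0$. Since $G_0$ preserves $V_0$, $p_0 G_0\lambda_0^F(x)=0$, so $\overline{GF}_0(x)=p_0 G_0 F_0 \sigma_0(x)=p_0 G_0\sigma_0\overline{F}_0(x)=\overline{G}_0\overline{F}_0(x)$. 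The same argument with $\sigma_1,p_1,F_1,G_1$ produces $\overline{GF}_1=\overline{G}_1\overline{F}_1$.

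The substantive step is the $2$-component. Using the composition rule $(GF)_2=G_2\circ(F_0\times F_0)+G_1\circ F_2$ recorded after Definition \ref{defi:Lie-2hom}, I would expand
\[
\overline{GF}_2(x,y)=p_1 G_2\bigl(F_0\sigma_0(x),F_0\sigma_0(y)\bigr)+p_1 G_1 F_2\bigl(\sigma_0(x),\sigma_0(y)\bigr),
\]
split $F_0\sigma_0(\cdot)$ and $F_2(\sigma_0(x),\sigma_0(y))=\sigma_1\overline{F}_2(x,y)+\lambda_2^F(x,y)$ (with $\lambda_2^F(x,y)\in V_1$) along their $\sigma$- and $V$-parts, and argue that every cross-term involving a $V$-argument lands in $V_1$ and so is killed by $p_1$. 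What survives assembles into $\overline{G}_2(\overline{F}_0(x),\overline{F}_0(y))+\overline{G}_1\overline{F}_2(x,y)$, which is precisely $(\overline{G}\circ\overline{F})_2(x,y)$.

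The main obstacle will be justifying that $G_2(v,\cdot)$ and $G_2(\cdot,v)$ have image in $V_1$ whenever $v\in V_0$; this follows by applying the homomorphism identity $\widehat{\dM}G_2(v,\alpha)=G_0[v,\alpha]-[G_0(v),G_0(\alpha)]$ together with the facts that $V$ is an abelian ideal of $\widehat{\g}$ and that $G$ preserves $V$, combined with the obvious $G_1(V_1)\subseteq V_1$. Once this bookkeeping is in place, the three component equalities combine to give $\Phi(GF)=\Phi(G)\Phi(F)$, completing the proof.
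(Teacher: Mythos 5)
Your first part (image of $\Phi$ lands in $C_\rho(\fg,V)$) coincides with the paper's own argument: the paper literally repeats the computations from the forward direction of Theorem \ref{main-thm2} that derive \eqref{COC6}--\eqref{COC14}, using only that $F$ is a homomorphism with $p_0(\si_0\alpha_0(x)-F_0\si_0(x))=0$ and its analogues; citing those calculations is fine. Your treatment of multiplicativity is considerably more detailed than the paper's, which essentially asserts $\Phi(FF')=\Phi(F)\Phi(F')$ componentwise without expanding the $2$-component at all. Your degree-$0$ and degree-$1$ arguments are correct.

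The gap is in the one step you yourself flag as the main obstacle. You claim that $G_2(v,\alpha)\in V_1$ for $v\in V_0$ follows from the homomorphism identity $\widehat{\dM}\,G_2(v,\alpha)=G_0[v,\alpha]-[G_0(v),G_0(\alpha)]$. That identity, together with $V$ being an ideal preserved by $G_0$, only shows $\widehat{\dM}\,G_2(v,\alpha)\in V_0=\Ker p_0$, hence $\dM_\g\bigl(p_1G_2(v,\alpha)\bigr)=p_0\widehat{\dM}\,G_2(v,\alpha)=0$. Since $\widehat{\dM}$ need not be injective, this places $p_1G_2(v,\alpha)$ only in $\Ker\dM_\g\subseteq\g_1$, not at $0$, so the cross-terms $p_1G_2(\lambda_0^F(x),\cdot)$ and $p_1G_2(\cdot,\lambda_0^F(y))$ in your expansion of $\overline{GF}_2$ are not killed by this reasoning. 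The membership $G_2(V_0,\widehat{\g}_0)\subseteq V_1$ does not follow from the definition of $\Aut_V(\widehat{\g})$ given in the paper (which constrains only $F_0$ and $F_1$); note that the paper imposes the analogous condition $F_2(i(u),\alpha)=0$ as an \emph{extra hypothesis} in its definition of equivalence of extensions, which is a sign that it is not automatic. So either this condition must be built into $\Aut_V(\widehat{\g})$, or the vanishing must be argued by some other means; as written, your proof of $\overline{GF}_2=(\overline{G}\,\overline{F})_2$ does not close. (The paper's own proof silently skips this point, so you have isolated a real issue, but your proposed justification does not resolve it.)
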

\begin{proof}
First we prove that the image of  $\Phi$ is contained in $ C_\rho(\fg,V)$.
To derive conditions \ref{COC6} and \ref{COC12}, we use the fact that ${F}$ is an isomorphism. Let $x,y \in \g_0$, $a \in \g_1$, $u,v \in V_0$ and $m\in V_1$.
Since
$$p_0(\si_0\alpha_0(x)-{F}_0 \si_0(x))=\alpha_0(x)-p_0{F}_0 \si_0(x))=0,$$
$$p_1(\si_1\alpha_1(a)-{F}_1 \si_1(a))=\alpha_1(a)-p_1{F}_1 \si_1(a))=0,$$
$$p_1(\si_1\alpha_2(x,y)-{F}_2 (\si_0(x),\si_0(y))=\alpha_2(x,y)-p_1{F}_2 (\si_0(x),\si_0(y))=0,$$
then we have
\begin{align*}
&~\big[\si_0\alpha_0(x),v\big]=\big[{F}_0\si_0(x),v\big],\quad\big[\si_1\alpha_2(x,y),v\big]=\big[{F}_2(\si_0(x),\si_0(y)),v\big].\\
&~\big[\si_1\alpha_1(a),m\big]=\big[{F}_1\si_1(a),m\big],\quad\widehat{l_3}(\si_0\alpha_0(x),\si_0\alpha_0(y),v)=\widehat{l_3}({F}_0\sigma_0(x),{F}_0\sigma_0(y),v).
\end{align*}
Thus
\begin{align*}
 l_0(\alpha_0(x))(v)=&[\sigma_0(\alpha_0(x)),v]=[{F}_0\sigma_0(x),v]=[{F}_0\sigma_0(x),{F}_0{F}_0^{-1}(v)]\\
=&{F}_0[\si_0(x),{F}_0^{-1}(v)]-\dM_V{F}_2(\si_0(x),{F}_0^{-1}(v))=(\beta_0 l_0(x)\beta_0^{-1})(v).
\end{align*}
And
\begin{align*}
& l_2(\alpha_0(x),\alpha_0(y))(v)+ l_1\alpha_2(x,y)(v)\\
=&-\widehat{l_3}(\si_0\alpha_0(x),\si_0\alpha_0(y),v)+[{F}_2(\si_0(x),\si_0(y)),v]\\
=&-\widehat{l_3}({F}_0\sigma_0(x),{F}_0\sigma_0(y),v)+[{F}_2(\si_0(x),\si_0(y)),v]\\
=&-\widehat{l_3}({F}_0\sigma_0(x),{F}_0\sigma_0(y),{F}_0{F}_0^{-1}(v))+[{F}_2(\si_0(x),\si_0(y)),{F}_0{F}_0^{-1}(v)]\\
=&-{F}_1(\widehat{l_3}(\sigma_0(x),\sigma_0(y),{F}_0^{-1}(v)))+{F}_2(\si_0(x), [\si_0(y),{F}_0^{-1}(v)])\\
&~- {F}_2([\si_0(x),\si_0(y)],{F}_0^{-1}(v))- {F}_2(\si_0(y),[\si_0(x),{F}_0^{-1}(v)])\\
&~+ [{F}_0\si_0(x), {F}_2(\si_0(y),{F}_0^{-1}(v))] - [{F}_0\si_0(y),{F}_2(\si_0(x),{F}_0^{-1}(v))]\\
=&-\beta_1(\widehat{l_3}(\sigma_0(x),\sigma_0(y),\beta_0^{-1}(v)))\\
=&(\beta_1 l_2(x,y)\beta_0^{-1})(v).
\end{align*}
Similarly, we derive conditions (\ref{COC7})-(\ref{COC11}) and (\ref{COC13})-(\ref{COC14}).

Second we show that $\Phi$ a group homomorphism
$$\Phi({F}{F}')= \Phi({F})\Phi({F}').$$
In fact,
\begin{eqnarray*}
\Phi({F}_0{F}'_0)(v,x)&=&(\beta_0 \beta'_0,\,\alpha_0 \alpha'_0 )(v,x)\\
& = &(\beta_0,\alpha_0)(\beta'_0,\alpha'_0)(v,x)\\
& = &\Phi({F}_0) (\beta'_0,\alpha'_0)(v,x)\\
& = &\Phi({F}_0)\Phi({F}'_0)(v,x).
\end{eqnarray*}
Similarly, we have
\begin{align*}
\Phi({F}_1{F}'_1)(m,a)=&~\Phi({F}_1)\Phi({F}'_1)(m,a).\\
\Phi({F}_1{F}'_2)(x+u,y+v)=&~\Phi({F}'_2)(\Phi({F}_0)(u,x),\Phi({F}_0)(v,y)).
\end{align*}
\end{proof}

\begin{eqnarray*}
&&\text{Let} ~\ker \Phi=\Aut^{V,\fg}(\hg)= \big\{{F} \in \Aut(\hg)~|~ \Phi({F}_0)=(1_{V_0},1_{\fg_0}), ~\Phi({F}_1)=(1_{V_1},1_{\fg_1}),~\Phi({F}_2)=(0,0)\big\}.
\end{eqnarray*}
and
\begin{eqnarray*}
&&Z^1(\fg,V) %& = & \big\{\lambda \in {C}^1(\fg,V)~|~ \partial^1(\lambda)=0 \big\}\\
= \big\{\lambda \in {C}^1(\fg,V)~\big\}
\end{eqnarray*}
let $\lambda=(\lambda_0, \lambda_1, \lambda_2)$ satisfies the following formulas
\begin{align*}
&\dM_\frkh \lambda_1(a)-\lambda_0 \dM_\g (a)=0,\\
& l_0(x)\lambda_0(y)+ r_0(y)\lambda_0(x)-\lambda_0[x,y]_\g+\dM_\frkh\circ \lambda_2(x,y)=0,\\
& l_0(x)\lambda_1(a)+ r_1(a)\lambda_0(x)-\lambda_1[x,a]_\g+\lambda_2(x,\dM_\g (a))=0,\\
& l_1(a)\lambda_1(x)+ r_0(x)\lambda_0(a)-\lambda_1[a,x]_\g+\lambda_2(\dM_\g (a),x)=0,\\
& l_0(x)\lambda_2(y,z)- r_0(z)\lambda_2(x,y)- l_0(y)\lambda_2(x,z)-\lambda_1(l_3^\g(x,y,z)- l_2(x,y)(\lambda_0(z))\\
&- m_2(x,z)(\lambda_0(y))- r_2(y,z)(\lambda_0(x))+\lambda_2(x, [y,z])- \lambda_2([x,y],z) - \lambda_2(y,[x,z])=0.
\end{align*}

\begin{prop}
Let $\mathcal{E}:0 \to V \to\hg \to \fg \to 0$ be an abelian extension of Leibniz 2-algebras. Then ${Z}^1(\fg,V) \cong \Aut^{V,\fg}(\hg)$ as groups.
\end{prop}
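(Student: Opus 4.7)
The plan is to construct an explicit bijection $\Psi\colon Z^1(\fg,V)\longrightarrow \Aut^{V,\fg}(\hg)$ and check that it is a group homomorphism. Identify $\hg$ with $\fg\oplus V$ equipped with the bracket \eqref{eq:bracket} coming from the $2$-cocycle $(\psi,\omega,\mu,\nu,\theta)$ associated with a splitting $\sigma$. For $\lambda=(\lambda_0,\lambda_1,\lambda_2)\in Z^1(\fg,V)$, define $F^\lambda=(F^\lambda_0,F^\lambda_1,F^\lambda_2):\hg\to\hg$ by
\begin{align*}
F^\lambda_0(\sigma_0(x)+v) &= \sigma_0(x)+\lambda_0(x)+v,\\
F^\lambda_1(\sigma_1(a)+m) &= \sigma_1(a)+\lambda_1(a)+m,\\
F^\lambda_2(\sigma_0(x)+u,\sigma_0(y)+v) &= \lambda_2(x,y),
\end{align*}
for $x,y\in\fg_0$, $a\in\fg_1$, $u,v\in V_0$, $m\in V_1$, and set $\Psi(\lambda):=F^\lambda$.

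First I would check that $F^\lambda\in\Aut^{V,\fg}(\hg)$. The homomorphism identities (i)--(m) of Definition \ref{defi:Lie-2hom} for $F^\lambda$ are exactly what the calculation in the proof of Theorem \ref{main-thm2} produces after specialising $\beta=(1_{V_0},1_{V_1})$ and $\alpha=(1_{\fg_0},1_{\fg_1},0)$; in that case equations \eqref{COC1}--\eqref{COC5} collapse to the five relations used to define $Z^1(\fg,V)$, and the compatibility equations \eqref{COC6}--\eqref{COC14} are automatic. Bijectivity is witnessed by the explicit two-sided inverse $F^{-\lambda}$, and by construction $F^\lambda|_V=\id_V$ and $\overline{F^\lambda}=\id_\fg$, so $F^\lambda$ lies in the kernel of $\Phi$.

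Next, using the composition law $(GF)_2=G_2\circ(F_0\times F_0)+G_1\circ F_2$ together with the fact that $F^\lambda_1$ is the identity on $V_1$, a direct computation gives $F^\lambda\circ F^{\lambda'}=F^{\lambda+\lambda'}$, so $\Psi$ is a group homomorphism. Injectivity is immediate since $F^\lambda=\id_{\hg}$ forces each $\lambda_i$ to vanish. For surjectivity, given $F\in\Aut^{V,\fg}(\hg)$ set
\[
\lambda_0(x)=F_0\sigma_0(x)-\sigma_0(x),\quad \lambda_1(a)=F_1\sigma_1(a)-\sigma_1(a),\quad \lambda_2(x,y)=F_2(\sigma_0(x),\sigma_0(y));
\]
because $\overline{F}=\id_\fg$ and $F|_V=\id_V$ these values land in $V_0$, $V_1$, $V_1$ respectively, and the homomorphism axioms for $F$ translate verbatim into the five $1$-cocycle relations on $\lambda$, exactly mirroring the forward direction. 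One then checks $F=F^\lambda=\Psi(\lambda)$.

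The only substantive step is verifying that the homomorphism axioms for $F^\lambda$ collapse precisely to the cocycle identities defining $Z^1(\fg,V)$; this is not a new computation but the specialisation at $\alpha=\id_\fg$, $\beta=\id_V$ of the long calculation already carried out in Theorem \ref{main-thm2}. Once that theorem is cited, the remaining work is essentially bookkeeping and the argument goes through with no further obstacle.
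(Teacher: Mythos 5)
Your construction is exactly the paper's: the same map $\lambda\mapsto F^\lambda$ on cosets of the splitting, the same inverse assignment $\lambda_i$ built from $F\sigma-\sigma$ for surjectivity, and the same verification that the homomorphism axioms reduce to the $1$-cocycle identities. The only difference is stylistic — you obtain the homomorphism conditions by specialising Theorem \ref{main-thm2} at $\beta=\id_V$, $\alpha=\id_\fg$, whereas the paper redoes the computation directly; both are valid and yield the same identities.
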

\begin{proof}
Define $\psi: {Z}^1(\fg,V) \to \Aut(\hg)$ by $\psi(\lambda)= {F}_\lambda$, where ${F}_\lambda=({F}_{\lambda_0},{F}_{\lambda_1},{F}_{\lambda_2}):\hg \to \hg$ is given by
\begin{align*}
{F}_{\lambda_0}\big(\si_0(x)+v\big)=&~\si_0(x)+\lambda_0(x)+v,\\
{F}_{\lambda_1}\big(\si_1(a)+m\big)=&~\si_1(a)+\lambda_1(a)+m,\\
{F}_{\lambda_2}\big(\si_0(x)+u,~\si_0(y)+v\big)=&~\lambda_2(x,y),
\end{align*}
{for all}  $u,v \in V_0$, $x,y \in \fg_0$, $a\in \g_1$ and $m \in V_1$.

Since $\si$ and $\lambda$ are linear maps, it follows that ${F}_\lambda$ is also linear. Let $e_1=\si_0(x)+u$, $e_2=\si_0(y)+v$, $e_3=\si_0(z)+w$ and $h_1=\si_1(a)+m$, for all $x, y, z \in \g_0$, $u,v \in V_0$, $a \in \g_1$ and $ m\in V_1$. Then
\begin{align*}
&{F}_{\lambda_0}\widehat{\dM}(h_1)-\widehat{\dM}{F}_{\lambda_1}(h_1)\\
=&{F}_{\lambda_0}\widehat{\dM}(\sigma_1(a)+m)-\widehat{\dM}{F}_{\lambda_1}(\sigma_1(a)+m)\\
=&{F}_{\lambda_0}(\psi(a)+\sigma_0\dM_\g(a)+\dM_V(m))-\widehat{\dM}(\lambda_1(a)+\sigma_1(a)+m)\\
=&\psi(a)+\lambda_0(\dM_\g(a))+\sigma_0\dM_\g(a)+\dM_V(m)\\
&~-\dM_V(\lambda_1(a))-\dM_V(m))-\psi(a)-\si_0\dM_\g(a)\\
=&~0,\\
\\
&{F}_{\lambda_0}[e_1,e_2]-[{F}_{\lambda_0}(e_1),{F}_{\lambda_0}(e_2)]-\widehat{\dM}{F}_{\lambda_2}(e_1,e_2)\\
=&{F}_{\lambda_0}[\sigma_0(x)+u,\sigma_0(y)+v]-[{F}_{\lambda_0}(\sigma_0(x)+u),{F}_{\lambda_0}(\sigma_0(y)+v)]-\widehat{\dM}{F}_{\lambda_2}(\sigma_0(x)+u,\sigma_0(y)+v)\\
=&\si_0[x,y]+\omega(x,y)+\lambda_0[x,y]+ l_0(x)(v)+ r_0(y)(u)-\si_0[x,y]-\omega(x,y)- l_0(x)(v)\\
&~- r_0(y)(u)- l_0(x)(\lambda_0(y))- r_0(y)(\lambda_0(x))-\dM_V\lambda_2(x,y)\\
=&0.
\end{align*}
Similarly, we have

$${F}_{\lambda_1}[e_1,h_1]-[{F}_{\lambda_0}(e_1),{F}_{\lambda_1}(h_1)]-{F}_{\lambda_2}(e_1,\widehat{\dM}(h_1))=0,$$
$${F}_{\lambda_1}[h_1,e_1]-[{F}_{\lambda_1}(h_1),{F}_{\lambda_0}(e_1)]-{F}_{\lambda_2}(\widehat{\dM}(h_1),e_1)=0.$$
Next, we have
\begin{align*}
&{F}_{\lambda_1}(\widehat{l_3}(e_1,e_2,e_2))-\widehat{l_3}({F}_{\lambda_0}(e_1),{F}_{\lambda_0}(e_2),{F}_{\lambda_0}(e_3)\\
=&{F}_{\lambda_1}\widehat{l_3}(\sigma_0(x),\sigma_0(y),\sigma_0(z))+\widehat{l_3}(\sigma_0(x),\sigma_0(y),w)+\widehat{l_3}(\sigma_0(x),v,\sigma_0(z))\\
&~+\widehat{l_3}(u,\sigma_0(y),\sigma_0(z))-\widehat{l_3}(\sigma_0(x),\sigma_0(y),\sigma_0(z))-\widehat{l_3}(\sigma_0(x),\sigma_0(y),\lambda_0(z))\\
&~-\widehat{l_3}(\sigma_0(x),\sigma_0(y),w)-\widehat{l_3}(\sigma_0(x),\lambda_0(y),\sigma_0(z))-\widehat{l_3}(\sigma_0(x),v,\sigma_0(z))\\
&~-\widehat{l_3}(\lambda_0(x),\sigma_0(y),\sigma_0(z))-\widehat{l_3}(u,\sigma_0(y),\sigma_0(x))\\
=&\sigma_1l^{\g}_3(x,y,x)+\lambda_0(l^{\g}_3(x,y,x))-\sigma_1l^\g_3(x,y,z)+\theta(x,y,z)-\theta(x,y,z)\\
&~+ l_2(x,y)(\lambda_0(z))+ m_2(x,z)(\lambda_0(y))+ r_2(y,z)(\lambda_0(x))\\
&~+ l_2(x,y)(w)- l_2(x,y)(w)+ m_2(x,z)(v)- m_2(x,z)(v)\\
&~+ r_2(y,z)(u)- r_2(y,z)(u)\\
=&{F}_{\lambda_2}(\sigma_0(x)+u, \omega(y,z)+\sigma_0[y,z]+ l_0(y)(w)+ r_0(z)(v))\\
&~-{F}_{\lambda_2}(\omega(x,y)+\sigma_0[x,y]+ l_0(x)(v)+ r_0(y)(u), \sigma_0(z)+w)\\
&~-{F}_{\lambda_2}(\sigma_0(y)+v, \omega(x,z)+\sigma_0[x,z]+ l_0(x)(w)+ r_0(z)(y))\\
&~+[\lambda_0(x)+\sigma_0(x)+u,\lambda_2(y,z)]-[\lambda_2(x,y),\lambda_0(z)+\sigma_0(z)+w]\\
&~-[\lambda_0(y)+\sigma_0(y)+v,\lambda_2(x,z)]\\
=&{F}_{\lambda_2}(\sigma_0(x)+u, [\sigma_0(y)+v,\sigma_0(z)+w])- {F}_{\lambda_2}([\sigma_0(x)+u,\sigma_0(y)+v],\sigma_0(z)+w)\\
 &~- {F}_{\lambda_2}(\sigma_0(y)+v,[\sigma_0(x)+u,\sigma_0(z)+w])
+ [{F}_{\lambda_0}(\sigma_0(x)+u), {F}_{\lambda_2}(\sigma_0(y)+v,\sigma_0(z)+w)] \\
&~- [{F}_{\lambda_2}(\sigma_0(x)+u,\sigma_0(y)+v), {F}_{\lambda_0}(\sigma_0(z)+w)] - [{F}_{\lambda_0}(\sigma_0(y)+v), {F}_{\lambda_2}(\sigma_0(x)+u,\sigma_0(z)+w)]\\
=&{F}_{\lambda_2}(e_1, [e_2,e_3])- {F}_{\lambda_2}([e_1,e_2],e_3) - {F}_{\lambda_2}(e_2,[e_1,e_3])+ [{F}_{\lambda_0}(e_1), {F}_{\lambda_2}(e_2,e_3)] \\
  &- [{F}_{\lambda_2}(e_1,e_2), {F}_{\lambda_0}(e_3)] - [{F}_{\lambda_0}(e_2),{F}_{\lambda_2}(e_1,e_3)].
\end{align*}
Thus ${F}_\lambda$ is a Leibniz 2-algebras homomorphism of $\hg$. Since $\si=(\si_0,\si_1)$ is injective,
$${F}_{\lambda_0}(\si_0(x)+v)=0, ~~\text{implies that}~~ v=0 ~~\text{and} ~~x=0.$$
$${F}_{\lambda_1}(\si_1(a)+m)=0, ~~\text{implies that}~~ m=0 ~~ \text{and} ~~ a=0.$$
Finally, if $\si_0(x)+v,~ \si_0(y)+v \in \hg_0$, and $\si_1(a)+m \in \hg_1$ then
$${F}_{\lambda_0}(v-\lambda_0(x)+\si_0(x))= \si_0(x)+v.$$
$${F}_{\lambda_1}(m-\lambda_1(a)+\si_1(a))= \si_1(a)+m.$$
Hence ${F}_\lambda$ is a Leibniz 2-algebras automorphism of $\hg$. Clearly, $\Phi ({F}_{\lambda_0})=(1_{V_0},1_{\g_0})$, $\Phi ({F}_{\lambda_1})=(1_{V_1},1_{\g_1})$, $\Phi ({F}_{\lambda_2})=(0,0)$,and hence $\psi({F})={F}_\lambda \in \Aut^{V,\fg}(\hg)$.

Let $\lambda=(\lambda_0,\lambda_1,\lambda_2), \lambda'=(\lambda'_0 ,\lambda'_1 ,\lambda'_1) \in {Z}^1(\fg,V)$. Then
\begin{eqnarray*}
{F}_{\lambda_0+\lambda'_0}\big(\si_0(x)+v\big) & = &\si_0(x)+  \lambda_0(x)+\lambda'_0(x)+v\\
& = & {F}_{\lambda'_0}\big(\si_0(x)+\lambda_0(x)+v\big)\\
& = & {F}_{\lambda'_0}\big({F}_{\lambda_0}(\si_0(x)+v)\big).
\end{eqnarray*}
Similarly, we have
\begin{align*}
{F}_{\lambda_1+\lambda'_1}\big(\si_1(a)+m\big)=&~{F}_{\lambda'_1}\big({F}_{\lambda_1}(\si_1(a)+m)\big),\\
{F}_{\lambda_1+\lambda'_2}\big(\si_0(x)+u,\si_0(y)+v\big)=&~{F}_{\lambda'_2}\big({F}_{\lambda_0}(\si_0(x)+u),{F}_{\lambda_0}(\si_0(y)+v)\big).
\end{align*}

Thus $\psi$ is a group homomorphism. It is easy to see that $\psi$ is injective. Finally, we show that $\psi$ is surjective onto $\Aut^{V,\fg}(\hg)$. Let ${F} \in \Aut^{V,\fg}(\hg)$. Since $\overline{{F}}=(1_{\g_0},1_{\g_1},0)$, we have
 $$p_0 \big({F}_0 \si_0(x)\big)=x=p_0\big(\si_0(x)\big),~p_1\big({F}_1 \si_0(a)\big)=a=p_1\big(\si_1(a)\big),$$
 $$p_1\big({F}_2(\si_0(x),\si_0(y)\big)=0~~\text{for all}~ x,y \in \fg_0,~a \in \g_1.$$
 This implies that
 $${F}_0\big(\si_0(x)\big)= \lambda_0(x)+\si_0(x),~{F}_1\big(\si_1(a)\big)= \lambda_1(a)+\si_1(a),~{F}_2\big(\si_0(x),\si_0(y)\big)= \lambda_2(x,y)$$
 for some element $\lambda_0(x),\lambda_1(a),\lambda_2(x,y) \in V$. We claim that $\lambda \in {Z}^1(\fg,V)$. Since ${F}$ and $\si$ are linear maps, it follows that $\lambda:\fg \to V$ is also linear. Since ${F}$ is a Leibniz 2-algebras homomorphism. But
\begin{eqnarray*}
{F}_0\widehat{\dM}(\si_1(a))  =  \lambda_0(\dM_{\g}(a))+\si_0(\dM_{\g}(a))+\psi(a).\\
\end{eqnarray*}
On the other hand, we have

\begin{eqnarray*}
\widehat{\dM}{F}_1(\si_1(a))  =\dM_V\lambda_1(a)+\si_0(\dM_{\g}(a))+\psi(a).\\
\end{eqnarray*}
Equating these two expressions, we obtain
$$\dM_\frkh \lambda_1(a)-\lambda_0 \dM_\g (a)=0.$$

Then, we have
\begin{align*}
&{F}_{0}[\si_0(x),\si_0(y)]_\g-[{F}_{0}(\si_0(x)),{F}_{0}(\si_0(y))]\\
=&{F}_{0}(\omega(x,y)+\si_0[x,y])-[\lambda_0(x)+\si_0(x),\lambda_0(y)+\si_0(y)]\\
=&\omega(x,y)+\si_0[x,y]+\lambda_0[x,y]- l_0(x)(\lambda_0(y))\\
&- r_0(y)(\lambda_0(x))-\omega(x,y)-\si_0[x,y].
\end{align*}
On the other hand, we have
\begin{align*}
\widehat{\dM}{F}_{2}(\si_0(x),\si_0(y))=\widehat{\dM}\lambda_2(x,y)=\dM_V\lambda_2(x,y).
\end{align*}
Equating these two expressions, we obtain
$$ l_0(x)\lambda_0(y)+ r_0(y)\lambda_0(x)-\lambda_0[x,y]_\g+\dM_\frkh\circ \lambda_2(x,y)=0.$$

Similarly, we obtain
\begin{align*}
& l_0(x)\lambda_1(a)+ r_1(a)\lambda_0(x)-\lambda_1[x,a]_\g+\lambda_2(x,\dM_\g (a))=0,\\
& l_1(a)\lambda_1(x)+ r_0(x)\lambda_0(a)-\lambda_1[a,x]_\g+\lambda_2(\dM_\g (a),x)=0.
\end{align*}

Next, we get
\begin{align*}
&{F}_1(\widehat{l_3}(\si_0(x),\si_0(y),\si_0(z)))-\widehat{l_3}({F}_0(\si_0(x)),{F}_0(\si_0(y)),{F}_0(\si_0(z)))\\
=&{F}_1(\si_1 l_3(x,y,x)+\theta(x,y,z))-\widehat{l_3}(\lambda_0(x)+\si_0(x),\lambda_0(y)+\si_0(y),\lambda_0(z)+\si_0(z))\\
=&\si_1 l_3(x,y,x)+\theta(x,y,z)+\lambda_0(l_3(x,y,x))-\si_1 l_3(x,y,x)-\theta(x,y,z)\\
&+ l_2(x,y)(\lambda_0(z))+ r_2(y,z)(\lambda_0(x))+ m_2(x,z)(\lambda_0(y)).
\end{align*}
On the other hand, we have
\begin{align*}
&{F}_2(\si_0(x), [\si_0(y),\si_0(z)])- {F}_2([\si_0(x),\si_0(y)],\si_0(z))- {F}_2(\si_0(y),[\si_0(x),\si_0(z)])\\
 &+ [{F}_0(\si_0(x)), {F}_2(\si_0(y),\si_0(z))] - [{F}_2(\si_0(x),\si_0(y)), {F}_0(\si_0(z))]- [{F}_0(\si_0(y)),{F}_2(\si_0(x),\si_0(z))]\\
=&\lambda_2(x,[y,z])-\lambda_2([x,y],z)-\lambda_2(y,[x,z])+ l_0(x)(\lambda_2(y,z))- r_0(z)(\lambda_2(x,y))- l_0(y)(\lambda_2(x,z))
\end{align*}
Equating these two expressions, we obtain
\begin{align*}
& l_0(x)\lambda_2(y,z)- r_0(z)\lambda_2(x,y)- l_0(y)\lambda_2(x,z)-\lambda_1(l_3^\g(x,y,z)- l_2(x,y)(\lambda_0(z))\\
&- m_2(x,z)(\lambda_0(y))- r_2(y,z)(\lambda_0(x))+\lambda_2(x, [y,z])- \lambda_2([x,y],z) - \lambda_2(y,[x,z])=0.
\end{align*}
Hence $\lambda=(\lambda_0,\lambda_1,\lambda_2) \in {Z}^1(\fg,V)$. This completes the proof of the proposition.
\end{proof}
For each $(\beta, \alpha) \in \C_\rho(\fg,V)$, we define
$$\Psi: C_\rho(\fg,V)\to \End (Z^2(\fg,V))$$
by
\begin{align*}
\Psi(\beta,\alpha)\psi(a)=\psi_{(\beta, \alpha)}(a)=&~\beta_0\psi(\alpha^{-1}_0(a)),\\
\Psi(\beta,\alpha)\omega(x,y)=\omega_{(\beta, \alpha)}(x,y)=&~\beta_0\omega(\alpha^{-1}_0(x),\alpha^{-1}_0(y)),\\
\Psi(\beta,\alpha)\mu(x,a)=\mu_{(\beta, \alpha)}(x,a)=&~\beta_1\mu(\alpha^{-1}_0(x),\alpha^{-1}_1(a)),\\
\Psi(\beta,\alpha)\nu(a,x)=\nu_{(\beta, \alpha)}(a,x)=&~\beta_0\nu(\alpha^{-1}_1(a),\alpha^{-1}_0(x)),\\
\Psi(\beta,\alpha)\theta(x,y,z)=\theta_{(\beta, \alpha)}(x,y,z)=&~\beta_0\theta(\alpha^{-1}_0(x),\alpha^{-1}_0(y),\alpha^{-1}_0(z)).
\end{align*}
It can be easily seen that $(\psi,\omega,\mu,\nu,\theta)_{(\psi, \phi)}$ is a 2-cocycle if $(\psi,\omega,\mu,\nu,\theta)$ is a 2-cocycle. Since $(\psi,\omega,\mu,\nu,\theta)$ is the 2-cocycle corresponding to the extension $\mathcal{E}$, this defines a map
$$\varpi_{\mathcal{E}}: C_\rho(\fg,V) \to H^2(\fg,V)$$
given by
$$\varpi_{\mathcal{E}} (\beta, \alpha)= [(\psi,\omega,\mu,\nu,\theta)_{(\beta, \alpha)}]-[(\psi,\omega,\mu,\nu,\theta)],$$
where $[(\psi,\omega,\mu,\nu,\theta)_{(\beta, \alpha)}]$ is the cohomology class of $(\psi,\omega,\mu,\nu,\theta)_{(\beta, \alpha)}$.

This $\varpi_{\mathcal{E}}$ is called the {\bf Wells map}.  We show that there is a left action of $C_\rho(\fg,V)$ on $H^2(\fg,V)$ with respect to which $\varpi_{\mathcal{E}}$ is an inner derivation. Hence $\varpi_{\mathcal{E}}=0$ if and only if this action of $C_\rho(\fg,V)$ on $H^2(\fg,V)$ is trivial.

Let $(\beta,\alpha) \in C_\rho(\fg,V)$ and $(\psi,\omega,\mu,\nu,\theta) \in \Z^2(\fg,V)$. For $x,y \in \fg_0,~ a\in \g_1$, define
\begin{align*}
{}^{(\beta, \alpha)}\psi(a)=&~\beta_0\psi(\alpha^{-1}_0(a)),\\
{}^{(\beta, \alpha)}\omega(x,y)=&~\beta_0\omega(\alpha^{-1}_0(x),\alpha^{-1}_0(y)),\\
{}^{(\beta, \alpha)}\mu(x,a)=&~\beta_1\mu(\alpha^{-1}_0(x),\alpha^{-1}_1(a)),\\
{}^{(\beta, \alpha)}\nu(a,x)=&~\beta_0\nu(\alpha^{-1}_1(a),\alpha^{-1}_0(x)),\\
{}^{(\beta, \alpha)}\theta(x,y,z)=&~\beta_0\theta(\alpha^{-1}_0(x),\alpha^{-1}_0(y),\alpha^{-1}_0(z)).
\end{align*}
The compatibility of $(\beta,\alpha)$ implies that ${}^{(\beta,\alpha)}(\psi,\omega,\mu,\nu,\theta) \in \Z^2(\fg,V)$. Additionally, if $(\psi,\omega,\mu,\nu,\theta) \in B^2(\fg,V)$, then there exists some $\lambda=(\lambda_0,\lambda_1,\lambda_2) \in {C}^1(\fg,V)$ such that $(\psi,\omega,\mu,\nu,\theta)= \partial^1(\lambda)$. Given the compatibility of $(\beta,\alpha)$, it follows that ${}^{(\beta,\alpha)}(\psi,\omega,\mu,\nu,\theta)= \partial^1(\beta \lambda \alpha^{-1})$, where $\beta \lambda \alpha^{-1} \in {C}^1(\fg,V)$. Consequently, $[{}^{(\beta,\alpha)}(\psi,\omega,\mu,\nu,\theta)] \in H^2(\fg,V)$. This clearly defines a left action of $C_\rho(\fg,V)$ on $H^2(\fg,V)$, which is represented by
$$^{(\beta,\alpha)}[(\psi,\omega,\mu,\nu,\theta)]= [{}^{(\beta,\alpha)}(\psi,\omega,\mu,\nu,\theta)].$$

\begin{prop}
Let $\mathcal{E}$ be an extension inducing $\rho$. Then $\varpi_{\mathcal{E}}$ is an inner derivation with respect to the action of $C_\rho(\fg,V)$ on $H^2(\fg,V)$.
\end{prop}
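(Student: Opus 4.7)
The plan is to exhibit an explicit element of $H^2(\fg,V)$ whose ``orbit difference'' under the action of $C_\rho(\fg,V)$ reproduces $\varpi_{\mathcal{E}}$ on the nose. The natural candidate is the class $c_{\mathcal{E}} := [(\psi,\omega,\mu,\nu,\theta)] \in H^2(\fg,V)$ of the $2$-cocycle attached to $\mathcal{E}$ via Theorem \ref{thm:2-cocylce}. Comparing the formula defining $\varpi_{\mathcal{E}}$ with the definition of the action ${}^{(\beta,\alpha)}[\,\cdot\,]$ given just before the statement, one reads off
\[
\varpi_{\mathcal{E}}(\beta,\alpha) \;=\; {}^{(\beta,\alpha)}c_{\mathcal{E}} - c_{\mathcal{E}}
\]
for every $(\beta,\alpha)\in C_\rho(\fg,V)$, since both sides equal $[(\psi,\omega,\mu,\nu,\theta)_{(\beta,\alpha)}]-[(\psi,\omega,\mu,\nu,\theta)]$. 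So the inner-derivation witness is essentially handed to us by the definitions.

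What then remains is to verify the derivation identity
\[
\varpi_{\mathcal{E}}\big((\beta_1,\alpha_1)(\beta_2,\alpha_2)\big) \;=\; {}^{(\beta_1,\alpha_1)}\varpi_{\mathcal{E}}(\beta_2,\alpha_2) + \varpi_{\mathcal{E}}(\beta_1,\alpha_1),
\]
so that $\varpi_{\mathcal{E}}$ is genuinely a derivation rather than merely a set-theoretic map. Given the explicit form above, this is a purely group-theoretic step: for any left action of a group $G$ on an abelian group $A$ and any fixed $a\in A$, the map $g\mapsto g\cdot a-a$ satisfies $(g_1 g_2)\cdot a - a = g_1\cdot(g_2\cdot a - a) + (g_1\cdot a - a)$. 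Applying this with $G=C_\rho(\fg,V)$, $A=H^2(\fg,V)$ and $a=c_{\mathcal{E}}$ gives the desired identity and shows that $\varpi_{\mathcal{E}}$ is by construction a principal (inner) derivation.

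The one place where genuine content is needed, and hence the main obstacle in a fully written-out proof, is confirming that the componentwise rule $(\psi,\omega,\mu,\nu,\theta)\mapsto {}^{(\beta,\alpha)}(\psi,\omega,\mu,\nu,\theta)$ really defines a left action of $C_\rho(\fg,V)$ on $H^2(\fg,V)$. Concretely, one must check that ${}^{(\beta,\alpha)}(\psi,\omega,\mu,\nu,\theta)$ satisfies the $2$-cocycle equations \eqref{eq:coc01}--\eqref{eq:coc08} (this is where the compatibility conditions \eqref{COC6}--\eqref{COC14} intervene in an essential way), that coboundaries are sent to coboundaries, and that composition of pairs $(\beta,\alpha)$ corresponds to composition of the componentwise actions. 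All three points were flagged in the paragraph preceding the statement and can be cited rather than rederived. Once these action axioms are in hand, the proposition follows immediately from the displayed identity: $\varpi_{\mathcal{E}}$ is the principal derivation attached to $c_{\mathcal{E}}$.
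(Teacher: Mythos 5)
Your proposal is correct and follows essentially the same route as the paper: both identify $\varpi_{\mathcal{E}}(\beta,\alpha)$ with ${}^{(\beta,\alpha)}[(\psi,\omega,\mu,\nu,\theta)]-[(\psi,\omega,\mu,\nu,\theta)]$, i.e.\ the principal derivation attached to the class of the extension cocycle. You go slightly further than the paper by explicitly checking the derivation identity $(g_1g_2)\cdot a-a=g_1\cdot(g_2\cdot a-a)+(g_1\cdot a-a)$, which the paper leaves implicit; this is a welcome but minor addition.
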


\begin{proof}
Let $\mathcal{E}$ be an extension inducing $\rho$ and $\varpi_{\mathcal{E}}: C_\rho(\fg,V) \to H^2(\fg,V)$ be the Wells map. Then for $(\beta, \alpha)$ in $C_\rho(\fg,V)$, we have

\begin{eqnarray*}
(\psi_{(\beta, \alpha)}-\psi)(a)&=&\beta_0\psi(\alpha^{-1}_0(a))-\psi(a)\\
&=&{}^{(\beta, \alpha)}\psi(a)-\psi(a),\\
(\omega_{(\psi, \phi)}-\omega)(x, y) & = & \beta_0 \big( \omega(\alpha^{-1}_0(x), \alpha^{-1}_0(y)) \big)-\omega(x, y)\\
& = & {}^{(\beta,\alpha)}\omega(x, y)-\omega(x, y),\\
(\mu_{(\beta, \alpha)}-\mu)(x,a)& = & \beta_1\mu(\alpha^{-1}_0(x),\alpha^{-1}_1(a))-\mu(x,a)\\
&=&{}^{(\beta, \alpha)}\mu(x,a)-\mu(x,a),\\
(\nu_{(\beta, \alpha)}-\nu)(a,x)&=&\beta_1\nu(\alpha^{-1}_1(a),\alpha^{-1}_0(x))-\nu(a,x)\\
&=&{}^{(\beta, \alpha)}\nu(a,x)-\nu(a,x),\\
(\theta_{(\beta, \alpha)}-\theta)(x,y,z)&=&\beta_1\theta(\alpha^{-1}_0(x),\alpha^{-1}_0(y),\alpha^{-1}_0(z))\\
&=&{}^{(\beta, \alpha)}\theta(x,y,z)-\theta(x,y,z).
\end{eqnarray*}
This implies $\varpi_{\mathcal{E}} (\beta, \alpha)= [(\psi,\omega,\mu,\nu,\theta)_{(\beta, \alpha)}-(\psi,\omega,\mu,\nu,\theta)]= ^{(\beta, \alpha)}[(\psi,\omega,\mu,\nu,\theta)]-[(\psi,\omega,\mu,\nu,\theta)]$. Hence $\varpi_{\mathcal{E}}$ is an inner derivation with respect to the action of $C_\rho(\fg,V)$ on $H^2(\fg,V)$.
\end{proof}

\begin{lem}\label{keylemma2}
Let $(\beta, \alpha) \in C_\rho(\fg,V)$. Then $(\beta, \alpha)$ is inducible  if and only if $(\beta, \alpha)$ is in the kernel of Wells map $\ker \varpi_{\mathcal{E}}$.
\end{lem}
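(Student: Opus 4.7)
The plan is to derive the lemma directly from Theorem \ref{main-thm2} by identifying the existence of a linear map $\lambda=(\lambda_0,\lambda_1,\lambda_2)$ satisfying (\ref{COC1})--(\ref{COC5}) with the coboundary condition that witnesses the vanishing of $\varpi_{\mathcal{E}}(\beta,\alpha)$ in $H^2(\g,V)$. The compatibility conditions (\ref{COC6})--(\ref{COC14}) are already built into the hypothesis $(\beta,\alpha)\in C_\rho(\g,V)$, so only the first five equations carry any information beyond compatibility and must be interpreted cohomologically.

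For the forward direction, suppose $(\beta,\alpha)$ is inducible. Theorem \ref{main-thm2} then furnishes $\lambda\in \Hom(\g,V)$ satisfying (\ref{COC1})--(\ref{COC5}). I would substitute $a\mapsto \alpha_1^{-1}(a)$ in (\ref{COC1}) and $(x,y,z)\mapsto(\alpha_0^{-1}(x),\alpha_0^{-1}(y),\alpha_0^{-1}(z))$ in (\ref{COC2})--(\ref{COC5}), and invoke the homomorphism identities $\alpha_0\dM_\g=\dM_\g \alpha_1$ and $\dM_\g \alpha_2(x,y)=\alpha_0[x,y]_\g-[\alpha_0(x),\alpha_0(y)]_\g$, in order to rewrite each equation in the form
$$(\psi,\omega,\mu,\nu,\theta)_{(\beta,\alpha)}-(\psi,\omega,\mu,\nu,\theta)=D_1(\phi,\varphi,\chi),$$
with $(\phi,\varphi,\chi):=(\lambda_0\alpha_0^{-1},\,\lambda_1\alpha_1^{-1},\,\lambda_2\circ(\alpha_0^{-1}\times\alpha_0^{-1}))$. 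This displays the difference of 2-cocycles as an explicit 2-coboundary, and hence $\varpi_{\mathcal{E}}(\beta,\alpha)=0$.

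Conversely, assume $\varpi_{\mathcal{E}}(\beta,\alpha)=0$ and choose a 1-cochain $(\phi,\varphi,\chi)\in C^1(\g,V)$ with $(\psi,\omega,\mu,\nu,\theta)_{(\beta,\alpha)}-(\psi,\omega,\mu,\nu,\theta)=D_1(\phi,\varphi,\chi)$. Reversing the previous change of variables by setting $\lambda_0:=\phi\circ\alpha_0$, $\lambda_1:=\varphi\circ\alpha_1$ and $\lambda_2:=\chi\circ(\alpha_0\times\alpha_0)$ yields $\lambda=(\lambda_0,\lambda_1,\lambda_2)$, which a componentwise check shows satisfies (\ref{COC1})--(\ref{COC5}). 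Together with the standing compatibility (\ref{COC6})--(\ref{COC14}), this is the full hypothesis of Theorem \ref{main-thm2}, so $(\beta,\alpha)$ is inducible.

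The main technical obstacle is the componentwise bookkeeping identifying each of (\ref{COC1})--(\ref{COC5}) with the corresponding component of $D_1(\phi,\varphi,\chi)$ given by (\ref{eq:1-cocycle1})--(\ref{eq:1-cocycle32}) and the unlabelled $\theta$-formula that follows them. The $\omega$-, $\mu$- and $\nu$-equations carry extra $\psi(\alpha_2(\cdot,\cdot))$-type contributions arising from the non-strictness of $\alpha$, and these must be absorbed using (\ref{COC1}) in combination with the homomorphism identity for $\alpha_2$. The hardest step is the $\theta$-component (\ref{COC5}), which is laden with Jacobiator-style terms in $l_2,m_2,r_2$ and $\alpha_2$; here the compatibility identities (\ref{COC12})--(\ref{COC14}) are indispensable for converting conjugated expressions such as $\beta_1 l_2(x,y)\beta_0^{-1}$ into the form $l_2(\alpha_0(x),\alpha_0(y))+l_1\alpha_2(x,y)$ required to match the target coboundary.
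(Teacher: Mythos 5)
Your proposal is correct and, for the forward direction, follows essentially the same route as the paper: define $\lambda$ from the inducing automorphism via Theorem \ref{main-thm2}, substitute $\alpha^{-1}$ into (\ref{COC1})--(\ref{COC5}), and recognize the result as $(\psi,\omega,\mu,\nu,\theta)_{(\beta,\alpha)}-(\psi,\omega,\mu,\nu,\theta)=D_1(\lambda\alpha^{-1})$. The one substantive difference is that the paper's written proof stops after the forward implication, whereas you also supply the converse by reversing the change of variables and feeding the resulting $\lambda$ back into Theorem \ref{main-thm2}; this is a genuine improvement, since the lemma is stated as an equivalence and the converse is what is actually needed to conclude $\operatorname{im}\Phi=\ker\varpi_{\mathcal{E}}$ and exactness at $C_\rho(\fg,V)$. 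Your observation that the $\psi(\alpha_2(\cdot,\cdot))$-type terms must be absorbed using (\ref{COC1}) together with the identity $\dM_\g\alpha_2(x,y)=\alpha_0[x,y]_\g-[\alpha_0(x),\alpha_0(y)]_\g$ is exactly the bookkeeping the paper glosses over, and the ordering you implicitly use (establish the clean $\psi$-component first, then use it for the $\omega$-, $\mu$-, $\nu$- and $\theta$-components) makes the converse direction go through without circularity.
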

\begin{proof}
Suppose $(\beta, \alpha)$ is inducible, then there exists a ${F}=({F}_0,{F}_1,{F}_2) \in \Aut_V(\hg)$ such that $\Phi({F})=(\beta, \alpha)$. Let $\si=(\si_0,\si_1): \fg \to \hg$ be sections. Then any elements of $\hg$ can be written uniquely as $\si_0(x)+v$, $\si_1(a)+m$ for some $v\in V_0$, $m\in V_1$ and $x \in \fg_0$, $a\in \g_1$. Now,
$$\alpha_0(x)=p_0 {F}_0 \si_0(x),~\alpha_1(a)=p_1 {F}_1 \si_1(a),~\alpha_2(x,y)=p_1 {F}_2 (\si_0(x),\si_0(y)).$$
 This implies
 $$p_0 \si_0 \alpha_0(x)=p_0 {F}_0 \si_0(x),~p_1 \si_1\alpha_1(a)=p_1 {F}_1 \si_1(a),~p_1\si_1\alpha_2(x,y)=p_1 {F}_2 (\si_0(x),\si_0(y)).$$
 And hence we obtain a linear map $\lambda \in \Hom(\fg,V)$ defined by
\begin{align*}
\lambda_0(x)=&~{F}_0 \sigma_0(x)-\sigma_0\alpha_0(x),\\
\lambda_1(a)=&~{F}_1 \sigma_1(a)-\sigma_1\alpha_1(a),\\
\lambda_2(x,y)=&~{F}_2(\sigma_0(x),\sigma_0(y))-\sigma_1\alpha_2(x,y).
\end{align*}
for some $\lambda_0(x) \in V_0$, $\lambda_1(x) \in V_1$,  $\lambda_2(x,y) \in V_1$.
By the main result of last section, we get  conditions (\ref{COC1}) - (\ref{COC5}). Replacing $\al(x_i)$ by $x_i$, we get
\begin{align*}
&\beta_0\psi (\alpha^{-1}_1(a)) - \psi (a) = \dM_{V}(\lambda_1(\alpha^{-1}_1(a)))-\lambda_0 (\dM_{\g}(\alpha^{-1}_1(a))),\\
\\
&\beta_0 \omega (\alpha^{-1}_0(x), \alpha^{-1}_0(y)) - \omega (x, y) =  l_0(x) (\lambda_0(\alpha^{-1}_0(y))) + r_0(y) (\alpha^{-1}_0(x))  - \lambda_0 ([\alpha^{-1}_0(x), \alpha^{-1}_0(y)]) \\
&+\dM_{V}\lambda_2(\alpha^{-1}_0(y),\alpha^{-1}_0(y))+\psi(\alpha_2(\alpha^{-1}_0(y),\alpha^{-1}_0(y))),\\
\\
&\beta_1 \mu (\alpha^{-1}_0(y), \alpha^{-1}_1(a)) - \mu (x, a) =  l_0(x)( \lambda_1(\alpha^{-1}_1(a))) + r_1(a)( \lambda_0(\alpha^{-1}_0(x)) ) - \lambda_1 ([\alpha^{-1}_0(x), \alpha^{-1}_1(a)])\\
&+\lambda_2(\alpha^{-1}_0(x),\dM_{\g}(\alpha^{-1}_1(a))),\\
\\
&\beta_1 \nu (\alpha^{-1}_1(a), \alpha^{-1}_0(x)) - \nu (a, x) =  r_0(x)( \lambda_1(\alpha^{-1}_1(a))) + l_1(a)( \lambda_0(\alpha^{-1}_0(x)) ) - \lambda_1 ([\alpha^{-1}_1(a), \alpha^{-1}_0(x)])\\
&+\lambda_2(\dM_{\g}(\alpha^{-1}_1(a)),\alpha^{-1}_0(x)),\\
\\
&\beta_1\theta(\alpha^{-1}_0(x),\alpha^{-1}_0(y),\alpha^{-1}_0(z))-\theta(x,y,z)= l_2(x,y)(\lambda_0(\alpha^{-1}_0(z)))- m_2(x,z)(\lambda_0(\alpha^{-1}_0(y)))\\
&- r_2(y,z)(\lambda_0(\alpha^{-1}_0(x)))+\lambda_2(\alpha^{-1}_0(x),[\alpha^{-1}_0(y),\alpha^{-1}_0(z)])-\lambda_2([\alpha^{-1}_0(x),\alpha^{-1}_0(y)],\alpha^{-1}_0(z))\\
&-\lambda_2(\alpha^{-1}_0(y),[\alpha^{-1}_0(x),\alpha^{-1}_0(z)])+ l_0(x)(\lambda_2(\alpha^{-1}_0(y),\alpha^{-1}_0(z)))+ r_1(\alpha_2(\alpha^{-1}_0(y),\alpha^{-1}_0(z)))(\lambda_0(\alpha^{-1}_0(x)))\\
&- r_0(z)(\lambda_2(\alpha^{-1}_0(x),\alpha^{-1}_0(y)))
- l_1(\alpha_2(\alpha^{-1}_0(x),\alpha^{-1}_0(y)))(\lambda_0(\alpha^{-1}_0(z)))- l_0(y)(\lambda_2(\alpha^{-1}_0(x),\alpha^{-1}_0(z)))\\
&- r_1(\alpha_2(\alpha^{-1}_0(x),\alpha^{-1}_0(z)))(\lambda_0(\alpha^{-1}_0(y)))+\mu(x,\alpha_2(\alpha^{-1}_0(y),\alpha^{-1}_0(z)))-\nu(\alpha_2(\alpha^{-1}_0(x),\alpha^{-1}_0(y)),z)\\
&-\mu(y,\alpha_2(\alpha^{-1}_0(x),\alpha^{-1}_0(z))).
\end{align*}
Thus $(\psi,\omega,\mu,\nu,\theta)_{(\beta,\alpha)}-(\psi,\omega,\mu,\nu,\theta)=\partial^1 (\lambda\alpha^{-1})$ and
$$\varpi_{\mathcal{E}} (\beta,\alpha)= [(\psi,\omega,\mu,\nu,\theta)_{(\beta,\alpha)}-\psi,\omega,\mu,\nu,\theta)]=0.$$
Then $(\beta,\alpha)$ is in the kernel of Wells map $\ker \varpi_{\mathcal{E}}$.
\end{proof}

\begin{lem}
The image of  $\Phi$ is equal to the kernel of  the Wells map $\varpi_{\mathcal{E}}$, i.e. $\operatorname{im}\Phi=\operatorname{ker} \varpi_{\mathcal{E}}$.
\end{lem}

\begin{proof}
One note that $ (\beta,\alpha) \in \operatorname{im} \Phi$ if and only if $ (\beta,\alpha)$ is inducible.
Then by the above Lemma \eqref{keylemma2} we obtain the result.
\end{proof}

By Theorem \ref{main-thm2}, the pair $(\beta,\alpha) \in C_\rho(\fg,V)$ is inducible if and only if $(\psi,\omega,\mu,\nu,\theta)_{(\beta,\alpha)}$ is a 2-coboundary. Thus $[(\psi,\omega,\mu,\nu,\theta)_{(\beta,\alpha)}] \in H^2(\fg,V)$ is an obstruction to inducibility of the pair $(\beta,\alpha) \in C_\rho(\fg,V)$. With the preceding discussion, we have derived the following exact sequence of groups associated to an abelian extension of Leibniz 2-algebras and proving Theorem \ref{wells-sequence2}.

\begin{thm} \label{wells-sequence2}
Let $\mathcal{E}:0 \to V \to\hg \to \fg \to 0$ be an abelian extension of Leibniz 2-algebras. Let $\rho: \fg \to  \End(V)$ denote the induced $\fg$-module structure on $V$. Then there is an exact sequence
\begin{equation}\label{wells-seq}
0 \to Z^1(\fg,V) \stackrel{i}{\longrightarrow} \Aut_V(\hg) \stackrel{\Phi}{\longrightarrow} C_\rho(\fg,V) \stackrel{\varpi_{\mathcal{E}}}{\longrightarrow} H^2(\fg,V).
\end{equation}
\end{thm}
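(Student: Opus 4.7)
The plan is to establish exactness at each of the three positions $Z^1(\fg,V)$, $\Aut_V(\hg)$, and $C_\rho(\fg,V)$, using the structural results already obtained in this section. Since most of the heavy lifting has been done in the preceding propositions and lemmas, this proof should amount to a careful assembly of those ingredients, plus a clear specification of what the map $i$ is.

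First, I would define $i: Z^1(\fg,V) \to \Aut_V(\hg)$ to be the composition of the isomorphism $\psi : Z^1(\fg,V) \to \Aut^{V,\fg}(\hg)$ constructed in the earlier proposition (sending $\lambda \mapsto F_\lambda$) with the natural inclusion $\Aut^{V,\fg}(\hg) \hookrightarrow \Aut_V(\hg)$. Exactness at $Z^1(\fg,V)$ is then immediate: $\psi$ is injective (in fact an isomorphism onto its image) and the inclusion is obviously injective, so $i$ is injective.

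Next, for exactness at $\Aut_V(\hg)$, I would observe that $\operatorname{im}(i)$ is by construction equal to $\Aut^{V,\fg}(\hg)$, and $\Aut^{V,\fg}(\hg) = \ker \Phi$ by the definition $\ker \Phi = \{F \in \Aut_V(\hg) : \overline{F}=\id_\g,\, F|_V=\id_V\}$ given just before the Proposition. Hence $\operatorname{im}(i) = \ker \Phi$.

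Finally, for exactness at $C_\rho(\fg,V)$, the earlier lemma already asserts $\operatorname{im}\Phi = \ker \varpi_{\mathcal{E}}$, so this step is a direct citation. To make the picture self-contained I would remind the reader that Theorem \ref{main-thm2} shows a compatible pair $(\beta,\alpha) \in C_\rho(\fg,V)$ is inducible precisely when the required $\lambda$ exists, and that the existence of $\lambda$ is exactly the condition $(\psi,\omega,\mu,\nu,\theta)_{(\beta,\alpha)}-(\psi,\omega,\mu,\nu,\theta) \in B^2(\fg,V)$, i.e.\ $\varpi_{\mathcal{E}}(\beta,\alpha)=0$. The main obstacle, conceptually, is not in this final assembly but in verifying that $\Phi$ actually lands in $C_\rho(\fg,V)$ (so that the sequence is well-defined at that spot); this has been established earlier by showing the compatibility conditions (\ref{COC6})--(\ref{COC14}) are automatic for any $F \in \Aut_V(\hg)$. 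With these four facts in hand, the exactness of \eqref{wells-seq} follows by concatenation, and no further computation is required.
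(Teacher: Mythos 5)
Your proposal is correct and follows essentially the same route as the paper, which proves the theorem by exactly this assembly: the isomorphism $Z^1(\fg,V)\cong \Aut^{V,\fg}(\hg)=\ker\Phi$, the lemma that $\Phi$ lands in $C_\rho(\fg,V)$, and the lemma $\operatorname{im}\Phi=\ker\varpi_{\mathcal{E}}$ (the paper merely states that "the preceding discussion" proves the theorem, so your explicit three-step exactness check is a faithful, slightly more careful rendering of the intended argument).
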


\section{Wells exact sequence for  derivations}
In this section, we study
the inducibility of a pair of derivations about abelian extensions of Leibniz 2-algebras.  Theorem \ref{main-thm1} provides
necessary and sufficient conditions for a pair of derivations to be inducible.

The groups of all Leibniz 2-algebra derivations of degree $0$ of $V$, $\widehat{\g}$, and $\fg$ are denoted as $\Der(V)$, $\Der(\widehat{\g})$, and $\Der(\fg)$, respectively. Let $\Der_V(\widehat{\g})$ denote the group of all Leibniz 2-algebra derivations of $\widehat{\g}$ which keep $V$ invariant as a set. Note that a derivation ${D}=({D}_0, {D}_1, {D}_2) \in \Der_V(\widehat{\g})$ induces derivations ${D}|_V=(\beta_0, \beta_1) \in \Der(V)$ given by
 $$\beta_0(v)={D}_0|_V(v),\quad \beta_1(m)={D}_1|_V(m),$$
 for all $u, v \in V_0, m \in V_1$.
 And we can also define a map $\overline{{D}}=(\alpha_0, \alpha_1, \alpha_2) : \mathfrak{g} \rightarrow \mathfrak{g}$ by
\begin{align*}
\alpha_0(x)=p_0{D}_0\sigma_0(x),\quad \alpha_1(a)=p_1{D}_1\sigma_1(a), \quad \alpha_2(x, y)=p_1{D}_2(\sigma_0(x),\sigma_0(y)),
\end{align*}
for all $x, y \in \g_0, a \in \g_1$.

\begin{prop}
Let  ${D}=({D}_0, {D}_1, {D}_2)  \in \Der_V(\widehat{\g})$  be a Leibniz 2-algebra derivation. Then   $\overline{{D}}=(\alpha_0, \alpha_1, \alpha_2)$ defined above is  a Leibniz 2-algebra derivation of $\g$.
\end{prop}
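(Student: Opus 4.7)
The plan is to verify the five derivation axioms (i), (n), (o), (p), (q) of Definition \ref{Def:derivation} for $(\alpha_0, \alpha_1, \alpha_2)$ directly from the fact that $D$ is a degree-zero derivation of $\widehat{\g}$ preserving the ideal $V$. The strategy exactly mirrors the preceding proposition for automorphisms: I would evaluate each derivation identity on the distinguished elements $\sigma_0(x), \sigma_0(y), \sigma_0(z) \in \widehat{\g}_0$ and $\sigma_1(a) \in \widehat{\g}_1$, expand the brackets and the $3$-bracket using the explicit formulas \eqref{eq:bracket}, and then apply the projections $p_0, p_1$. Since $D_i$ maps $V_i$ into $V_i$ and $p$ annihilates $V$, all "correction'' terms arising from the $2$-cocycle $(\psi,\omega,\mu,\nu,\theta)$ and from the discrepancy $D_i\sigma_i(\cdot)-\sigma_i\alpha_i(\cdot) \in V_i$ drop out after projection, leaving precisely the desired identity on $\g$.

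Concretely, for axiom (i) I would start from $\widehat{\dM}\sigma_1(a) = \sigma_0 \dM_{\g}(a) + \psi(a)$, apply $D_0\widehat{\dM} = \widehat{\dM} D_1$, and then apply $p_0$; the term $p_0 D_0 \psi(a)$ vanishes because $\psi(a)\in V_0$ and $D_0(V_0)\subseteq V_0 = \ker p_0$. For axiom (n) I would apply $D_0$ to $[\sigma_0(x),\sigma_0(y)]_{\widehat{\g}} = \sigma_0[x,y]_{\g} + \omega(x,y)$, use the Leibniz derivation identity for $D$ on $\widehat{\g}$, and then apply $p_0$; the bracket $[D_0\sigma_0(x),\sigma_0(y)]_{\widehat{\g}}$ splits as $\sigma_0[\alpha_0(x),y]_{\g} + (\text{terms in }V_0)$, so after projection one recovers exactly $\alpha_0[x,y]_{\g} - [\alpha_0(x),y]_{\g} - [x,\alpha_0(y)]_{\g} = \dM_{\g}\alpha_2(x,y)$. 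Axioms (o) and (p) follow by the same recipe applied to $[\sigma_0(x),\sigma_1(a)]_{\widehat{\g}}$ and $[\sigma_1(a),\sigma_0(x)]_{\widehat{\g}}$.

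The main obstacle is axiom (q) for $l_3$, which on $\widehat{\g}$ contains many bracket-and-$l_3$ cross terms together with the ternary correction $\theta$. Here I would begin from
\[
\widehat{l_3}(\sigma_0(x),\sigma_0(y),\sigma_0(z)) = \sigma_1 l_3^{\g}(x,y,z) + \theta(x,y,z),
\]
apply $D_1$ using the derivation identity (q) for $D$ on $\widehat{\g}$, and then apply $p_1$. The $V_1$-valued pieces generated by $\theta$, by the $2$-cocycle terms $\omega, \mu, \nu$, and by the differences $D_i\sigma_i - \sigma_i\alpha_i$ are all killed by $p_1$, while each surviving bracket and each $l_3^{\widehat{\g}}$ term projects cleanly to the corresponding $\g$-term via $p_0\sigma_0 = \id_{\g_0}$ and $p_1\sigma_1 = \id_{\g_1}$. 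What remains on the two sides of the projected identity is exactly axiom (q) for $(\alpha_0,\alpha_1,\alpha_2)$. The bookkeeping is lengthier than in (i), (n), (o), (p), but no new idea is required beyond the observation that $V$ is a derivation-invariant abelian ideal, so the argument is entirely parallel to the calculation carried out for the automorphism case in the preceding proof.
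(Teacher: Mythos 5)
Your proposal is correct and follows essentially the same route as the paper: both arguments push each derivation axiom for $D$ on $\widehat{\g}$ through the projection $p$, using $p_0\sigma_0=\id$, $p_1\sigma_1=\id$, the fact that the cocycle terms $\psi,\omega,\mu,\nu,\theta$ and the discrepancies $D_i\sigma_i-\sigma_i\alpha_i$ land in $V=\ker p$, and that $D$ preserves $V$, so all correction terms vanish after projection. The only cosmetic difference is that the paper starts from the $\g$-side expressions (e.g. $\dM_{\g}\alpha_2(x,y)=p_0\widehat{\dM}D_2(\sigma_0(x),\sigma_0(y))$) and expands, whereas you start from the $\widehat{\g}$-side identities and project; the computations are identical.
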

\begin{proof}
${F}$ is a derivation on $\widehat{\g}$ preserving the space $V$, it also preserves the subspace $\mathfrak{g}$, it is easy to see that $\al_0$ and $\al_1$ are bijective maps on $\fg_0$ and $\fg_1$.

For any $x,y \in \g_0, u, v \in \g_1$, we have
\begin{align*}
(\alpha_0\dM_{\g}-\dM_{\g}\alpha_1)(a)=&(p_0{D}_0\sigma_0\dM_{\g}-\dM_{\g}p_1{D}_1\sigma_1)(a)\\
=&(p_0{D}_0\sigma_0\dM_{\g}-p_0{D}_0\widehat{\dM}\sigma_1)(a)\\
=&p_0{D}_0(\sigma_0\dM_{\g}-\widehat{\dM}\sigma_1)(a)\\
=&0,\\
\dM_{\g}\alpha_2(x,y)=&\dM_{\g}p_1{D}_2(\sigma_0(x), \sigma_0(y))\\
=&p_0\widehat{\dM}{D}_2(\sigma_0(x), \sigma_0(y))\\
=&p_0\big(\alpha_0([\sigma_0(x), \sigma_0(y)])-[{D}_0(\sigma_0(x)), \si_0(y)]-[\si_0(x), {D}_0(\sigma_0(y))]\big)\\
=&p_0\alpha_0(\omega(x, y)+ \sigma_0([x,y]))-[\alpha_0(x),y]-[x, \alpha_0(y)]\\
=&\alpha_0([x,y])-[\alpha_0(x),y]-[x, \alpha_0(y)],\\
\end{align*}
Similarly, we have
\begin{align*}
\alpha_2(x, \dM_{\g}(a))=&\alpha_1[x,a]-[\alpha_0(x),a]-[x,\alpha_1(a)],\\
\alpha_2(\dM_{\g}(a),x)=&\alpha_1[a, x]-[\alpha_1(a),x]-[a, \alpha_0(x)],
\end{align*}
Next, we have
\begin{align*}
&\alpha_1(l_3^\g(x,y,z))-l_3^\g(\alpha_0(x),y,z)-l_3^\g(x,\alpha_0(y),z)-l_3^\g(x,y,\alpha_0(z))\\
=&\alpha_1(\widehat{l}_3(\sigma_0(x),\sigma_0(y),\sigma_0(z))-\theta(x,y,z))-l_3^\g(p_0{D}_0\sigma_0(x),y,z)\\
&~-l_3^\g(x,p_0{D}_0\sigma_0(y),z)-l_3^\g(x,y,p_0{D}_0\sigma_0(z))\\
=&p_1\big({D}_1\widehat{l}_3(\sigma_0(x),\sigma_0(y),\sigma_0(z))-\widehat{l}_3({D}_0\sigma_0(x),\si_0(y),\si_0(z))\\
&~-\widehat{l}_3(\si_0(x),{D}_0\sigma_0(y),\si_0(z))-\widehat{l}_3(\si_0(x),\si_0(y),{D}_0\sigma_0(z))\big)\\
=&p_1{D}_2(\sigma_0(x), [\sigma_0(y),\sigma_0(z)])- p_1{D}_2([\sigma_0(x),\sigma_0(y)],\sigma_0(z))\\
 &~- p_1{D}_2(\sigma_0(y),[\sigma_0(x),\sigma_0(z)])+ p_1[{D}_0(\sigma_0(x)), {D}_2(\sigma_0(y),\sigma_0(z))]\\
  &~- p_1[{D}_2(\sigma_0(x),\sigma_0(y)), {D}_0(\sigma_0(z))] - p_1[{D}_0(\sigma_0(y)), {D}_2(\sigma_0(x),\sigma_0(z))]\\
 =&\alpha_2(x, [y,z])- \alpha_2([x,y],z) - \alpha_2(y,[x,z])\\
 &~+[\alpha_0(x), \alpha_2(y,z)] - [\alpha_2(x,y), \alpha_0(z)] - [\alpha_0(y), \alpha_2(x,z)].
\end{align*}
This shows that $\overline{{D}} : \mathfrak{g} \rightarrow \mathfrak{g}$ is a Leibniz 2-algebra derivation.
\end{proof}

By the above discussion, we obtain a group homomorphism
$$\Phi: \Der_V(\widehat{\g}) \to \Der(V) \times \Der(\fg)$$
given by
$$\Phi({D})= ({D}|_V, \overline{{D}}).$$

A pair of derivations $(\beta, \alpha) \in \Der(V)\times \Der(\fg)$ is called  \textbf{inducible} if there exists a ${D} \in \Der_V(\widehat{\g})$ such that
$$\Phi({D}_0)= (\beta_0, \alpha_0),\quad \Phi({D}_1)= (\beta_1, \alpha_1),\quad \Phi({D}_2)= (0, \alpha_2).$$

We give a characterization of when a pair of derivations is inducible.
The proof is similar as the proof of Theorem  \ref{main-thm2} in last section. Thus we omit the details.
\begin{thm}\label{main-thm1}
Let $0 \to V \to \hg\to \fg \to 0$ be an abelian extension of Leibniz 2-algebras   and $(\theta, \phi) \in \Der(V)\times \Der(\fg)$. Then the pair $(\theta, \phi)$ is inducible if and only if exists a linear map $\lambda \in \mathrm{Hom}(\mathfrak{g}, V)$ satisfying the followings
{\footnotesize
\begin{align}
&\label{COCY1}\beta_0(\psi (a)) - \psi (\alpha_1(a)) = \dM_{V}(\lambda_1(a))-\lambda_0 (\dM_{\g}(a)),\\
&\label{COCY2}\beta_0 (\omega (x, y)) - \omega (\alpha_0(x),y)-\omega(x, \alpha_0(y)) = l_0(x) (\lambda_0(y)) + r_0(y) (\lambda_0(x))  - \lambda_0 [x, y]+\dM_{V}\lambda_2(x,y)\nonumber\\
& +\psi(\alpha_2(x,y)),\\
&\label{COCY3}\beta_1 (\mu (x, a)) - \mu (\alpha_0(x), a) - \mu (x, \alpha_1(a))= l_0(x)( \lambda_1(a)) + r_1(a)( \lambda_0(x) ) - \lambda_1 [x, a]+\lambda_2(x,\dM_{\g}(a)),\\
&\label{COCY4}\beta_1 (\nu (a, x)) - \nu (\alpha_1(a), x)- \nu (a, \alpha_0(x)) = r_0(x)( \lambda_1(a)) + l_1(a)( \lambda_0(x) ) - \lambda_1 [a, x]+\lambda_2(\dM_{\g}(a),x),\\
&\label{COCY5}\beta_1\theta(x,y,z)-\theta(\alpha_0(x),y,z)-\theta(x,\alpha_0(y),z)-\theta(x,y,\alpha_0(z))
-\mu(x,\alpha_2(y,z))+l_0(y)(\lambda_2(x,z))\nonumber\\
&+\nu(\alpha_2(x,y),z)+\mu(y,\alpha_2(x,z))
+r_2(y,z)(\lambda_0(x))+m_2(x,z)(\lambda_0(y))
+l_2(x,y)(\lambda_0(z))\nonumber\\
&=\lambda_2(x,[y,z])-\lambda_2([x,y],z)-\lambda_2(y,[x,z])
+l_0(x)(\lambda_2(y,z))
-r_0(z)(\lambda_2(x,y))-\lambda_1(l^{\g}_3(x,y,z))\\
&\label{COCY6}\beta_0l_0(x)(v)-l_0(\alpha_0(x))(v)
-l_0(x)(\beta_0(v))=0,\\
&\label{COCY7}\beta_0r_0(x)(v)
-r_0(\alpha_0(x))(v)-r_0(x)(\beta_0(v))=0,\\
&\label{COCY8}\beta_1l_0(x)(m)-l_0(\alpha_0(x))(m)
-l_0(x)(\beta_1(m))=0,\\
&\label{COCY9}\beta_1r_0(x)(m)
-r_0(\alpha_0(x))(m)-r_0(x)(\beta_1(m))=0,\\
&\label{COCY10}\beta_1l_1(a)(v)-l_1(\alpha_1(a))(v)
-l_1(a)(\beta_0(v))=0,\\
&\label{COCY11}\beta_1r_1(a)(v)
-r_1(\alpha_1(a))(v)-r_1(a)(\beta_0(v))=0,\\
 &\label{COCY12}l_2(\alpha_0(x),y)(w)+l_2(x,\alpha_0(y))(w)+l_2(x,y)(\beta_0(w))+l_1\alpha_2(x,y){\beta_0}(w)=0,\\
 &\label{COCY13}r_2(\alpha_0(x),y)(w)+r_2(x,\alpha_0(y))(w)+r_2(x,y)(\beta_0(w))-r_1\alpha_2(x,y){\beta_0}(w)=0,\\
 &\label{COCY14}m_2(\alpha_0(x),y)(w)+m_2(x,\alpha_0(y))(w)+m_2(x,y)(\beta_0(w))+r_1\alpha_2(x,y){\beta_0}(w)=0.
\end{align}
}
\end{thm}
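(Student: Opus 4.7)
The plan is to adapt, in a fairly mechanical way, the proof of Theorem~\ref{main-thm2}, replacing throughout the homomorphism identities (i)--(m) of Definition~\ref{defi:Lie-2hom} for the automorphism $F$ by the Leibniz-rule identities ($D_0\dM_\g=\dM_\g D_1$) and (n)--(q) of Definition~\ref{Def:derivation} for the derivation $D$. The linear-algebraic skeleton of the argument---splitting every element of $\widehat{\g}$ as $\sigma(x)+v$, measuring the defect of $D$ relative to $\sigma\alpha$, and reading the result off the structure (\ref{eq:bracket})---transfers verbatim.

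For the forward implication, assume $(\beta,\alpha)$ is inducible by some $D=(D_0,D_1,D_2)\in\Der_V(\widehat{\g})$. Since $p\circ\sigma=\mathrm{id}$ and $\Phi(D)=(\beta,\alpha)$, each of
$$D_0\sigma_0(x)-\sigma_0\alpha_0(x),\quad D_1\sigma_1(a)-\sigma_1\alpha_1(a),\quad D_2(\sigma_0(x),\sigma_0(y))-\sigma_1\alpha_2(x,y)$$
lies in $\ker p=V$, and this defines a linear map $\lambda=(\lambda_0,\lambda_1,\lambda_2)\in\Hom(\g,V)$. Plugging the typical elements $e_i=\sigma_0(x_i)+u_i$ and $h=\sigma_1(a)+m$ into the derivation axioms for $D$, expanding every bracket and the trilinear map $l_3^{\widehat{\g}}$ via the formulas (\ref{eq:bracket}) and (\ref{eq:morphism1}), and reading off the $V$-components yields (\ref{COCY1})--(\ref{COCY5}). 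The compatibility relations (\ref{COCY6})--(\ref{COCY14}) are then extracted from the same axioms by specialising one or more arguments to lie entirely in $V$, which isolates the part of the derivation rule that sees only the representation $\rho$ and not the cocycle $(\psi,\omega,\mu,\nu,\theta)$.

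Conversely, given $\lambda$ satisfying (\ref{COCY1})--(\ref{COCY14}), define
$$D_0(\sigma_0(x)+v)=\sigma_0\alpha_0(x)+\lambda_0(x)+\beta_0(v),\qquad D_1(\sigma_1(a)+m)=\sigma_1\alpha_1(a)+\lambda_1(a)+\beta_1(m),$$
$$D_2(\sigma_0(x)+u,\sigma_0(y)+v)=\sigma_1\alpha_2(x,y)+\lambda_2(x,y),$$
and verify the derivation axioms directly: $D_0\dM_\g=\dM_\g D_1$ follows from (\ref{COCY1}); axiom (n) from (\ref{COCY2}) together with (\ref{COCY6})--(\ref{COCY7}); axioms (o) and (p) from (\ref{COCY3})--(\ref{COCY4}) together with (\ref{COCY8})--(\ref{COCY11}); and axiom (q) from (\ref{COCY5}) together with (\ref{COCY12})--(\ref{COCY14}). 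The identity $\Phi(D)=(\beta,\alpha)$ is then immediate from $p\sigma=\mathrm{id}$. In contrast with the automorphism case no bijectivity argument is needed, so this half of the proof is shorter than the corresponding half of Theorem~\ref{main-thm2}.

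The only substantive obstacle, as in Theorem~\ref{main-thm2}, is axiom (q): after expanding $D_1 l_3^{\widehat{\g}}(e_1,e_2,e_3)-\sum_{i} l_3^{\widehat{\g}}(e_1,\dots,D_0 e_i,\dots,e_3)$ via (\ref{eq:bracket}) one obtains a lengthy sum that must be organised by the number of $e_i$'s sitting in $V$; the pieces with zero or one $V$-argument collapse to (\ref{COCY5}), while the pieces with two or three $V$-arguments force precisely (\ref{COCY12})--(\ref{COCY14}). This bookkeeping, while tedious, is parallel to the corresponding step in the proof of Theorem~\ref{main-thm2} and introduces no new conceptual ingredient.
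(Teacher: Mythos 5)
Your proposal is correct and is exactly the route the paper takes: the paper itself omits the proof of this theorem, stating only that it is obtained by the same argument as Theorem \ref{main-thm2} with the homomorphism identities replaced by the derivation identities, which is precisely the adaptation you carry out (including the correct observation that the compatibility conditions become additive Leibniz-rule identities rather than conjugation identities, and that no surjectivity/injectivity argument is needed in the converse direction). No discrepancy with the paper's intended proof.
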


\begin{rmk}
 A pair $(\be,\, \al) \in \Der(V) \times \Der(\fg)$ is called compatible if the conditions (\ref{COCY6})-(\ref{COCY14}) hold.
It is easy to see that the set
$$ {D}_\rho(\fg,V)=\{(\beta,\, \alpha) \in \Der(V) \times \Der(\fg)|(\beta,\, \alpha) ~\text{satisfies  (\ref{COCY6})-(\ref{COCY14}) } \}.$$
 The $ {D}_\rho(\fg,V)$ is a subgroup of $\Der(V) \times \Der(\fg)$. Conditions (\ref{COCY6})-(\ref{COCY14}) of the above theorem shows that every inducible pair is compatible.
\end{rmk}

Next, we show that the obstruction to inducibility of a pair of derivations in $\Der(V) \times \Der(\fg)$ lies in the Leibniz 2-algebras cohomology $H^2(\fg,V)$.
\begin{lem}
$\Phi$  is a group homomorphism from $\operatorname{Der}_V(\widehat{\mathfrak{g}}) $ to $  {D}_\rho(\fg,V) $.
\end{lem}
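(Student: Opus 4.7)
The plan is to prove the statement in two parts, mirroring the structure of the analogous argument for automorphisms. First, I would verify that $\Phi(D)=(\beta,\alpha)$ lands in $D_\rho(\fg,V)$, i.e.\ that the compatibility conditions \eqref{COCY6}--\eqref{COCY14} hold. Second, I would verify that $\Phi$ respects the group operation; here derivations form an abelian group under pointwise addition, so the task reduces to checking that both components of $\Phi$ are linear.

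For the first part, the key input is that $D=(D_0,D_1,D_2)\in\Der_V(\widehat{\g})$ restricts on $V$ to $\beta=(\beta_0,\beta_1)$ and that $\overline{D}=(\alpha_0,\alpha_1,\alpha_2)$ satisfies $p_0(\si_0\alpha_0(x)-D_0\si_0(x))=0$, $p_1(\si_1\alpha_1(a)-D_1\si_1(a))=0$, and $p_1(\si_1\alpha_2(x,y)-D_2(\si_0(x),\si_0(y)))=0$. Thus the elements $\lambda_0(x):=D_0\si_0(x)-\si_0\alpha_0(x)$, $\lambda_1(a):=D_1\si_1(a)-\si_1\alpha_1(a)$, and $\lambda_2(x,y):=D_2(\si_0(x),\si_0(y))-\si_1\alpha_2(x,y)$ lie in $V$. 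For condition \eqref{COCY6}, I apply $D_0$ to the bracket $[\si_0(x),v]=l_0(x)(v)\in V_0$, using the Leibniz-2 derivation identity in $\widehat{\g}$:
\begin{equation*}
D_0[\si_0(x),v]=[D_0\si_0(x),v]+[\si_0(x),D_0(v)]+\widehat{\dM}D_2(\si_0(x),v).
\end{equation*}
Since $\lambda_0(x),v\in V_0$ and $V$ is abelian we have $[\lambda_0(x),v]=0$, and the $D_2$-correction vanishes upon applying $p_0$ (together with the fact that $V$ is abelian in degree $\widehat{l_3}$ too), leaving exactly $\beta_0 l_0(x)(v)=l_0(\alpha_0(x))(v)+l_0(x)(\beta_0(v))$. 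The same template, applied respectively to $[v,\si_0(x)]$, $[\si_0(x),m]$, $[m,\si_0(x)]$, $[\si_1(a),v]$, $[v,\si_1(a)]$, and to $\widehat{l_3}$ with one argument in $V$, yields \eqref{COCY7}--\eqref{COCY11}; the three conditions \eqref{COCY12}--\eqref{COCY14} come from applying $D_1$ to $\widehat{l_3}(\si_0(x),\si_0(y),w)$, $\widehat{l_3}(w,\si_0(x),\si_0(y))$, and $\widehat{l_3}(\si_0(x),w,\si_0(y))$ and expanding using the Jacobiator-derivation identity (q) of Definition~\ref{Def:derivation}, together with the fact that $\widehat{l_3}$ vanishes whenever two of its arguments lie in the abelian ideal $V$.

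For the second part, let $D,D'\in\Der_V(\widehat{\g})$. The sum $D+D'$ defined componentwise is again a derivation preserving $V$, and
\begin{align*}
(D+D')|_V &= D|_V+D'|_V = \beta+\beta',\\
\overline{D+D'}_i &= p_i(D_i+D'_i)\si_i = p_i D_i\si_i + p_i D'_i\si_i = \overline{D}_i+\overline{D'}_i \quad (i=0,1,2),
\end{align*}
so $\Phi(D+D')=\Phi(D)+\Phi(D')$, proving the homomorphism property.

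The main obstacle is simply bookkeeping: there are nine compatibility conditions and several of them (particularly \eqref{COCY12}--\eqref{COCY14}) involve the ternary bracket $\widehat{l_3}$ together with $D_2$-correction terms, so one must carefully track which terms vanish because $V$ is abelian (so $[\cdot,\cdot]_V=0$ and $l_3^V=0$), which vanish because of the $V$-preservation property $D_0(V_0)\subseteq V_0$, $D_1(V_1)\subseteq V_1$, and which collapse under the projections $p_0,p_1$. Once this organization is in place, each identity follows from a single application of the derivation axioms (n)--(q) of Definition~\ref{Def:derivation} to $\widehat{\g}$, so the argument is entirely parallel to the corresponding lemma for automorphisms proved earlier.
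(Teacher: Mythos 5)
The paper states this lemma without proof, so the only benchmark is its proof of the automorphism analogue; your two-step plan (first show $\operatorname{im}\Phi\subseteq D_\rho(\fg,V)$ by deriving \eqref{COCY6}--\eqref{COCY14}, then check that $\Phi$ respects the group law) is exactly that template. Your second step is correct, and in fact easier than in the automorphism case: $\Der_V(\hg)$ and $D_\rho(\fg,V)$ are additive groups, and each ingredient of $\Phi$ (namely $D\mapsto D|_V$ and $D\mapsto p\circ D\circ\si$) is visibly linear in $D$, so additivity is immediate.

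The one genuine soft spot is your treatment of the correction term $\widehat{\dM}D_2(\si_0(x),v)$ in the derivation of \eqref{COCY6}. Your stated reason --- that it ``vanishes upon applying $p_0$'' --- cannot work: every term of \eqref{COCY6} already lies in $V_0$, so applying $p_0$ annihilates the entire identity and yields no information. What axiom (n) of Definition \ref{Def:derivation}, applied to $e_1=\si_0(x)$ and $e_2=v$, actually gives is
\begin{equation*}
\beta_0 l_0(x)(v)-l_0(\alpha_0(x))(v)-l_0(x)(\beta_0(v))=\widehat{\dM}D_2(\si_0(x),v),
\end{equation*}
so \eqref{COCY6} holds precisely when the right-hand side vanishes; this is not automatic for an arbitrary element of $\Der_V(\hg)$ and must either be imposed (for instance the convention that $D_2$ vanishes whenever one argument lies in $V$, which does hold for the derivations constructed in the converse direction of Theorem \ref{main-thm1}) or be carried along as an extra term in the compatibility conditions. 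The same remark applies to the analogous corrections in \eqref{COCY7}--\eqref{COCY14}. To be fair, the paper makes exactly the same silent omission in its proof of the automorphism lemma (the term $\widehat{\dM}{F}_2(\si_0(x),v)$ is dropped without comment), so your reconstruction is no less rigorous than the model it follows; but to make the lemma airtight you should state the hypothesis on $D_2$ explicitly rather than appeal to $p_0$.
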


\begin{eqnarray*}
&&\text{Let} ~ \ker \Phi=\Der^{V,\fg}(\hg)= \big\{{D} \in \Der(\hg)~|~ \Phi({D}_0)=(1_{V_0},1_{\fg_0}), ~\Phi({D}_1)=(1_{V_1},1_{\fg_1}),~\Phi({D}_2)=(0,0)\big\}.
\end{eqnarray*}
and ${Z}^1(\fg,V) $ are elements $\lambda \in {C}^1(\fg,V)$ such that  the following formulas are satisfied:
\begin{align*}
&\dM_\frkh \lambda_1(a)-\lambda_0 \dM_\g (a)=0,\\
&l_0(x)\lambda_0(y)+r_0(y)\lambda_0(x)-\lambda_0[x,y]_\g+\dM_\frkh\circ \lambda_2(x,y)=0,\\
&l_0(x)\lambda_1(a)+r_1(a)\lambda_0(x)-\lambda_1[x,a]_\g+\lambda_2(x,\dM_\g (a))=0,\\
&l_1(a)\lambda_1(x)+r_0(x)\lambda_0(a)-\lambda_1[a,x]_\g+\lambda_2(\dM_\g (a),x)=0,\\
&l_0(x)\lambda_2(y,z)-r_0(z)\lambda_2(x,y)-l_0(y)\lambda_2(x,z)-\lambda_1(l_3^\g(x,y,z)-\rho^L_{2}(x,y)(\lambda_0(z))\\
&-\rho_2^M(x,z)(\lambda_0(y))-\rho^R_{2}(y,z)(\lambda_0(x))+\lambda_2(x, [y,z])- \lambda_2([x,y],z) - \lambda_2(y,[x,z])=0.
\end{align*}

\begin{prop}
Let $\mathcal{E}:0 \to V \to\hg \to \fg \to 0$ be an abelian extension of Leibniz 2-algebras. Then ${Z}^1(\fg,V) \cong \Der^{V,\fg}(\hg)$ as groups.
\end{prop}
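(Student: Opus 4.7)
The plan is to mimic the structure of the previous proposition (the $\Aut^{V,\fg}(\hg)$ case) by constructing an explicit group isomorphism $\psi: Z^1(\fg,V) \to \Der^{V,\fg}(\hg)$. Given a 1-cocycle $\lambda=(\lambda_0,\lambda_1,\lambda_2)$, I would define $D_\lambda=(D_{\lambda_0}, D_{\lambda_1}, D_{\lambda_2})$ on $\hg \cong \g\oplus V$ (via the splitting $\sigma$) by
\begin{align*}
D_{\lambda_0}(\sigma_0(x)+v) &= \lambda_0(x),\\
D_{\lambda_1}(\sigma_1(a)+m) &= \lambda_1(a),\\
D_{\lambda_2}(\sigma_0(x)+u,\sigma_0(y)+v) &= \lambda_2(x,y).
\end{align*}
The key structural observation is that, unlike in the automorphism case (where $F_{\lambda_0}$ added the identity piece $\sigma_0(x)+v$ to $\lambda_0(x)$), here $D_\lambda$ lands entirely in $V$, which is consistent with the fact that the additive identity of $\Der$ is $0$. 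By construction $D_\lambda|_V = 0$ and $\overline{D_\lambda}=(0,0,0)$, so membership in $\Der^{V,\fg}(\hg)$ is automatic provided $D_\lambda$ is a derivation.

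First I would verify that $D_\lambda$ satisfies conditions (i) and (n)-(q) of Definition \ref{Def:derivation}. Each of the five 1-cocycle equations listed before the proposition is tailored to one of these derivation identities once one expands the bracket, differential, and $l_3$ of $\hg$ via the formulas \eqref{eq:bracket} in terms of the 2-cocycle $(\psi,\omega,\mu,\nu,\theta)$ and the induced representation $(l_0,r_0,l_1,r_1,l_2,m_2,r_2)$. For instance, $\dM_V \lambda_1(a)-\lambda_0 \dM_\g(a)=0$ expands to $D_{\lambda_0}\widehat{\dM}(\sigma_1(a)+m)=\widehat{\dM} D_{\lambda_1}(\sigma_1(a)+m)$, and the bracket cocycle identity produces (n), with mixed terms involving $V$ on both sides cancelling because $D_\lambda$ vanishes on $V$.

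Next $\psi$ is clearly a group homomorphism since $D_{\lambda+\lambda'}=D_\lambda+D_{\lambda'}$ by linearity of the defining formulas. For injectivity, $D_\lambda=0$ forces $\lambda_0(x)=D_{\lambda_0}(\sigma_0(x))=0$, and similarly $\lambda_1=0$, $\lambda_2=0$. For surjectivity, given any $D\in\Der^{V,\fg}(\hg)$, the conditions $\overline{D}=(0,0,0)$ and $D|_V=0$ give $p_0 D_0 \sigma_0(x)=0$, $p_1 D_1 \sigma_1(a)=0$, and $p_1 D_2(\sigma_0(x),\sigma_0(y))=0$, so the formulas $\lambda_0(x):=D_0\sigma_0(x)$, $\lambda_1(a):=D_1\sigma_1(a)$, $\lambda_2(x,y):=D_2(\sigma_0(x),\sigma_0(y))$ land in $V_0$, $V_1$, $V_1$ respectively. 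Then the derivation axioms for $D$ translate directly into the five 1-cocycle conditions for $\lambda$, and $\psi(\lambda)=D$ by construction.

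The main technical obstacle will be the verification of the $l_3$-derivation condition (q) and the corresponding 1-cocycle identity. Expanding $D_{\lambda_1}(\widehat{l_3}(e_1,e_2,e_3))$ via \eqref{eq:bracket} produces a sum of terms involving $\theta$, and the combinations $l_2(x,y)(w)$, $m_2(x,z)(v)$, $r_2(y,z)(u)$, all of which must match the right-hand side built from brackets of $D_{\lambda_2}$ values against elements of $\hg$. Because $D_{\lambda_1}$, $D_{\lambda_2}$ are $V$-valued and $[\cdot,\cdot]_\hg$ restricted to $V$ is zero, most cross-terms collapse; what remains is precisely the fifth cocycle identity. This is routine but lengthy bookkeeping, parallel to the display in the surjectivity step of the previous proposition, which is why I would only sketch it and refer to that calculation.
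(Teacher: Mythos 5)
Your proposal follows essentially the same route as the paper: define $\psi(\lambda)=D_\lambda$ with $D_{\lambda_0}(\sigma_0(x)+v)=\lambda_0(x)$, $D_{\lambda_1}(\sigma_1(a)+m)=\lambda_1(a)$, $D_{\lambda_2}(\sigma_0(x)+u,\sigma_0(y)+v)=\lambda_2(x,y)$, check the derivation axioms against the five $1$-cocycle identities, and invert the construction for surjectivity. Your normalization of the kernel ($\overline{D}=0$, $D|_V=0$, hence $D_0\sigma_0(x)=\lambda_0(x)$ with no $\sigma_0(x)$ summand) is in fact the internally consistent version of what the paper writes, since the paper's surjectivity step states $D_0(\sigma_0(x))=\lambda_0(x)+\sigma_0(x)$ while its own definition of $D_\lambda$ and its subsequent computations omit that summand.
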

\begin{proof}
Define $\psi: {Z}^1(\fg,V) \to \Der(\hg)$ by $\psi(\lambda)= {D}_\lambda$, where ${D}_\lambda=({D}_{\lambda_0},{D}_{\lambda_1},{D}_{\lambda_2}):\hg \to \hg$ is given by
\begin{align*}
{D}_{\lambda_0}\big(\si_0(x)+v\big)=&~\lambda_0(x),\\
{D}_{\lambda_1}\big(\si_1(a)+m\big)=&~\lambda_1(a),\\
{D}_{\lambda_2}\big(\si_0(x)+u,~\si_0(y)+v\big)=&~\lambda_2(x,y),
\end{align*}
{for all}  $u,v \in V_0$, $x,y \in \fg_0$, $a\in \g_1$ and $m \in V_1$.

Since $\si$ and $\lambda$ are linear maps, it follows that ${D}_\lambda$ is also linear. Let $e_1=\si_0(x)+u$, $e_2=\si_0(y)+v$, $e_3=\si_0(z)+w$ and $h_1=\si_1(a)+m$, for all $x, y, z \in \g_0$, $u,v \in V_0$, $a \in \g_1$ and $ m\in V_1$. Then
\begin{align*}
&{D}_{\lambda_0}\widehat{\dM}(h_1)-\widehat{\dM}{D}_{\lambda_1}(h_1)\\
=&{D}_{\lambda_0}\widehat{\dM}(\sigma_1(a)+m)-\widehat{\dM}{D}_{\lambda_1}(\sigma_1(a)+m)\\
=&{D}_{\lambda_0}(\psi(a)+\sigma_0\dM_\g(a)+\dM_V(m))-\widehat{\dM}(\lambda_1(a))\\
=&\lambda_0(\dM_\g(a))-\dM_V(\lambda_1(a))\\
=&~0,\\
\\
&{D}_{\lambda_0}[e_1,e_2]-[{D}_{\lambda_0}(e_1),e_2]-[e_1,{D}_{\lambda_0}(e_2)]-\widehat{\dM}{D}_{\lambda_2}(e_1,e_2)\\
=&{D}_{\lambda_0}[\sigma_0(x)+u,\sigma_0(y)+v]-[{D}_{\lambda_0}(\sigma_0(x)+u),\sigma_0(y)+v]\\
&-[\sigma_0(x)+u,{D}_{\lambda_0}(\sigma_0(y)+v)]-\widehat{\dM}{D}_{\lambda_2}(\sigma_0(x)+u,\sigma_0(y)+v)\\
=&\lambda_0[x,y]-r_0(y)(\lambda_0(x))-l_0(x)(\lambda_0(y))-\dM_V\lambda_2(x,y)\\
=&0.
\end{align*}
Similarly, we have

$${D}_{\lambda_1}[e_1,h_1]-[{D}_{\lambda_0}(e_1),h_1]-[e_1,{D}_{\lambda_1}(h_1)]-{D}_{\lambda_2}(e_1,\widehat{\dM}(h_1))=0,$$
$${D}_{\lambda_1}[h_1,e_1]-[{D}_{\lambda_1}(h_1),e_1]-[h_1,{D}_{\lambda_0}(e_1)]-{D}_{\lambda_2}(\widehat{\dM}(h_1),e_1)=0.$$
Next
\begin{align*}
&{D}_{\lambda_1}(\widehat{l_3}(e_1,e_2,e_2))-\widehat{l_3}({D}_{\lambda_0}(e_1),e_2,e_3)-\widehat{l_3}(e_1,{D}_{\lambda_0}(e_2),e_3)-\widehat{l_3}(e_1,e_2,{D}_{\lambda_0}(e_3))\\
=&{D}_{\lambda_1}(\widehat{l_3}(\sigma_0(x)+u,\sigma_0(y)+v,\sigma_0(z)+w))-\widehat{l_3}({D}_{\lambda_0}(\sigma_0(x)+u),\sigma_0(y)+v,\sigma_0(z)+w)\\
&-\widehat{l_3}(\sigma_0(x)+u,{D}_{\lambda_0}(\sigma_0(y)+v),\sigma_0(z)+w)-\widehat{l_3}(\sigma_0(x)+u,\sigma_0(y)+v,{D}_{\lambda_0}(\sigma_0(z)+w))\\
=&\lambda_0(l^{\g}_3(x,y,x))+r_2(y,z)(\lambda_0(x))+m_2(x,z)(\lambda_0(y))+l_2(x,y)(\lambda_0(z))\\
=&\lambda_2(x,[y,z])-\lambda_2([x,y],z)-\lambda_2(y,[x,z])\\
&~+l_0(x)(\lambda_2(y,z))-r_0(z)(\lambda_2(x,y))-l_0(y)(\lambda_2(x,z))\\
=&{D}_{\lambda_2}(\sigma_0(x)+u, \omega(y,z)+\sigma_0[y,z]+l_0(y)(w)+r_0(z)(v))\\
&~-{D}_{\lambda_2}(\omega(x,y)+\sigma_0[x,y]+l_0(x)(v)+r_0(y)(u), \sigma_0(z)+w)\\
&~-{D}_{\lambda_2}(\sigma_0(y)+v, \omega(x,z)+\sigma_0[x,z]+l_0(x)(w)+r_0(z)(y))\\
&~+[\sigma_0(x)+u,\lambda_2(y,z)]-[\lambda_2(x,y),\sigma_0(z)+w]-[\sigma_0(y)+v,\lambda_2(x,z)]\\
=&{D}_{\lambda_2}(\sigma_0(x)+u, [\sigma_0(y)+v,\sigma_0(z)+w])- {D}_{\lambda_2}([\sigma_0(x)+u,\sigma_0(y)+v],\sigma_0(z)+w)\\
 &~- {D}_{\lambda_2}(\sigma_0(y)+v,[\sigma_0(x)+u,\sigma_0(z)+w])
+ [\sigma_0(x)+u, {D}_{\lambda_2}(\sigma_0(y)+v,\sigma_0(z)+w)] \\
&~- [{D}_{\lambda_2}(\sigma_0(x)+u,\sigma_0(y)+v),\sigma_0(z)+w] - [\sigma_0(y)+v, {D}_{\lambda_2}(\sigma_0(x)+u,\sigma_0(z)+w)]\\
=&{D}_{\lambda_2}(e_1, [e_2,e_3])- {D}_{\lambda_2}([e_1,e_2],e_3) - {D}_{\lambda_2}(e_2,[e_1,e_3])+ [e_1, {D}_{\lambda_2}(e_2,e_3)] \\
  &- [{D}_{\lambda_2}(e_1,e_2), e_3] - [e_2,{D}_{\lambda_2}(e_1,e_3)]
\end{align*}
Thus ${D}_\lambda$ is a derivation of Leibniz 2-algebras of $\hg$.

Clearly, $\Phi ({D}_{\lambda_0})=(1_{V_0},1_{\g_0})$, $\Phi ({D}_{\lambda_1})=(1_{V_1},1_{\g_1})$, $\Phi ({D}_{\lambda_2})=(0,0)$, and hence $\psi({D})={D}_\lambda \in \Der^{V,\fg}(\hg)$.
Let $\lambda=(\lambda_0,\lambda_1,\lambda_2), \lambda'=(\lambda'_0 ,\lambda'_1 ,\lambda'_1) \in {Z}^1(\fg,V)$. Then
\begin{eqnarray*}
{D}_{\lambda_0+\lambda'_0}\big(\si_0(x)+v\big) & = &\lambda_0(x)+\lambda'_0(x)\\
& = & {D}_{\lambda'_0}\big(\si_0(x)+\lambda_0(x)\big)\\
& = & {D}_{\lambda'_0}\big({D}_{\lambda_0}(\si_0(x)+v)\big).
\end{eqnarray*}
Similarly, we have
\begin{align*}
{D}_{\lambda_1+\lambda'_1}\big(\si_1(a)+m\big)=&~{D}_{\lambda'_1}\big({D}_{\lambda_1}(\si_1(a)+m)\big).\\
{D}_{\lambda_1+\lambda'_2}\big(\si_0(x)+u,\si_0(y)+v\big)=&~{D}_{\lambda'_2}\big({D}_{\lambda_0}(\si_0(x)+u),{D}_{\lambda_0}(\si_0(y)+v)\big).
\end{align*}

Thus $\psi$ is a group homomorphism. It is easy to see that $\psi$ is injective. Finally, we show that $\psi$ is surjective onto $\Der^{V,\fg}(\hg)$.

Let ${D} \in \Der^{V,\fg}(\hg)$. Since $\overline{{D}}=(1_{\g_0},1_{\g_1},0)$, we have
 $$p_0 \big({D}_0 \si_0(x)\big)=x=p_0\big(\si_0(x)\big),~p_1\big({D}_1 \si_0(a)\big)=a=p_1\big(\si_1(a)\big),$$
 $$p_1\big({D}_2(\si_0(x),\si_0(y)\big)=0 ~~\text{for all}~ x,y \in \fg_0,~a \in \g_1.$$
 This implies that
 $${D}_0\big(\si_0(x)\big)= \lambda_0(x)+\si_0(x),~{D}_1\big(\si_1(a)\big)= \lambda_1(a)+\si_1(a),~{D}_2\big(\si_0(x),\si_0(y)\big)= \lambda_2(x,y)$$
 for some element $\lambda_0(x),\lambda_1(a),\lambda_2(x,y) \in V$. We claim that $\lambda \in {Z}^1(\fg,V)$. Since ${D}$ and $\si$ are linear maps, it follows that $\lambda:\fg \to V$ is also linear. Since ${D}$ is a derivation of Leibniz 2-algebras, we have
\begin{eqnarray*}
{D}_0\widehat{\dM}(\si_1(a)) & = & \lambda_0(\dM_{\g}(a))\\.
\end{eqnarray*}
On the other hand, we have

\begin{eqnarray*}
\widehat{\dM}{D}_1(\si_1(a))  =\dM_V\lambda_1(a)\\.
\end{eqnarray*}
Equating these two expressions, we obtain
$$\dM_\frkh \lambda_1(a)-\lambda_0 \dM_\g (a)=0.$$

Then, we have
\begin{align*}
&{D}_{0}[\si_0(x),\si_0(y)]_\g-[{D}_{0}(\si_0(x)),\si_0(y)]-[\si_0(x),{D}_{0}(\si_0(y))]\\
=&{D}_{0}(\omega(x,y)+\si_0[x,y])-[\lambda_0{(x)},\si_0(y)]-[\si_0(x),\lambda_0{(y)}]\\
=&\lambda_0[x,y]-l_0(x)(\lambda_0(y))-r_0(y)(\lambda_0(x)).
\end{align*}
On the other hand, we have
\begin{align*}
\widehat{\dM}{D}_{2}(\si_0(x),\si_0(y))=\widehat{\dM}\lambda_2(x,y)=\dM_V\lambda_2(x,y).
\end{align*}
Equating these two expressions, we obtain
$$l_0(x)\lambda_0(y)+r_0(y)\lambda_0(x)-\lambda_0[x,y]_\g+\dM_\frkh\circ \lambda_2(x,y)=0.$$

Similarly, we obtain
\begin{align*}
&l_0(x)\lambda_1(a)+r_1(a)\lambda_0(x)-\lambda_1[x,a]_\g+\lambda_2(x,\dM_\g (a))=0,\\
&l_1(a)\lambda_1(x)+r_0(x)\lambda_0(a)-\lambda_1[a,x]_\g+\lambda_2(\dM_\g (a),x)=0.
\end{align*}

Next, we get
\begin{align*}
&{D}_1(\widehat{l_3}(\si_0(x),\si_0(y),\si_0(z)))-\widehat{l_3}({D}_0(\si_0(x)),\si_0(y),\si_0(z))\\
&-\widehat{l_3}(\si_0(x),{D}_0(\si_0(y)),\si_0(z))-\widehat{l_3}(\si_0(x),\si_0(y),{D}_0(\si_0(z)))\\
=&{D}_1(\si_1 l_3(x,y,x)+\theta(x,y,z))-\widehat{l_3}(\lambda_0(x),\si_0(y),\si_0(z))\\
&-\widehat{l_3}(\si_0(x),\lambda_0(y),\si_0(z))-\widehat{l_3}(\si_0(x),\si_0(y),\lambda_0(z))\\
=&\lambda_0(l_3(x,y,x))+l_2(x,y)(\lambda_0(z))+r_2(y,z)(\lambda_0(x))+m_2(x,z)(\lambda_0(y)).
\end{align*}
On the other hand, we have
\begin{align*}
&{D}_2(\si_0(x), [\si_0(y),\si_0(z)])- {D}_2([\si_0(x),\si_0(y)],\si_0(z))- {D}_2(\si_0(y),[\si_0(x),\si_0(z)])\\
 &+ [\si_0(x), {D}_2(\si_0(y),\si_0(z))] - [{D}_2(\si_0(x),\si_0(y)), \si_0(z)]- [\si_0(y),{D}_2(\si_0(x),\si_0(z))]\\
=&\lambda_2(x,[y,z])-\lambda_2([x,y],z)-\lambda_2(y,[x,z])+l_0(x)(\lambda_2(y,z))-r_0(z)(\lambda_2(x,y))-l_0(y)(\lambda_2(x,z))
\end{align*}
Equating these two expressions, we obtain
\begin{align*}
&l_0(x)\lambda_2(y,z)-r_0(z)\lambda_2(x,y)-l_0(y)\lambda_2(x,z)-\lambda_1(l_3^\g(x,y,z)-\rho^L_{2}(x,y)(\lambda_0(z))\\
&-\rho_2^M(x,z)(\lambda_0(y))-\rho^R_{2}(y,z)(\lambda_0(x))+\lambda_2(x, [y,z])- \lambda_2([x,y],z) - \lambda_2(y,[x,z])=0.
\end{align*}
Hence $\lambda=(\lambda_0,\lambda_1,\lambda_2) \in {Z}^1(\fg,V)$. This completes the proof of the proposition.
\end{proof}
For each $(\beta, \alpha) \in D_\rho(\fg,V)$, we define
$$\Psi: D_\rho(\fg,V)\to \End (Z^2(\fg,V))$$
by
\begin{eqnarray*}
\Psi(\beta, \alpha)(\psi)(a)&=&\beta_0\psi(a)-\psi\alpha_1(a),\\
\Psi(\beta, \alpha)(\omega)(x,y)& = & \beta_0\omega(x,y)-\omega(\alpha_0(x),y)-\omega(x,\alpha_0(y)),\\
\Psi(\beta, \alpha)(\mu)(x,a)& = & \beta_1\mu(x,a)-\mu(\alpha_0(x),a)-\mu(x,\alpha_1(a)),\\
\Psi(\beta, \alpha)(\nu)(a,x)&=&\beta_1\nu(a,x)-\nu(\alpha_1(a),x)-\nu(a,\alpha_0(x)),\\
\Psi(\beta,\alpha)(\theta,\mu,\nu)(x,y,z)&=&\beta_0\theta(x,y,z)-\theta(\alpha_0(x),y,z)-\theta(x,\alpha_0(y),z)\\
&&-\theta(x,y,\alpha_0(z))-\mu(x,\alpha_2(y,z))+\nu(\alpha_2(x,y),z)\\
&&+\mu(y,\alpha_2(x,z)),
\end{eqnarray*}
\begin{cor}
Let $\mathcal{E}:0 \to V \to\hg \to \fg \to 0$ be an abelian extension of Lie algebras. Let $\rho: \fg \to  \End(V)$ denote the induced $\fg$-module structure on $V$. this defines a map
$$\varpi_{\mathcal{E}}: C_\rho(\fg,V) \to H^2(\fg,V)$$
given by
$$\varpi_{\mathcal{E}} (\beta, \alpha)= [\Psi(\beta, \alpha){(\psi,\omega,\mu,\nu,\theta)}].$$
\end{cor}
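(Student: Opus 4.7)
The plan is to verify two claims: first, that $\Psi(\beta,\alpha)$ carries $Z^2(\fg,V)$ into itself, so that the bracket $[\Psi(\beta,\alpha)(\psi,\omega,\mu,\nu,\theta)]$ makes sense as an element of $H^2(\fg,V)$; second, that this cohomology class depends only on the cohomology class of $(\psi,\omega,\mu,\nu,\theta)$, so that no ambiguity is introduced by a change of splitting $\sigma$ of the extension $\mathcal{E}$. Together these give a well-defined map $\varpi_{\mathcal{E}}\colon D_\rho(\fg,V)\to H^2(\fg,V)$.

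For the first claim, I would take a 2-cocycle $(\psi,\omega,\mu,\nu,\theta)$ satisfying equations \eqref{eq:coc01}--\eqref{eq:coc08} and apply $D_2$ componentwise to $\Psi(\beta,\alpha)(\psi,\omega,\mu,\nu,\theta)$. The key input is that $\alpha=(\alpha_0,\alpha_1,\alpha_2)$ is a derivation of $\fg$ in the sense of Definition \ref{Def:derivation} and $\beta=(\beta_0,\beta_1)$ is a derivation of $V$, together with the compatibility identities \eqref{COCY6}--\eqref{COCY14}. Each of the seven equations defining $Z^2$ is linear in the cochain, so applying $\Psi(\beta,\alpha)$ produces three families of terms: those in which $\beta$ acts outside a component of the cocycle, those in which $\alpha_0$ or $\alpha_1$ acts in a single slot of a component, and extra terms coming from the $\alpha_2$-correction in \eqref{COCY12}--\eqref{COCY14}. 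The compatibility identities are exactly what is needed to move the $\beta$'s past the representation maps $l_0, r_0, l_1, r_1, l_2, m_2, r_2$, turning them into $\alpha_0$-- or $\alpha_1$--actions inside the cocycle equation; the derivation property of $\alpha$ rewrites expressions like $\alpha_0[x,y]$ as $[\alpha_0 x,y]+[x,\alpha_0 y] + \dM_V\alpha_2(x,y)$. After collecting, the remaining expression is the original cocycle identity, with each of its lines ``differentiated'' by the pair $(\beta,\alpha)$, hence zero.

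For the second claim I would show that $\Psi(\beta,\alpha)$ sends $B^2(\fg,V)$ into $B^2(\fg,V)$. Given $(\psi,\omega,\mu,\nu,\theta)=D_1(\phi,\varphi,\chi)$, a direct computation using the derivation property of $\alpha$ and the compatibility \eqref{COCY6}--\eqref{COCY14} shows that
\[
\Psi(\beta,\alpha)D_1(\phi,\varphi,\chi)=D_1\bigl(\beta_0\phi-\phi\alpha_0,\ \beta_1\varphi-\varphi\alpha_1,\ \beta_1\chi-\chi(\alpha_0\otimes\mathrm{id})-\chi(\mathrm{id}\otimes\alpha_0)\bigr),
\]
which is again a coboundary. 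Hence $\Psi(\beta,\alpha)$ descends to an endomorphism of $H^2(\fg,V)$ and the assignment $(\beta,\alpha)\mapsto [\Psi(\beta,\alpha)(\psi,\omega,\mu,\nu,\theta)]$ is well-defined. Independence from the choice of section $\sigma$ follows from Proposition \ref{pro:2-modules2} and Theorem \ref{mainthm44}: a different section changes $(\psi,\omega,\mu,\nu,\theta)$ by a coboundary, and by the previous step $\Psi(\beta,\alpha)$ transports coboundaries to coboundaries.

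The main obstacle I expect is the verification of compatibility with the cocycle condition \eqref{eq:coc08}, which is the most intricate of the $Z^2$ equations because it involves $\theta$, $\mu$, $\nu$, $\omega$ and all three products $l_2, m_2, r_2$ simultaneously. The identities \eqref{COCY12}--\eqref{COCY14} each contribute an $\alpha_2$-type correction, and one must check that the various $\alpha_2$-contributions assemble precisely into the missing derivation-on-$l_3^\g$ terms, so that the Jacobiator-type identity in Definition \ref{defi:Leibniz2}(h) -- already encoded in \eqref{eq:coc08} -- is differentiated correctly. Once this bookkeeping is done, all other cocycle equations follow by shorter versions of the same argument.
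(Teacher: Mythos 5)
Your overall architecture is the right one (and goes beyond the paper, which states this corollary without any proof): one must check that $\Psi(\beta,\alpha)$ carries $Z^2(\fg,V)$ into itself so that the cohomology class makes sense, and that it carries $B^2(\fg,V)$ into itself so that the class does not depend on the splitting. However, the first claim fails with the formulas as given, and the computation you outline would expose this rather than confirm it. Test the transported cochain against the cocycle identity \eqref{eq:coc01}: writing $\psi'=\Psi(\beta,\alpha)(\psi)$, $\omega'=\Psi(\beta,\alpha)(\omega)$, $\mu'=\Psi(\beta,\alpha)(\mu)$, and using $\alpha_1[x,a]_\g=[\alpha_0x,a]_\g+[x,\alpha_1a]_\g+\alpha_2(x,\dM_\g a)$ (condition $(o)$ of Definition \ref{Def:derivation}) together with \eqref{COCY6} and $\beta_0\dM_V=\dM_V\beta_1$, the expression $l_0(x)\psi'(a)-\psi'([x,a]_\g)+\omega'(x,\dM_\g a)-\dM_V\mu'(x,a)$ collapses to three instances of \eqref{eq:coc01} (each zero) plus the single uncancelled term $\psi(\alpha_2(x,\dM_\g a))$. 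Nothing in the stated definition of $\Psi(\beta,\alpha)(\omega)$ or $\Psi(\beta,\alpha)(\mu)$ absorbs it, so $\Psi(\beta,\alpha)$ does not preserve $Z^2(\fg,V)$ unless the $\omega$-component is amended by the correction $-\psi(\alpha_2(x,y))$ --- exactly the term that already appears on the right-hand side of \eqref{COCY2} in Theorem \ref{main-thm1}, parallel to the $\mu,\nu\circ\alpha_2$ corrections that are built into the $\theta$-component. Your sentence ``after collecting, the remaining expression is the original cocycle identity, hence zero'' is precisely where the argument breaks; the same defect appears in \eqref{eq:coc02}.

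The omission propagates to your second claim: the proposed image $1$-cochain is missing a $-\varphi\circ\alpha_2$ term in its third component, and without it (and without the corrected $\omega$-component of $\Psi$) the asserted identity $\Psi(\beta,\alpha)D_1(\phi,\varphi,\chi)=D_1(\cdots)$ fails by the term $(\dM_V\varphi-\phi\dM_\g)(\alpha_2(x,y))$, which is not zero for a general $1$-cochain. A further small slip: the derivation identity reads $\alpha_0[x,y]_\g=[\alpha_0x,y]_\g+[x,\alpha_0y]_\g+\dM_\g\alpha_2(x,y)$, with $\dM_\g$ rather than $\dM_V$. The remedy is to first redefine $\Psi(\beta,\alpha)$ with the complete set of $\alpha_2$-corrections dictated by \eqref{COCY1}--\eqref{COCY5}, and then rerun both verifications; with those corrections in place your two-step strategy does go through.
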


This $\varpi_{\mathcal{E}}$ is called the {\bf Wells map}.  We show that there is a left action of $ {D}_\rho(\fg,V)$ on $H^2(\fg,V)$ with respect to which $\varpi_{\mathcal{E}}$ is an inner derivation. Hence $\varpi_{\mathcal{E}}=0$ if and only if this action of $ {D}_\rho(\fg,V)$ on $H^2(\fg,V)$ is trivial.

\begin{prop}
Let $\mathcal{E}$ be an extension inducing $\rho$. Then $\varpi_{\mathcal{E}}$ is an inner derivation with respect to the action of $ {D}_\rho(\fg,V)$ on $H^2(\fg,V)$.
\end{prop}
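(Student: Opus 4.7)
The plan is to mirror the automorphism argument, replacing the group-theoretic ``conjugation minus identity'' by the Lie-theoretic ``act on a fixed class.'' The map $\Psi(\beta,\alpha)$ introduced in the preceding corollary should be promoted to a well-defined left action of $D_\rho(\fg,V)$ on $H^2(\fg,V)$, after which the proposition reduces to an unpacking of the definition of $\varpi_{\mathcal{E}}$. Concretely, two preservation statements must be checked: $\Psi(\beta,\alpha)$ sends $Z^2(\fg,V)$ to itself, and it sends $B^2(\fg,V)$ to itself.

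For the cocycle preservation, I would substitute $\Psi(\beta,\alpha)(\psi,\omega,\mu,\nu,\theta)$ into each of the 2-cocycle identities \eqref{eq:coc01}--\eqref{eq:coc08}. The $\beta$-contributions are absorbed via the compatibility relations \eqref{COCY6}--\eqref{COCY14}, which let one pull $\beta_0$ and $\beta_1$ through $l_0,r_0,l_1,r_1,l_2,m_2,r_2$ at the cost of correction terms involving $\alpha_0,\alpha_1,\alpha_2$. The $\alpha$-contributions are controlled by the Leibniz derivation identities (n)--(q) of Definition \ref{Def:derivation}, which rewrite an expression such as $\omega(\alpha_0(x),y)+\omega(x,\alpha_0(y))$ as $\omega([\cdot,\cdot])\circ\alpha$ up to $\dM_\g\alpha_2$-terms. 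A careful accounting (parallel to the computations in the proof of Theorem \ref{main-thm1}) shows that every unmatched term cancels and one is left with a $\beta$-scaled copy of the original 2-cocycle identity, giving $\Psi(\beta,\alpha)(\psi,\omega,\mu,\nu,\theta)\in Z^2(\fg,V)$.

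For the coboundary preservation, if $(\psi,\omega,\mu,\nu,\theta)=D_1(\lambda_0,\lambda_1,\lambda_2)$ for some $\lambda\in C^1(\fg,V)$, I would verify by direct expansion the formula
\[ \Psi(\beta,\alpha)\,D_1(\lambda_0,\lambda_1,\lambda_2)=D_1\bigl(\beta_0\lambda_0-\lambda_0\alpha_0,\ \beta_1\lambda_1-\lambda_1\alpha_1,\ \beta_1\lambda_2-\lambda_2(\alpha_0\otimes\id)-\lambda_2(\id\otimes\alpha_0)\bigr), \]
which again uses only \eqref{COCY6}--\eqref{COCY14} and the derivation identities for $\alpha$. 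Combining the two steps, the rule $(\beta,\alpha)\cdot[c]:=[\Psi(\beta,\alpha)\,c]$ descends to a well-defined left action of $D_\rho(\fg,V)$ on $H^2(\fg,V)$.

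With this action in hand the proposition is immediate: by the definition of the Wells map in the preceding corollary,
\[ \varpi_{\mathcal{E}}(\beta,\alpha)=\bigl[\Psi(\beta,\alpha)(\psi,\omega,\mu,\nu,\theta)\bigr]=(\beta,\alpha)\cdot\bigl[(\psi,\omega,\mu,\nu,\theta)\bigr], \]
so $\varpi_{\mathcal{E}}$ equals the inner derivation of $D_\rho(\fg,V)$ into $H^2(\fg,V)$ associated to the fixed cohomology class $[(\psi,\omega,\mu,\nu,\theta)]$. I expect the main obstacle to be the cocycle check for the top identity \eqref{eq:coc08}: this is the only place where $\alpha_2$ and $\theta$ interact nontrivially, and the mixed contributions produced by $\Psi(\beta,\alpha)\theta$ (the terms $\mu(\cdot,\alpha_2(\cdot,\cdot))$, $\nu(\alpha_2(\cdot,\cdot),\cdot)$ and the $l_2,m_2,r_2$ pieces) must be balanced against the Jacobiator-type terms in \eqref{eq:coc08} by systematic use of the derivation identity (q) for $\alpha$ together with \eqref{COCY12}--\eqref{COCY14}.
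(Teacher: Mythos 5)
Your proposal is correct and follows the same route as the paper: identify $\varpi_{\mathcal{E}}(\beta,\alpha)$ with the value at $(\beta,\alpha)$ of the inner derivation attached to the fixed class $[(\psi,\omega,\mu,\nu,\theta)]$ under the action $\Psi$. In fact you supply more than the paper does --- its proof merely rewrites the defining formulas for $\Psi(\beta,\alpha)$ on each component of the cocycle and asserts the conclusion, whereas you correctly isolate the genuine content, namely that the compatibility conditions ensure $\Psi(\beta,\alpha)$ preserves $Z^2(\fg,V)$ and $B^2(\fg,V)$ so that the action descends to $H^2(\fg,V)$.
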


\begin{proof}
Let $\mathcal{E}$ be an extension inducing $\rho$ and $\varpi_{\mathcal{E}}:  {D}_\rho(\fg,V) \to H^2(\fg,V)$ be the Wells map. Then for $(\beta, \alpha)$ in $C_\rho(\fg,V)$, we have

\begin{eqnarray*}
\psi_{(\beta, \alpha)}(a)&=&\beta_0\psi(a)-\psi\alpha_1(a),\\
\omega_{(\psi, \phi)}(x, y) & = & \beta_0\omega(x,y)-\omega(\alpha_0(x),y)-\omega(x,\alpha_0(y)),\\
\mu_{(\beta, \alpha)}(x,a)& = & \beta_1\mu(x,a)-\mu(\alpha_0(x),a)-\mu(x,\alpha_1(a)),\\
\nu_{(\beta, \alpha)}(a,x)&=&\beta_1\nu(a,x)-\nu(\alpha_1(a),x)-\nu(a,\alpha_0(x)),\\
(\theta,\mu,\nu)_{(\beta, \alpha)}(x,y,z)&=&\beta_0\theta(x,y,z)-\theta(\alpha_0(x),y,z)-\theta(x,\alpha_0(y),z)-\theta(x,y,\alpha_0(z))\\
&&-\mu(x,\alpha_2(y,z))+\nu(\alpha_2(x,y),z)+\mu(y,\alpha_2(x,z)),
\end{eqnarray*}
for all $x,y,z \in \g_0$, $a \in \g_1$.
Hence $\varpi_{\mathcal{E}}$ is an inner derivation with respect to the action of $ {D}_\rho(\fg,V)$ on $H^2(\fg,V)$.
\end{proof}

\begin{lem}\label{keylemma11}
Let  $(\beta, \alpha) \in  {D}_\rho(\fg,V)$. Then $(\beta, \alpha)$ is inducible  if and only if $(\beta, \alpha)$ is in the kernel of Wells map $\ker \varpi_{\mathcal{E}}$.
\end{lem}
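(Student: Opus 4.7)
The plan is to mirror the proof of Lemma \ref{keylemma2} from the automorphism case, exploiting Theorem \ref{main-thm1} which supplies the precise cocycle-type conditions characterizing inducibility of a derivation pair. The key observation is that the five conditions (\ref{COCY1})--(\ref{COCY5}) in Theorem \ref{main-thm1} are exactly what it means for $\Psi(\beta,\alpha)(\psi,\omega,\mu,\nu,\theta)$ to equal $\partial^{1}(\lambda)$ for a suitable $\lambda=(\lambda_{0},\lambda_{1},\lambda_{2})\in C^{1}(\fg,V)$, while the compatibility conditions (\ref{COCY6})--(\ref{COCY14}) are already built into the assumption $(\beta,\alpha)\in D_{\rho}(\fg,V)$.

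For the forward direction, suppose $(\beta,\alpha)$ is inducible, so there exists $D=(D_{0},D_{1},D_{2})\in\Der_{V}(\hg)$ with $\Phi(D)=(\beta,\alpha)$. As in the automorphism argument, the identities $p_{0}\si_{0}=\id_{\fg_{0}}$, $p_{1}\si_{1}=\id_{\fg_{1}}$ guarantee that
\[
\lambda_{0}(x)=D_{0}\si_{0}(x)-\si_{0}\alpha_{0}(x),\quad \lambda_{1}(a)=D_{1}\si_{1}(a)-\si_{1}\alpha_{1}(a),\quad \lambda_{2}(x,y)=D_{2}(\si_{0}(x),\si_{0}(y))-\si_{1}\alpha_{2}(x,y)
\]
take values in $V$. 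Then Theorem \ref{main-thm1} applied to $D$ yields precisely equations (\ref{COCY1})--(\ref{COCY5}). Reading those equations as the assertion that
$
\Psi(\beta,\alpha)(\psi,\omega,\mu,\nu,\theta)=\partial^{1}(\lambda),
$
we conclude that the class $[\Psi(\beta,\alpha)(\psi,\omega,\mu,\nu,\theta)]$ vanishes in $H^{2}(\fg,V)$, i.e.\ $\varpi_{\mathcal{E}}(\beta,\alpha)=0$.

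For the converse, suppose $(\beta,\alpha)\in D_{\rho}(\fg,V)$ lies in $\ker\varpi_{\mathcal{E}}$. By definition, the $2$-cocycle $\Psi(\beta,\alpha)(\psi,\omega,\mu,\nu,\theta)$ is a coboundary; pick $\lambda=(\lambda_{0},\lambda_{1},\lambda_{2})\in C^{1}(\fg,V)$ with $\Psi(\beta,\alpha)(\psi,\omega,\mu,\nu,\theta)=\partial^{1}(\lambda)$. Expanding this equality component-by-component recovers exactly conditions (\ref{COCY1})--(\ref{COCY5}). Since $(\beta,\alpha)\in D_{\rho}(\fg,V)$ already satisfies the compatibility conditions (\ref{COCY6})--(\ref{COCY14}), the full hypothesis list of Theorem \ref{main-thm1} is met, and that theorem produces a derivation $D\in\Der_{V}(\hg)$ with $\Phi(D)=(\beta,\alpha)$, showing $(\beta,\alpha)$ is inducible.

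The only real bookkeeping hurdle is making sure the expansion of $\partial^{1}(\lambda)$ against the five components $(\psi,\omega,\mu,\nu,\theta)$ matches the five equations (\ref{COCY1})--(\ref{COCY5}) term for term; the $l_{3}$-component (\ref{COCY5}) is the one that requires the most care because it mixes $\lambda_{1}\circ l_{3}^{\fg}$ with the three flavors $l_{2},m_{2},r_{2}$ of the representation and with $\alpha_{2}$. This is the same combinatorial matching already carried out in the proof of Theorem \ref{main-thm1}, so no new computation is needed; one simply cites that correspondence and invokes the theorem in both directions.
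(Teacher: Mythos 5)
Your proof follows the same route as the paper's: in the forward direction you build $\lambda$ from $D$ and the splitting exactly as the paper does, invoke Theorem \ref{main-thm1} to obtain conditions (\ref{COCY1})--(\ref{COCY5}), and read these as the identity $\Psi(\beta,\alpha)(\psi,\omega,\mu,\nu,\theta)=\partial^1(\lambda)$, so that the class vanishes. You in fact go slightly further than the paper, whose proof writes out only the implication ``inducible $\Rightarrow$ in $\ker\varpi_{\mathcal{E}}$''; your converse (coboundary $\Rightarrow$ conditions (\ref{COCY1})--(\ref{COCY5}) for some $\lambda$ $\Rightarrow$ inducible via the sufficiency half of Theorem \ref{main-thm1}) is the intended and correct completion of that argument.
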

\begin{proof}
Suppose $(\beta, \alpha)$ is inducible, then there exists a ${D}=({D}_0,{D}_1,{D}_2) \in \Der_V(\hg)$ such that $\Phi({D})=(\beta, \alpha)$. Let $\si=(\si_0,\si_1): \fg \to \hg$ be sections. Then any element of $\hg$ can be written uniquely as $\si_0(x)+v$, $\si_1(a)+m$ for some $v\in V_0$, $m\in V_1$ and $x \in \fg_0$, $a\in \g_1$. Now,
$$\alpha_0(x)=p_0 {D}_0 \si_0(x),~\alpha_1(a)=p_1 {D}_1 \si_1(a),~\alpha_2(x,y)=p_1 {D}_2 (\si_0(x),\si_0(y)).$$
 This implies
 $$p_0 \si_0 \alpha_0(x)=p_0 {D}_0 \si_0(x),~p_1 \si_1\alpha_1(a)=p_1 {D}_1 \si_1(a),~p_1\si_1\alpha_2(x,y)=p_1 {D}_2 (\si_0(x),\si_0(y)).$$
 And hence we obtain a linear map $\lambda \in \Hom(\fg,V)$ defined by
\begin{align*}
\lambda_0(x)=&~{D}_0 \sigma_0(x)-\sigma_0\alpha_0(x),\\
\lambda_1(a)=&~{D}_1 \sigma_1(a)-\sigma_1\alpha_1(a),\\
\lambda_2(x,y)=&~{D}_2(\sigma_0(x),\sigma_0(y))-\sigma_1\alpha_2(x,y),
\end{align*}

By the main Theorem of last section, we get
\begin{align*}
&\beta_0(\psi (a)) - \psi (\alpha_1(a)) = \dM_{V}(\lambda_1(a))-\lambda_0 (\dM_{\g}(a)),\\
\\
&\beta_0 (\omega (x, y)) - \omega (\alpha_0(x),y)-\omega(x, \alpha_0(y)) = l_0(x) (\lambda_0(y)) + r_0(y) (\lambda_0(x))  - \lambda_0 [x, y] +\dM_{V}\lambda_2(x,y)\\
&+\psi(\alpha_2(x,y)),\\
\\
&\beta_1 (\mu (x, a)) - \mu (\alpha_0(x), a) - \mu (x, \alpha_1(a))= l_0(x)( \lambda_1(a)) + r_1(a)( \lambda_0(x) ) - \lambda_1 [x, a]+\lambda_2(x,\dM_{\g}(a)),\\
\\
&\beta_1 (\nu (a, x)) - \nu (\alpha_1(a), x)- \nu (a, \alpha_0(x)) = r_0(x)( \lambda_1(a)) + l_1(a)( \lambda_0(x) ) - \lambda_1 [a, x]+\lambda_2(\dM_{\g}(a),x),\\
\\
&\beta_1\theta(x,y,z)-\theta(\alpha_0(x),y,z)-\theta(x,\alpha_0(y),z)-\theta(x,y,\alpha_0(z))
=-r_2(y,z)(\lambda_0(x))-m_2(x,z)(\lambda_0(y))\\
&-l_2(x,y)(\lambda_0(z))+\lambda_2(x,[y,z])-\lambda_2([x,y],z)-\lambda_2(y,[x,z])
+l_0(\alpha_0(x))(\lambda_2(y,z))+r_1(\alpha_2(y,z))(\lambda_0(x))\\
&-r_0(\alpha_0(z))(\lambda_2(x,y))
-l_1(\alpha_2(x,y))(\lambda_0(z))-l_0(\alpha_0(y))(\lambda_2(x,z))-r_1(\alpha_2(x,z))(\lambda_0(y)).
\end{align*}
Thus $\Psi(\beta, \alpha){(\psi,\omega,\mu,\nu,\theta)}=\partial^1 (\lambda)$ and
$$\varpi_{\mathcal{E}} (\beta, \alpha)= [\Psi(\beta, \alpha){(\psi,\omega,\mu,\nu,\theta)}]=0.$$
Then $(\beta,\alpha)$ is in the kernel of Wells map $\ker \varpi_{\mathcal{E}}$.
\end{proof}

\begin{lem}
The image of  $\Phi$ is equal to the kernel of  the Wells map $\varpi_{\mathcal{E}}$, i.e. $\operatorname{im}\Phi=\operatorname{ker} \varpi_{\mathcal{E}}$.
\end{lem}

\begin{proof}
One note that $ (\beta,\alpha) \in \operatorname{im} \Phi$ if and only if $ (\beta,\alpha)$ is inducible.
Then by the above Lemma \ref{keylemma11} we obtain the result.
\end{proof}

By Theorem \ref{main-thm1}, the pair $(\beta,\alpha) \in  {D}_\rho(\fg,V)$ is inducible if and only if $(\psi,\omega,\mu,\nu,\theta)_{(\beta,\alpha)}$ is a 2-coboundary. Thus $[(\psi,\omega,\mu,\nu,\theta)_{(\beta,\alpha)}] \in H^2(\fg,V)$ is an obstruction to inducibility of the pair $(\beta,\alpha) \in D_\rho(\fg,V)$. With the preceding discussion, we have derived the following exact sequence of groups associated to an abelian extension of Lie algebras and proving Theorem \ref{wells-sequence1}.

\begin{thm}\label{wells-sequence1}
Let $\mathcal{E}:0 \to V \to\hg \to \fg \to 0$ be an abelian extension of Lie algebras. Let $\rho: \fg \to  \End(V)$ denote the induced $\fg$-module structure on $V$. Then there is an exact sequence
\begin{equation}\label{wells-seq1}
0 \to {Z}^1(\fg,V) \stackrel{i}{\longrightarrow} \Der_V(\hg) \stackrel{\Phi}{\longrightarrow} {D}_\rho(\fg,V) \stackrel{\varpi_{\mathcal{E}}}{\longrightarrow} H^2(\fg,V).
\end{equation}
\end{thm}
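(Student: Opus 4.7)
The plan is to verify exactness of \eqref{wells-seq1} at each of its three non-trivial positions by assembling pieces already established in this section; no substantive new calculations are needed beyond Lemma \ref{keylemma11} and the structural propositions preceding it.

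Exactness at $Z^1(\fg,V)$ is just the injectivity of $i$, which is immediate: $i$ is the composite of the isomorphism $Z^1(\fg,V)\cong\Der^{V,\fg}(\hg)$ proved in the preceding proposition with the inclusion $\Der^{V,\fg}(\hg)\hookrightarrow\Der_V(\hg)$, and $\lambda=(\lambda_0,\lambda_1,\lambda_2)$ is recovered from its associated derivation $D_\lambda$ by restriction to the splitting image $\sigma(\fg)$. Exactness at $\Der_V(\hg)$ requires $\ker\Phi=\Img i$, which is the same proposition: by the very definition $\ker\Phi=\Der^{V,\fg}(\hg)$, and this group is identified with $\Img i$ under that isomorphism.

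Exactness at $D_\rho(\fg,V)$, asserting $\Img\Phi=\ker\varpi_{\mathcal{E}}$, is handled in both directions by Theorem \ref{main-thm1}. For the forward inclusion, given $D\in\Der_V(\hg)$ with $\Phi(D)=(\beta,\alpha)$, set $\lambda_0(x)=D_0\sigma_0(x)-\sigma_0\alpha_0(x)$, $\lambda_1(a)=D_1\sigma_1(a)-\sigma_1\alpha_1(a)$, and $\lambda_2(x,y)=D_2(\sigma_0(x),\sigma_0(y))-\sigma_1\alpha_2(x,y)$; the identities \eqref{COCY1}--\eqref{COCY5} supplied by Theorem \ref{main-thm1} then read as the single equality $\Psi(\beta,\alpha)(\psi,\omega,\mu,\nu,\theta)=\partial^1(\lambda)$, so $\varpi_{\mathcal{E}}(\beta,\alpha)=0$. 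The reverse inclusion runs Theorem \ref{main-thm1} backwards: a coboundary witness $\lambda$ for $\Psi(\beta,\alpha)(\psi,\omega,\mu,\nu,\theta)$ supplies exactly the linear data required to build a derivation $D\in\Der_V(\hg)$ lifting $(\beta,\alpha)$.

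The main technical point requiring care is that $\varpi_{\mathcal{E}}$ descend to a well-defined map into $H^2(\fg,V)$, i.e., that $\Psi(\beta,\alpha)$ carry $B^2(\fg,V)$ into itself. This is precisely where the compatibility conditions \eqref{COCY6}--\eqref{COCY14} earn their keep: they permit $\beta$ to be commuted past the module operations $l_0,r_0,l_1,r_1,l_2,m_2,r_2$, so that if $(\psi,\omega,\mu,\nu,\theta)=\partial^1(\lambda')$ for some $\lambda'\in C^1(\fg,V)$ then $\Psi(\beta,\alpha)\partial^1(\lambda')$ equals $\partial^1$ of a composite assembled from $\beta$, $\lambda'$, and $\alpha$. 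Once this well-definedness is settled, the three exactness verifications above combine to produce the claimed exact sequence.
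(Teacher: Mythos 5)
Your proposal is correct and follows essentially the same route as the paper, which offers no separate argument for Theorem \ref{wells-sequence1} beyond assembling the preceding results: the isomorphism $Z^1(\fg,V)\cong\Der^{V,\fg}(\hg)=\ker\Phi$, the homomorphism property of $\Phi$ into $D_\rho(\fg,V)$, and Lemma \ref{keylemma11} together with the lemma $\operatorname{im}\Phi=\ker\varpi_{\mathcal{E}}$. Your explicit attention to the well-definedness of $\varpi_{\mathcal{E}}$ on cohomology classes via the compatibility conditions \eqref{COCY6}--\eqref{COCY14} is a point the paper treats only in the automorphism section, so making it explicit here is a welcome, if minor, improvement.
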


\section{Special case: crossed modules}
In this section, we consider the inducibility of a pair of automorphisms in an abelian extension of crossed modules over Leibniz algebras.

We provide a special case of the representation and cohomology theory of Leibniz 2-algebras as well as some applications.
A Leibniz 2-algebra with $l_3=0$ is called strict Leibniz 2-algebra.
A strict Leibniz 2-algebra $\g=\g_{{1}}\oplus \g_{0}$ consists of the following data:
\begin{itemize}
\item[$\bullet$] a complex of vector spaces $l_1=\dM:\g_{{1}}{\longrightarrow}\g_0,$ %\stackrel{\dM}

\item[$\bullet$] bilinear maps $l_2=[\cdot,\cdot]:\g_i\times \g_j\longrightarrow
\g_{i+j}$, where $0\leq i+j\leq 1$,
\end{itemize}
 such that for any $x,y,z,t\in \g_0$ and $a,b\in \g_{{1}}$, the following conditions are satisfied:
\begin{itemize}
\item[$\rm(a)$] $\dM [x,a]=[x,\dM a],$ %$f[x,a]=[x,f(a)],$
\item[$\rm(b)$] $\dM [a,x]=[\dM a,x],$ %$f[a,x]=[f(a),x],$
\item[$\rm(c)$] $[\dM a,b]=[a,\dM b],$  %$[f(a),b]=[a,f(b)],$
\item[$\rm(d)$]     $[x,[y,z]]-[[x,y],z]-[y,[x,z]]=0,$
\item[$\rm(e)$]  $ [x,[y,a]]-[[x,y],a]-[y,[x,a]]=0,$
\item[$\rm(f)$]  $[x,[a,y]]-[[x,a],y]-[a,[x,y]]=0,$
\item[$\rm(g)$]  $[a,[x,y]]-[[a,x],y]-[x,[a,y]]=0.$
\end{itemize}

This lead to the concept of crossed modules over Leibniz algebras.

\begin{defi}
 A crossed module over Leibniz algebra $\pp$ is a  triple $(\pp_1,\pp_0, f)$  where $\pp_0$  is a Leibniz algebra,
$\pp_1$ is a representation of $\pp$ such that for any $x\in\pp_0$, $a, b\in\pp_1$, the map  ${f}:\pp_1\longrightarrow\pp_0$ satisfying the following conditions:
\begin{eqnarray}
 \label{crossed01} {f} (x\cdot {a})&=&[x,{f}({a})]_{\pp_0},\quad {f}({a}\cdot x)=[{f}({a}),x]_{\pp_0},\\
  \label{crossed02}~{f}({a})\cdot {b}&=&{a}\cdot {{f}({b})}.
\end{eqnarray}
The map ${f}:\pp_1\longrightarrow\pp_0$ is called an equivariant map if it satisfying \eqref{crossed01}.
\end{defi}

We remark that our definition is different from the concept of  crossed module of Leibniz algebras in \cite{LP,SL} where they require $\pp_1$ to be a Leibniz algebra, an action of  $\pp_0$ on $\pp_1$,  and a further condition
$$[{a},{b}]_{\pp_1}={f}({a})\cdot {b}={a}\cdot {f}({b}).$$
In fact, we find these conditions are not necessary for our strict Leibniz 2-algebras.

\begin{prop}\label{crossed:semidirectproduct}
  Given a representation of crossed module over Leibniz algebra $(\pp_1,\pp_0, f)$ on $(V_1,V_0,\varphi)$. Define on $(\pp_1\oplus V_1,\pp_0\oplus V_0,\widehat{f}=f+\varphi)$
  the following maps
\begin{equation}
\left\{\begin{array}{rcl}
\widehat{f}({a}+v)&\triangleq&{f}(a)+\varphi(v),\\
{[x+w, x'+w']}&\triangleq&[x, x']_\pp+l_0(x)(w')+r_0(x')(w),\\
{\widehat{l}(x+w)(a+v)}&\triangleq&l(x)(a)+l_0(x)(v)+r_1(a)(w),\\
{\widehat{r}(x+w)(a+v)}&\triangleq&r(x)(a)+r_0(x)(v)+l_1(a)(w),\\
\end{array}\right.
\end{equation}
for all $x, x'\in \pp_0$, ${a}\in \pp_1$, $v\in V_1$ and  $w,w'\in V_0$.
 Then $(\pp_1\oplus V_1,\pp_0\oplus V_0,\widehat{f})$ is a crossed module over Leibniz algebra, which is called the semidirect product of $(\pp_1,\pp_0, f)$ and $(V_1,V_0,\varphi)$.
\end{prop}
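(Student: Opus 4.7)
The plan is to verify, by direct computation on each graded component, the four defining data of a crossed module over a Leibniz algebra for the triple $(\pp_1 \oplus V_1,\, \pp_0 \oplus V_0,\, \widehat{f})$: namely that $\pp_0 \oplus V_0$ is a Leibniz algebra under the given bracket, that $\pp_1 \oplus V_1$ is a representation of it via $\widehat{l},\widehat{r}$, that $\widehat{f}$ is equivariant (conditions \eqref{crossed01}), and that the Peiffer identity \eqref{crossed02} holds. Each equation, once expanded via the direct-sum definitions of $\widehat{f}$, $\widehat{l}$, $\widehat{r}$ and of the semidirect bracket on $\pp_0 \oplus V_0$, decomposes by the bigrading into terms living purely in $\pp_*$ and terms involving at least one component from $V_*$.

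First I would check the Leibniz identity for the bracket on $\pp_0 \oplus V_0$. The purely $\pp_0$-part reproduces the Leibniz identity of $\pp$, while the three contributions linear in a $V_0$-component collapse exactly to the axioms (LLM), (LML), (MLL) for the representation of $\pp_0$ on $V_0$. Next, for the representation axioms of $\widehat{l}, \widehat{r}$ on $\pp_1 \oplus V_1$, an analogous bigrading split reduces each of (LLM), (LML), (MLL) to the corresponding axioms for the representation of the crossed module $(\pp_1,\pp_0,f)$ on $(V_1,V_0,\varphi)$, which hold by hypothesis. For equivariance of $\widehat{f}$, I would evaluate $\widehat{f}\bigl(\widehat{l}(x+w)(a+v)\bigr)$ and $[x+w,\widehat{f}(a+v)]$ and match components: on the $\pp_0$-side this yields the original equivariance $f(x \cdot a) = [x, f(a)]_{\pp_0}$, and on the $V_0$-side the pieces of the form $\varphi(l_0(x)(v)) = l_0(x)(\varphi(v))$ and $\varphi(r_1(a)(w)) = r_0(f(a))(w)$, which follow from the compatibility of $\varphi$ with the representation (the analogs in the representation data of conditions (a), (b) of Definition \ref{defi:Leibniz2}). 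The right equivariance and the Peiffer identity for $\widehat{f}$ reduce by the same mechanism to the original Peiffer identity $f(a) \cdot b = a \cdot f(b)$ together with compatibility equations already packaged in the representation.

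The main obstacle is the combinatorial bookkeeping: each of the strict crossed module axioms unpacks into several cross-terms mixing $\pp_*$ and $V_*$, and every such cross-term must be traced back to a specific axiom of the representation. Because we are in the strict case ($l_3 = 0$ and no higher $l_2, m_2, r_2$ components), there are no Jacobiator-type coherences to verify, so no genuine obstruction arises; the argument is therefore a routine but lengthy verification once the bigrading is used to isolate the contributions.
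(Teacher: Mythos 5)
The paper states Proposition \ref{crossed:semidirectproduct} without any proof at all, so there is nothing to compare your argument against line by line; your direct verification is exactly the routine check the authors are implicitly leaving to the reader, and its structure is sound. Splitting each axiom by the bigrading is the right organizing device: the pure $\pp_*$-terms reproduce the axioms of $(\pp_1,\pp_0,f)$, and the terms linear in $V_*$ reduce to the representation axioms --- for the Leibniz identity on $\pp_0\oplus V_0$ the three $V_0$-components give precisely $l_0([x,y])=[l_0(x),l_0(y)]$, $r_0([x,y])=[l_0(x),r_0(y)]$ and $r_0([x,y])=l_0(x)r_0(y)+r_0(y)r_0(x)$, i.e.\ the strict forms of \eqref{LLM1}--\eqref{MLL3}, which are the operator versions of (LLM), (LML), (MLL) as you say.

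The one place where you should be explicit is the input you are allowed to use: the paper never formally defines ``a representation of a crossed module over a Leibniz algebra on $(V_1,V_0,\varphi)$'', so you must take it to be the strict specialization of the Leibniz 2-algebra representation (all of $l_2,m_2,r_2$ zero and the homomorphism conditions holding on the nose). Under that reading, the cross-terms you need for equivariance and for the Peiffer identity are exactly the components of the strict conditions $l_0(x)\in\End^0_\varphi(V)$ (giving $\varphi\, l_0(x)=l_0(x)\,\varphi$ on $V_1$) and $l_0(f(a))=\delta l_1(a)$, $r_0(f(b))=\delta r_1(b)$, whose $V_0$- and $V_1$-components read $\varphi\, r_1(a)=r_0(f(a))|_{V_0}$, $l_1(a)\,\varphi=l_0(f(a))|_{V_1}$, and $r_1(b)\,\varphi=r_0(f(b))|_{V_1}$; these are precisely the identities your equivariance and Peiffer computations call upon. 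With that made explicit your outline closes with no gap; it is a complete (if unavoidably lengthy) proof of a statement the paper asserts without justification.
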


 \begin{defi}
Let $(\pp_1,\pp_0, f)$ and $(\pp_1',\pp_0', f')$ be crossed modules over Leibniz algebras. A crossed module  homomorphism $F=(F_0,F_1)$ from $\pp$ to $ \pp'$ consists of  linear maps $F_0:\pp_0\rightarrow \pp_0',~F_1:\pp_{{1}}\rightarrow \pp_{{1}}'$
such that the following equalities hold for all $ x,y\in \pp_{0},
a\in \pp_{{1}},$
\begin{itemize}
\item [$\rm(1)$] $F_0 f=f' F_1$,
\item[$\rm(2)$] $F_{0}[x,y]_\pp-[F_{0}(x),F_{0}(y)]'=0$,
\item[$\rm(3)$] $F_{1}[x,a]_\pp-[F_{0}(x),F_{1}(a)]'=0$,
\item[$\rm(4)$] $F_{1}[a,x]_\pp-[F_{1}(a),F_{0}(x)]'=0$.
\end{itemize}
\end{defi}

 \begin{defi}
Let $(\pp_1,\pp_0, f)$ be a crossed modules over Leibniz algebra. A  derivation of degree 0 of $\pp$ consists of $D=(D_0,D_1)$ from $\pp$ to $ \pp$ consists of  linear maps $D_0:\pp_0\rightarrow \pp_0,~D_1:\pp_{{1}}\rightarrow \pp_{{1}}$
such that the following equalities hold for all $ x,y\in \pp_{0},$
\begin{itemize}
\item [$\rm(1)$] $D_0 f=f' D_1$,
\item[$\rm(2)$] $D_{0}[x,y]_\pp-[D_{0}(x),(y)]-[(x),D_0(y)]=0,$
\item[$\rm(3)$] $D_{0}[x,a]_\pp-[D_{0}(x),(a)]-[(x),D_1(a)]=0$,
\item[$\rm(4)$] $D_{0}[a,x]_\pp-[D_{1}(a),(x)]-[(a),D_0(x)]=0$.
\end{itemize}
\end{defi}

%\subsection{Abelian extensions}

Next, we study abelian extensions of crossed modules over Leibniz algebras.

\begin{defi}
Let $(\pp_1,\pp_0, f)$ be a crossed module over Leibniz algebra. An extension of $(\pp_1,\pp_0, f)$ is a
short exact sequence such that $\mathrm{Im}(i_0)=\mathrm{Ker}(p_0)$ and $\mathrm{Im}(i_1)=\mathrm{Ker}(p_1)$ in the following commutative diagram
\begin{equation}\label{eq:ext1}
\CD
  0 @>0>>  V_1 @>i_1>> \widehat{\pp_1} @>p_1>>  \pp_1  @>0>> 0 \\
  @V 0 VV @V \varphi VV @V \widehat{f} VV @V{f} VV @V0VV  \\
  0 @>0>> V_0 @>i_0>> \widehat{\pp_0} @>p_0>> \pp_0 @>0>>0
\endCD
\end{equation}
we call $(\widehat{\pp_1},\widehat{\pp_0}, \widehat f)$ an extension of $(\pp_1,\pp_0, f)$ by
$(V_1,V_0,\varphi)$, and denote it by $\widehat{\E}$.
It is called an abelian extension if $(V_1,V_0,\varphi)$ is an abelian crossed module over Leibniz algebra  (this means that the multiplication on $W$ is zero, the representation of $W$ on $V$ is trivial).
\end{defi}

A splitting $\sigma=(\sigma_0,\sigma_1):(\pp_1,\pp_0, f)\to (\widehat{\pp_1},\widehat \pp_0, \widehat f)$ consists of linear maps
$\sigma_0:{\pp_0}\to\widehat{\pp_0}$ and $\sigma_1:\pp_1\to \widehat{\pp_1}$
 such that  $p_0\circ\sigma_0=\id_{{\pp}}$, $p_1\circ\sigma_1=\id_{\pp_1}$ and $\widehat{f}\circ\sigma_1=\sigma_0\circ f$.

 Two extensions of Leibniz 2-algebras
 $\widehat{\E}:0\longrightarrow(V_1,V_0,\varphi)\stackrel{i}{\longrightarrow}(\widehat{\pp_1},\widehat \pp_0, \widehat f)\stackrel{p}{\longrightarrow}(\pp_1,\pp_0, f)\longrightarrow0$
 and  $\widetilde{\E}:0\longrightarrow(V_1,V_0,\varphi)\stackrel{j}{\longrightarrow}(\widetilde{\pp_1},\widetilde{\pp_0}, \widetilde{f})\stackrel{q}{\longrightarrow}(\pp_1,\pp_0, f)\longrightarrow0$ are equivalent,
 if there exists a homomorphism $F:(\widehat{\pp_1},\widehat \pp_1, \widehat f)\longrightarrow(\widetilde{\pp_1},\widetilde{\pp_0}, \widetilde{f})$  such that $F\circ i=j$, $q\circ
 F=p$.

Let $(\widehat{\pp_1},\widehat\pp_0, \widehat f)$ be an extension of $(\pp_1,\pp_0, f)$ by
$(V_1,V_0,\varphi)$ and $\sigma:(\pp_1,\pp_0, f)\to (\widehat{\pp_1},\widehat \pp_0, \widehat f)$ be a section.
Define the following maps:
\begin{equation}\label{eq:morphism}
\left\{\begin{array}{rlclcrcl}
l_0,r_0:&\pp&\longrightarrow& \End^{0}_{\dM_\frkh}(\frkh),&& l_0(x)(w+u)&\triangleq&[\sigma_0(x),w+u]_{\hp},\\
                                                  && & && r_0(x)(w+u)&\triangleq&[w+u, \sigma_0(x)]_{\hp},\\
l_1,r_1:&\hh&\longrightarrow& \End^{1}(\frkh),&&l_1(a)(w)&\triangleq&[\sigma_1(a),w]_{\hp},\\
                                                  &&& &&r_1(a)(w)&\triangleq&[u,\sigma_1(w)]_{\hp},\\
\end{array}\right.
\end{equation}
for all $x,y,z\in\pp$, $m\in \hh $.

\begin{prop}\label{pro:2-modules3}
With the above notations,  $(V_1,V_0,\varphi)$ is a representation of $(\pp_1,\pp_0,f)$.
Furthermore, this representation structure does not depend on the choice of the splitting $\sigma$.
Moreover,  equivalent abelian extensions give the same  representation on $(V_1,V_0,\varphi)$.
\end{prop}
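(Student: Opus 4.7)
The plan is to mirror the strategy of Proposition \ref{pro:2-modules2}, but working in the strict (crossed module) setting where there is no $l_3$ and the module $(V_1,V_0,\varphi)$ is abelian. First I would show that the four maps $l_0, r_0, l_1, r_1$ defined by \eqref{eq:morphism} really land in the relevant endomorphism spaces: because $V$ is an ideal (being the kernel of the projection $p$), the brackets $[\sigma_0(x),w+u]_{\hat\pp}$, $[w+u,\sigma_0(x)]_{\hat\pp}$, $[\sigma_1(a),w]_{\hat\pp}$ and $[w,\sigma_1(a)]_{\hat\pp}$ all lie in $V$. Moreover, these assignments commute with $\varphi$ in the degree-$0$ sense because $\widehat{f}$ is a chain map and $\widehat{f}\sigma_1=\sigma_0 f$.

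Next, I would verify axiom by axiom that $(l_0,r_0,l_1,r_1)$ satisfies the defining identities of a representation of the crossed module $(\pp_1,\pp_0,f)$ on $(V_1,V_0,\varphi)$. The input is the Leibniz 2-algebra structure on $\widehat{\pp}$ (items $(a)$--$(g)$ in the strict definition) applied to elements of the form $\sigma_0(x), \sigma_1(a)$ and elements of $V$; at every step one uses that $V$ is abelian (so any bracket of two elements of $V$ vanishes) to collapse the Leibniz identities in $\widehat{\pp}$ into the required (LLM), (LML), (MLL) relations and the compatibility of $\varphi$ with the action. For instance, applying the Leibniz identity in $\widehat\pp$ to $\sigma_0(x),\sigma_0(y),w$ gives
\begin{align*}
[\sigma_0(x),[\sigma_0(y),w]_{\hat\pp}]_{\hat\pp} &= [[\sigma_0(x),\sigma_0(y)]_{\hat\pp}, w]_{\hat\pp} + [\sigma_0(y),[\sigma_0(x),w]_{\hat\pp}]_{\hat\pp},
\end{align*}
and since $[\sigma_0(x),\sigma_0(y)]_{\hat\pp}-\sigma_0[x,y]_\pp\in V_0$ acts trivially on $w$, this reduces to $l_0(x)l_0(y)(w) - l_0(y)l_0(x)(w) = l_0([x,y]_\pp)(w)$, which is (LLM). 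The remaining axioms follow from analogous manipulations with the Leibniz identities of $\widehat\pp$ together with the equivariance conditions involving $\widehat f$.

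For the second assertion, I would let $\sigma'=(\sigma_0',\sigma_1')$ be a second splitting and note that $\sigma_0'-\sigma_0$ takes values in $V_0=\ker p_0$ and $\sigma_1'-\sigma_1$ takes values in $V_1=\ker p_1$. Then, for instance,
\begin{align*}
l_0'(x)(w+u) - l_0(x)(w+u) &= [\sigma_0'(x)-\sigma_0(x),\, w+u]_{\hat\pp},
\end{align*}
which vanishes because $V$ is abelian; similarly for $r_0, l_1, r_1$. Hence the representation is independent of the splitting.

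For the third assertion, let $F=(F_0,F_1):(\widehat{\pp_1},\widehat\pp_0,\widehat f)\to (\widetilde{\pp_1},\widetilde\pp_0,\widetilde f)$ be an equivalence of extensions. Given a splitting $\sigma$ of the first extension, the composition $F\circ\sigma$ is a splitting of the second; since $F$ restricts to the identity on $V$ and is a strict homomorphism, applying $F$ to the formulas \eqref{eq:morphism} yields the same $l_0,r_0,l_1,r_1$ computed for the second extension with splitting $F\circ\sigma$, and by the splitting-independence just proved, this equals the representation attached to $\widetilde{\E}$. The main technical obstacle will be the first part: a patient but purely bookkeeping verification that each of the representation axioms in the strict Leibniz 2-algebra case follows from a single Leibniz identity in $\widehat\pp$ after throwing away $V$--$V$ brackets and using $\widehat f\sigma_1=\sigma_0 f$; everything else is essentially formal.
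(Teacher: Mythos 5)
The paper does not actually print a proof of Proposition \ref{pro:2-modules3}; it is left as the strict ($l_3=0$, $l_2=m_2=r_2=0$) specialization of Proposition \ref{pro:2-modules2}, which is quoted from \cite{Zhang}. Your argument is exactly the intended one and is correct: well-definedness from $V=\Ker p$ being an abelian ideal together with $\widehat f\circ\sigma_1=\sigma_0\circ f$, the representation axioms from the Leibniz identities of $\widehat\pp$ after discarding $V$--$V$ brackets, splitting-independence because $\sigma'-\sigma$ lands in $V$ which brackets trivially with $V$, and invariance under equivalence via the induced splitting $F\circ\sigma$.
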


Let $\sigma:(\pp_1,\pp_0, f)\to (\widehat{\pp_1},\widehat \pp_0, \widehat f)$  be a section of an abelian extension. Define the following linear maps:
$$
\begin{array}{rlclcrcl}
\theta:&\pp_1&\longrightarrow& V_0,&& \theta(a)&\triangleq&\widehat{f}\si(a)-\si(f (a)),\\
\omega:&\otimes^2\pp_0&\longrightarrow& V_0,&& \omega(x,y)&\triangleq&[\si(x),\si(y)]_{\widehat{\pp}}-\si[x,y]_\pp,\\
\mu:&\pp_0\otimes\pp_1&\longrightarrow&V_1,&& \mu(x,a)&\triangleq&[\si(x),\si(a)]_{\widehat{\pp}}-\si[x,a]_\pp,\\
\nu:&\pp_1\otimes\pp_0&\longrightarrow&V_1,&& \nu(a,x)&\triangleq&[\si(a),\si(x)]_{\widehat{\pp}}-\si[a,x]_\pp,
\end{array}
$$
for all $x,y,z\in\pp$, $m\in \hh $.

\begin{thm}\label{mainthm512}
There is a one-to-one correspondence between equivalence classes of abelian extensions and the second cohomology group ${H}^2((\pp_1,\pp_0,f),(V_1,V_0,\varphi))$.
\end{thm}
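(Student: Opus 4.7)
The plan is to mirror the proof of Theorem \ref{mainthm44}, specialising every argument to the strict case where $l_3^\g=0$ and $\widehat{l_3}=0$. Since crossed modules over Leibniz algebras are precisely strict Leibniz 2-algebras, the cochain component $\theta\in\Hom(\otimes^3\pp_0,V_1)$ of the general 2-cocycle vanishes automatically, and the remaining data $(\theta,\omega,\mu,\nu)$ (where now $\theta:\pp_1\to V_0$ plays the role of $\psi$ from Section 2) captures everything.

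First I will establish a well-defined map from equivalence classes of abelian extensions to $H^2$. Starting from an abelian extension $\widehat{\E}$ with splitting $\sigma=(\sigma_0,\sigma_1)$, Proposition \ref{pro:2-modules3} yields the induced representation, and the linear maps $(\theta,\omega,\mu,\nu)$ defined before the statement encode the failure of $\sigma$ to be a strict homomorphism. Direct computation using axioms (a)--(g) of strict Leibniz 2-algebras, exactly as in Theorem \ref{thm:2-cocylce} but with the $\widehat{l_3}$-terms suppressed, shows that $(\theta,\omega,\mu,\nu)$ satisfies the specialization of \eqref{eq:coc01}--\eqref{eq:coc08} with $l_2=m_2=r_2=0$ on the representation side and with the last equation \eqref{eq:coc08} trivially satisfied. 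Next I would verify that a different choice of splitting $\sigma'=\sigma+\lambda$ for some $\lambda\in\Hom(\pp,V)$ changes $(\theta,\omega,\mu,\nu)$ by the coboundary $D_1\lambda$, so the cohomology class $[(\theta,\omega,\mu,\nu)]\in H^2$ depends only on the extension; and that an equivalence $F:\widehat{\pp}\to\tilde{\pp}$ of extensions intertwines the two splittings, again producing a coboundary difference. Together these give a well-defined map $\widehat{\E}\mapsto[(\theta,\omega,\mu,\nu)]$.

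Conversely, given any 2-cocycle $(\theta,\omega,\mu,\nu)$, I would endow the direct sum $\pp_0\oplus V_0$ and $\pp_1\oplus V_1$ with the operations obtained from the specialization of \eqref{eq:bracket} (dropping the $l_3^{\gh}$ line and setting $\psi=\theta$), namely
\begin{align*}
\widehat{f}(a+v)&=f(a)+\theta(a)+\varphi(v),\\
[x+w,y+w']&=[x,y]_\pp+\omega(x,y)+l_0(x)w'+r_0(y)w,\\
[x+w,a+v]&=[x,a]_\pp+\mu(x,a)+l_0(x)v+r_1(a)w,\\
[a+v,x+w]&=[a,x]_\pp+\nu(a,x)+l_1(a)w+r_0(x)v.
\end{align*}
The cocycle conditions translate term-by-term into the axioms (a)--(g) of Definition for a strict Leibniz 2-algebra, so $(\pp_1\oplus V_1,\pp_0\oplus V_0,\widehat{f})$ is a crossed module. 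With the obvious inclusion and projection one obtains an abelian extension of $(\pp_1,\pp_0,f)$ by $(V_1,V_0,\varphi)$ inducing the given representation, and the canonical splitting recovers $(\theta,\omega,\mu,\nu)$ on the nose.

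Finally I would check the two constructions are mutually inverse up to equivalence. The map $[\text{extension}]\mapsto[\text{cocycle}]\mapsto[\text{extension}]$ lands on the standard form of \eqref{ext2} (with the row of $\widehat{l_3}$ absent), and an explicit equivalence $F:\widehat{\pp}\to\pp\oplus V$ is provided by a chosen splitting. The map $[\text{cocycle}]\mapsto[\text{extension}]\mapsto[\text{cocycle}]$ is the identity by construction. The main technical obstacle is the bookkeeping in verifying that every cocycle axiom corresponds to exactly one crossed-module axiom of the semidirect-type bracket on $\pp\oplus V$; this is lengthy but entirely parallel to the Leibniz 2-algebra computation of Theorem \ref{mainthm44}, simplified by the vanishing of all $l_3$-type terms. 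Hence the assignment descends to a bijection between equivalence classes of abelian extensions and $H^2((\pp_1,\pp_0,f),(V_1,V_0,\varphi))$.
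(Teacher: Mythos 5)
Your proposal is correct and follows exactly the route the paper intends: the paper states Theorem \ref{mainthm512} without proof as the specialization of Theorem \ref{mainthm44} to strict Leibniz 2-algebras, using precisely the data $(\theta,\omega,\mu,\nu)$, the induced representation of Proposition \ref{pro:2-modules3}, and the semidirect-product construction of Proposition \ref{crossed:semidirectproduct} that you invoke. Your observation that the $3$-cochain component and the conditions \eqref{eq:coc08} drop out (and that the $1$-cochains reduce to $(\lambda_0,\lambda_1)$, matching the paper's $Z^1(\pp,V)$) is the right bookkeeping, so the argument goes through as described.
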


\subsection{Automorphisms of crossed modules}
Let $V$ and $\pp$ be crossed module over Leibniz algebras. Then an extension of $\pp$ by $V$ is a short exact sequence of crossed module over Leibniz algebra $$0 \to V \stackrel{i}{\to} \widehat{\pp} \stackrel{p}{\to} \pp \to 0,$$ where $\widehat{\pp} $ is a crossed module over Leibniz algebra. Without loss of generality, we may assume that $i$ is the inclusion map and we omit it from the notation.

Let $\Aut(V)$, $\Aut(\widehat{\pp})$ and $\Aut(\pp)$ denote the groups of all crossed module over Leibniz algebra automorphisms of $V$, $\widehat{\pp}$ and $\pp$, respectively.  Let $\Aut_V(\widehat{\pp})$ denote the group of all crossed module over Leibniz algebra automorphisms of $\widehat{\pp}$ which keep $V$ invariant as a set. Note that an automorphism ${F}=({F}_0, {F}_1) \in \Aut_V(\widehat{\pp})$ induces automorphisms ${F}|_V=(\beta_0, \beta_1) \in \Aut(V)$ given by
 $$\beta_0(v)={F}_0|_V(v),\quad \beta_1(m)={F}_1|_V(m),$$
 for all $u, v \in V_0, m \in V_1$.
 And we can also define a map $\overline{{F}}=(\alpha_0, \alpha_1) : \mathfrak{\pp} \rightarrow \mathfrak{\pp}$ by
\begin{align*}
\alpha_0(x)=p_0{F}_0\sigma_0(x),\quad \alpha_1(a)=p_1{F}_1\sigma_1(a),
\end{align*}
for all $x, y \in \pp_0, a \in \pp_1$.

\begin{thm}\label{main-thm31}
Let $0 \to V \to \widehat{\pp} \to \pp \to 0$ be an abelian extension of crossed module over Leibniz algebra and $(\beta, \alpha) \in \Aut(V)\times \Aut(\pp)$. Then the pair $(\beta, \alpha)$ is inducible if and only if exists a linear map $\lambda \in \mathrm{Hom}(\pp, V)$ satisfying the followings
\begin{align}
\label{CRO1}&\beta_0(\theta (a)) - \theta (\alpha_1(a)) = \psi(\lambda_1(a))-\lambda_0 (f(a)),\\
\label{CRO2}&\beta_0 (\omega (x, y)) - \omega (\alpha_0(x), \alpha_0(y)) =  l_0({\alpha_0(x)}) (\lambda_0(y)) + r_0({\alpha_0 (y)}) (\lambda_0(x))  - \lambda_0 ([x, y]), \\
\label{CRO3}&\beta_1 (\mu (x, a)) - \mu (\alpha_0(x), \alpha_1(a)) =  l_0({\alpha_ 0(x)})( \lambda_1(a)) + r_1({\alpha_1 (a)})( \lambda_0(x) ) - \lambda_1 ([x, a]),\\
\label{CRO4}&\beta_1 (\nu (a, x)) - \nu (\alpha_1(a), \alpha_0(x)) =  r_0({\alpha_0 (x)})( \lambda_1(a)) + l_1({\alpha_1 (a)})( \lambda_0(x) ) - \lambda_1 ([a, x]),\\
\label{CRO5}&\beta_0 { l_0}(x){\beta_0}^{-1} (v) ={ l_0}({\alpha_0 (x)})(v),\\
\label{CRO6}&\beta_0 { r_0}(x){\beta_0}^{-1} (v) = { r_0}({\alpha_0 (x)})  (v),\\
\label{CRO7}&\beta_1 l_0(x){\beta_1}^{-1}(m)= l_0(\alpha_0(x))(m),\\
\label{CRO8}&\beta_1 r_0(x){\beta_1}^{-1}(m)= r_0(\alpha_0(x))(m),\\
\label{CRO9}&\beta_1 r_1(a){\beta_0}^{-1}(v)= r_1(\alpha_1(a))(v),\\
\label{CRO10}&\beta_1 l_1(a){\beta_0}^{-1}(v)= l_1(\alpha_1(a))(v),
\end{align}

\end{thm}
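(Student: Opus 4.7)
The plan is to mirror the argument of Theorem \ref{main-thm2}, specialized to the strict setting: since a crossed module over Leibniz algebras is a strict Leibniz $2$-algebra (i.e.\ $l_3 = 0$, and consequently $l_2 = m_2 = r_2 = 0$ in any representation), and since crossed module homomorphisms have no $F_2$ component, equations \eqref{COC5}, \eqref{COC12}--\eqref{COC14} as well as the $\theta$ part of \eqref{COC1} become vacuous, and the remaining conditions collapse exactly to \eqref{CRO1}--\eqref{CRO10}. So one can either invoke Theorem \ref{main-thm2} and read off the specialization, or carry out a direct proof that is essentially a simplified reprise of that one.

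For the forward direction, I assume that there exists $F = (F_0, F_1) \in \Aut_V(\widehat{\pp})$ with $\Phi(F) = (\beta, \alpha)$. The key observation is that, as in the Leibniz $2$-algebra case, the maps
\[
\lambda_0(x) = F_0 \sigma_0(x) - \sigma_0 \alpha_0(x), \qquad \lambda_1(a) = F_1 \sigma_1(a) - \sigma_1 \alpha_1(a)
\]
take values in $V$, because $p_0(F_0\sigma_0(x) - \sigma_0\alpha_0(x)) = \alpha_0(x) - \alpha_0(x) = 0$ and similarly for $\lambda_1$. I then unpack the four crossed-module homomorphism axioms for $F$ on elements of the form $\sigma_0(x) + v$ and $\sigma_1(a) + m$, using the definitions of $\omega, \mu, \nu, \theta$ attached to $\sigma$. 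The axiom $F_0 \widehat{f} = \widehat{f} F_1$ gives \eqref{CRO1}, the axiom on $[\sigma_0(x)+u,\sigma_0(y)+v]$ gives \eqref{CRO2} (and separately, evaluating on pure $V$--terms yields \eqref{CRO5} and \eqref{CRO6}), and the mixed brackets yield \eqref{CRO3}, \eqref{CRO4} together with \eqref{CRO7}--\eqref{CRO10}; the conditions \eqref{CRO5}--\eqref{CRO10} are exactly the restrictions obtained by evaluating the bracket axioms on pairs with one factor in $V$, using that $F_0|_V = \beta_0$, $F_1|_V = \beta_1$ and that $V$ is abelian so that $\omega,\mu,\nu,\theta$ vanish on $V$.

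For the converse, given $\lambda = (\lambda_0, \lambda_1) \in \mathrm{Hom}(\pp, V)$ satisfying \eqref{CRO1}--\eqref{CRO10}, I define
\[
F_0(\sigma_0(x) + v) = \sigma_0 \alpha_0(x) + \lambda_0(x) + \beta_0(v), \qquad F_1(\sigma_1(a) + m) = \sigma_1 \alpha_1(a) + \lambda_1(a) + \beta_1(m).
\]
Linearity is immediate. Injectivity follows because $\sigma_i$ and $\alpha_i$ are injective and $V$ is complementary to the image of $\sigma$; surjectivity follows by the same inverse-image computation as in the proof of Theorem \ref{main-thm2}, choosing $x' = \alpha_0^{-1}(x)$, $v' = \beta_0^{-1}(v - \lambda_0(\alpha_0^{-1}(x)))$ and likewise at degree $1$. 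That $F_0 \widehat{f} = \widehat{f} F_1$ and that $F$ respects all three brackets reduces, after expanding both sides against $\sigma_0(x)+u$ and $\sigma_1(a)+m$, to the identities \eqref{CRO1}--\eqref{CRO10}; this is a direct verification whose structure is identical to the bracket--by--bracket computation in the proof of Theorem \ref{main-thm2}, with the $l_3$-terms absent.

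The main obstacle is purely bookkeeping: one must carefully separate, inside each homomorphism axiom, the $\sigma$--component (which contributes cocycle data $\omega, \mu, \nu, \theta$ and the $\lambda$--differences), the $V$--component (which contributes the action--intertwining conditions \eqref{CRO5}--\eqml{CRO10}), and the mixed terms. Because there is no $F_2$ and no $l_3$, there are no three--fold identities to track, which makes the crossed--module case strictly a simplification of Theorem \ref{main-thm2}; the argument therefore terminates after checking the four axioms of a crossed module homomorphism.
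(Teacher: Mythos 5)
Your proposal is correct and follows exactly the route the paper intends: the paper states Theorem \ref{main-thm31} without proof, treating it as the specialization of Theorem \ref{main-thm2} to strict Leibniz 2-algebras (crossed modules), which is precisely the reduction you carry out — with $l_3=0$, no $F_2$, and hence $l_2=m_2=r_2=0$ for the induced representation, conditions \eqref{COC5} and \eqref{COC12}--\eqref{COC14} and all $\lambda_2$/$\alpha_2$ terms drop out, leaving \eqref{CRO1}--\eqref{CRO10}. The only blemish is the phrase ``the $\theta$ part of \eqref{COC1}'' (that equation has no $\theta$ term; you presumably mean the $\theta$-cocycle condition \eqref{COC5}), which does not affect the argument.
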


\begin{rmk}\label{{rem-compatible}}
	A pair $(\be,\, \al) \in \Aut(V) \times \Aut(\pp)$ is called compatible if the conditions (\ref{CRO5}) -(\ref{CRO10}) hold. Equivalently, the following diagram commutes:
	$$
	\xymatrix{
		\pp \ar[d]_{\rho} \ar[r]^{\al} & \pp \ar[d]^{\rho}\\
		\End(V)\hspace*{3mm} \ar[r]_{f \mapsto F f F^{-1}} & \hspace*{3mm}\End(V)\\}
	$$
\end{rmk}
It is easy to see that the set
$$C_\rho(\pp,V)=\{(\beta,\, \alpha) \in \Aut(V) \times \Aut(\pp)|(\beta,\, \alpha) ~\text{satisfies ~ (\ref{CRO5}) -(\ref{CRO10}) } \}$$
of all compatible pairs is a subgroup of $\Aut(V) \times \Aut(\pp)$. Conditions (\ref{CRO5}) -(\ref{CRO10}) of the above theorem shows that every inducible pair is compatible.

Let
\begin{eqnarray*}
&&\ker \Phi=\Aut^{V,\pp}(\widehat{\pp})= \big\{{F} \in \Aut(\widehat{\pp})~|~ \Phi({F}_0)=(1_{V_0},1_{\pp_0}), ~\Phi({F}_1)=(1_{V_1},1_{\pp_1}),\big\}.
\end{eqnarray*}
and
\begin{eqnarray*}
&&{Z}^1(\pp,V) %& = & \big\{\lambda \in {C}^1(\fg,V)~|~ \partial^1(\lambda)=0 \big\}\\
= \big\{\lambda \in {C}^1(\pp,V)~\big\}
\end{eqnarray*}
let $\lambda=(\lambda_0, \lambda_1)$ satisfies the following formulas
\begin{align*}
&\psi \lambda_1(a)-\lambda_0 f (a)=0,\\
& l_0(x)\lambda_0(y)+ r_0(y)\lambda_0(x)-\lambda_0[x,y]_\pp=0,\\
& l_0(x)\lambda_1(a)+ r_1(a)\lambda_0(x)-\lambda_1[x,a]_\pp=0,\\
& l_1(a)\lambda_1(x)+ r_0(x)\lambda_0(a)-\lambda_1[a,x]_\pp=0,
\end{align*}

For each $(\beta, \alpha) \in \C_\rho(\pp,V)$, we define
$$\Psi: C_\rho(\pp,V)\to \End (Z^2(\pp,V))$$
by
\begin{align*}
\Psi(\beta,\alpha)(\theta)=&~\theta_{(\beta, \alpha)}=\beta_0\theta(\alpha^{-1}_0),\\
\Psi(\beta, \alpha)(\omega)=&~\omega_{(\beta, \alpha)}=\beta_0\omega(\alpha^{-1}_0,\alpha^{-1}_0),\\
\Psi(\beta, \alpha)(\mu)=&~\mu_{(\beta, \alpha)}=\beta_1\mu(\alpha^{-1}_0,\alpha^{-1}_1),\\
\Psi(\beta, \alpha)(\nu)=&~\nu_{(\beta, \alpha)}=\beta_1\nu(\alpha^{-1}_1,\alpha^{-1}_0),
\end{align*}
It can be easily seen that $(\theta,\omega,\mu,\nu)_{(\theta, \phi)}$ is a 2-cocycle if $(\theta,\omega,\mu,\nu)$ is a 2-cocycle. Since $(\theta,\omega,\mu,\nu)$ is the 2-cocycle corresponding to the extension $\mathcal{E}$, this defines a map
$$\varpi_{\mathcal{E}}: C_\rho(\pp,V) \to H^2(\pp,V)$$
given by
$$\varpi_{\mathcal{E}} (\beta, \alpha)= [(\theta,\omega,\mu,\nu)_{(\beta, \alpha)}]-[(\theta,\omega,\mu,\nu)],$$
where $[(\theta,\omega,\mu,\nu,\theta)_{(\beta, \alpha)}]$ is the cohomology class of $(\theta,\omega,\mu,\nu,\theta)_{(\beta, \alpha)}$.
This $\varpi_{\mathcal{E}}$ is called the {\bf Wells map}.

\begin{thm}
Let $\mathcal{E}:0 \to V \to\hp \to \pp \to 0$ be an abelian extension of Leibniz 2-algebras. Let $\rho: \pp \to  \End(V)$ denote the induced $\pp$-module structure on $V$. Then there is an exact sequence
\begin{equation}
0 \to {Z}^1(\pp,V) \stackrel{i}{\longrightarrow} \Aut_V(\hp) \stackrel{\Phi}{\longrightarrow} C_\rho(\pp,V) \stackrel{\varpi_{\mathcal{E}}}{\longrightarrow} H^2(\pp,V).
\end{equation}
\end{thm}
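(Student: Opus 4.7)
The plan is to prove the sequence is exact at each of the three intermediate positions, following the template of Theorem \ref{wells-sequence2} but specialized to the crossed-module setting of Section 5 where the trilinear bracket $l_3$ and the $F_2$-slots vanish identically. So the abelian extension is encoded by a 2-cocycle with just four components $(\theta,\omega,\mu,\nu)$, and automorphisms of $\hp$ fixing $V$ are pairs $F=(F_0,F_1)$. This simplification collapses most of the combinatorics of Theorem \ref{main-thm2}: the obstruction analysis reduces to equations \eqref{CRO1}--\eqref{CRO4} together with the compatibility conditions \eqref{CRO5}--\eqref{CRO10} that cut out $C_\rho(\pp,V) \subseteq \Aut(V)\times\Aut(\pp)$.

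\textbf{Step 1: exactness at $Z^1(\pp,V)$ and at $\Aut_V(\hp)$.} First I would construct $i: Z^1(\pp,V) \to \Aut_V(\hp)$ by sending $\lambda=(\lambda_0,\lambda_1)$ to $F_\lambda=(F_{\lambda_0},F_{\lambda_1})$ defined on the standard model $\hp_j = \pp_j \oplus V_j$ by $F_{\lambda_j}(\sigma_j(x)+v) = \sigma_j(x)+\lambda_j(x)+v$. The four equations defining $Z^1(\pp,V)$ translate term-by-term into the axioms that make $F_\lambda$ a crossed-module homomorphism; a two-line computation with sections verifies surjectivity onto $\hp$, and a null-kernel check uses injectivity of $\sigma$. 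Since $F_\lambda|_V = \mathrm{id}_V$ and $\overline{F_\lambda} = \mathrm{id}_\pp$, we get $F_\lambda \in \ker\Phi = \Aut^{V,\pp}(\hp)$, and $i$ is injective (hence exactness at $Z^1(\pp,V)$). Conversely, given $F \in \ker\Phi$, the differences $\lambda_j(x) := F_j\sigma_j(x)-\sigma_j(x)$ land in $V_j$ because $p_j F_j\sigma_j = \overline F_j = \mathrm{id}$, and unpacking the crossed-module homomorphism equations for $F$ applied to elements of the form $\sigma(x)+v$ yields exactly the cocycle conditions on $\lambda$. This gives $\ker\Phi = i(Z^1(\pp,V))$, i.e., exactness at $\Aut_V(\hp)$.

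\textbf{Step 2: exactness at $C_\rho(\pp,V)$.} I need $\mathrm{im}\,\Phi = \ker\varpi_{\mathcal{E}}$. If $(\beta,\alpha) = \Phi(F)$, then setting $\lambda_j(x) := F_j\sigma_j(x)-\sigma_j\alpha_j(x)$ and expanding $F$'s multiplicativity on $[\sigma(x),\sigma(y)]$, $[\sigma(x),\sigma(a)]$, $[\sigma(a),\sigma(x)]$ and on $\widehat{f}\sigma(a)$ produces exactly relations \eqref{CRO1}--\eqref{CRO4}. After the substitution $x \mapsto \alpha_0^{-1}(x)$, $a \mapsto \alpha_1^{-1}(a)$, these relations read $\Psi(\beta,\alpha)(\theta,\omega,\mu,\nu) - (\theta,\omega,\mu,\nu) = \partial^1(\lambda\alpha^{-1})$, so $\varpi_{\mathcal{E}}(\beta,\alpha) = 0$. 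Conversely, if $\varpi_{\mathcal{E}}(\beta,\alpha)=0$ then some $\lambda \in C^1(\pp,V)$ realizes this coboundary identity; by (the crossed-module analogue of) Theorem \ref{main-thm31}, such $\lambda$ is precisely what is needed to construct $F \in \Aut_V(\hp)$ with $\Phi(F) = (\beta,\alpha)$, giving the reverse inclusion.

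\textbf{Main obstacle.} The delicate point is checking that $\varpi_{\mathcal{E}}$ is well defined as a map into $H^2(\pp,V)$ rather than $Z^2(\pp,V)$: its value must be independent of the splitting $\sigma$ and must carry coboundaries to coboundaries. Changing $\sigma$ alters $(\theta,\omega,\mu,\nu)$ by $\partial^1$ of some 1-cochain, so I must verify that $\Psi(\beta,\alpha)$ commutes with $\partial^1$ modulo coboundaries; this is precisely where the compatibility conditions \eqref{CRO5}--\eqref{CRO10} are essential, since they allow the conjugations $\beta_j(\cdot)\alpha_j^{-1}$ to pass through each occurrence of the representation maps $l_0,r_0,l_1,r_1$ appearing in $\partial^1$. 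The verification is a direct but bookkeeping-heavy computation, considerably tamer than in Theorem \ref{wells-sequence2} because no $l_3$- or $\alpha_2$-correction terms enter.
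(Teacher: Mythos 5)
Your proposal is correct and follows essentially the same route the paper takes for the general Leibniz 2-algebra case (Theorem \ref{wells-sequence2}): the isomorphism $Z^1(\pp,V)\cong\ker\Phi$ gives exactness at the first two spots, and exactness at $C_\rho(\pp,V)$ comes from identifying $\operatorname{im}\Phi=\ker\varpi_{\mathcal{E}}$ via the inducibility criterion of Theorem \ref{main-thm31}. The paper states the crossed-module theorem without a separate proof, and your specialization of that machinery (dropping the $l_3$- and $F_2$-components so the cocycle reduces to $(\theta,\omega,\mu,\nu)$) is exactly the intended argument.
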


\subsection{Derivations of crossed modules}

Let $\Der(V)$, $\Der(\widehat{\pp})$ and $\Der(\pp)$ denote the groups of all crossed module over Leibniz algebra derivations of $V$, $\widehat{\pp}$ and $\pp$, respectively.  Let $\Der_V(\widehat{\pp})$ denote the group of all crossed module over Leibniz algebra derivations of $\widehat{\pp}$ which keep $V$ invariant as a set. Note that a derivation ${D}=({D}_0, {D}_1) \in \Der_V(\widehat{\pp})$ induces derivations ${D}|_V=(\beta_0, \beta_1) \in \Der(V)$ given by
 $$\beta_0(v)={D}_0|_V(v),\quad \beta_1(m)={D}_1|_V(m),$$
 for all $u, v \in V_0, m \in V_1$.
 And we can also define a map $\overline{{D}}=(\alpha_0, \alpha_1) : \mathfrak{\pp} \rightarrow \mathfrak{\pp}$ by
\begin{align*}
\alpha_0(x)=p_0{D}_0\sigma_0(x),\quad \alpha_1(a)=p_1{D}_1\sigma_1(a),
\end{align*}
for all $x, y \in \pp_0, a \in \pp_1$.

\begin{thm}\label{main-thm32}
Let $0 \to V \to \widehat{\pp} \to \pp \to 0$ be an abelian extension of crossed module over Leibniz algebra and $(\beta, \alpha) \in \Der(V)\times \Der(\pp)$. Then the pair $(\beta,\alpha)$ is inducible if and only if exists a linear map $\lambda \in \mathrm{Hom}(\pp, V)$ satisfying the followings
\begin{align}
&\label{RCOCY1}\beta_0(\theta (a)) - \theta (\alpha_1(a)) = \dM_{V}(\lambda_1(a))-\lambda_0 (\dM_{\pp}(a)),\\
&\label{RCOCY2}\beta_0 (\omega (x, y)) - \omega (\alpha_0(x),y)-\omega(x, \alpha_0(y)) = l_0(x) (\lambda_0(y)) + r_0(y) (\lambda_0(x))  - \lambda_0 [x, y],\\
&\label{RCOCY3}\beta_1 (\mu (x, a)) - \mu (\alpha_0(x), a) - \mu (x, \alpha_1(a))= l_0(x)( \lambda_1(a)) + r_1(a)( \lambda_0(x) ) - \lambda_1 [x, a],\\
&\label{RCOCY4}\beta_1 (\nu (a, x)) - \nu (\alpha_1(a), x)- \nu (a, \alpha_0(x)) = r_0(x)( \lambda_1(a)) + l_1(a)( \lambda_0(x) ) - \lambda_1 [a, x],\\
&\label{RCOCY5}\beta_0l_0(x)(v)-l_0(\alpha_0(x))(v)
-l_0(x)(\beta_0(v))=0,\\
&\label{RCOCY6}\beta_0r_0(x)(v)
-r_0(\alpha_0(x))(v)-r_0(x)(\beta_0(v))=0,\\
&\label{RCOCY7}\beta_1l_0(x)(m)-l_0(\alpha_0(x))(m)
-l_0(x)(\beta_1(m))=0,\\
&\label{RCOCY8}\beta_1r_0(x)(m)
-r_0(\alpha_0(x))(m)-r_0(x)(\beta_1(m))=0,\\
&\label{RCOCY9}\beta_1l_1(a)(v)-l_1(\alpha_1(a))(v)
-l_1(a)(\beta_0(v))=0,\\
&\label{RCOCY10}\beta_1r_1(a)(v)
-r_1(\alpha_1(a))(v)-r_1(a)(\beta_0(v))=0,
\end{align}

\end{thm}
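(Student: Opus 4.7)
The plan is to adapt the proof of Theorem \ref{main-thm1} to the strict (crossed module) setting, exploiting the fact that a crossed module over a Leibniz algebra is exactly a strict Leibniz 2-algebra (i.e.\ $l_3 = 0$). Under this reduction, the components $\theta$, $\alpha_2$, $\lambda_2$, $l_2, m_2, r_2$ all vanish, so conditions \eqref{COCY5} and \eqref{COCY12}--\eqref{COCY14} become trivial while \eqref{COCY1}--\eqref{COCY4} and \eqref{COCY6}--\eqref{COCY11} restrict to the stated conditions \eqref{RCOCY1}--\eqref{RCOCY10}. The point is therefore to give a self-contained argument mirroring the earlier derivation proof, but without the higher coherence data.

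For the forward direction, I would fix a splitting $\sigma=(\sigma_0,\sigma_1)\colon\pp\to\widehat{\pp}$ and, assuming there is $D=(D_0,D_1)\in\Der_V(\widehat{\pp})$ with $\Phi(D)=(\beta,\alpha)$, define
\[
\lambda_0(x)=D_0\sigma_0(x)-\sigma_0\alpha_0(x),\qquad
\lambda_1(a)=D_1\sigma_1(a)-\sigma_1\alpha_1(a).
\]
The identities $p_0D_0\sigma_0=\alpha_0=p_0\sigma_0\alpha_0$ and $p_1D_1\sigma_1=\alpha_1=p_1\sigma_1\alpha_1$ show $\lambda_0,\lambda_1$ land in $V_0,V_1$. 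Next I would expand the derivation axioms of $D$ on the generic elements $e_1=\sigma_0(x)+u$, $e_2=\sigma_0(y)+v$, $h_1=\sigma_1(a)+m$. Expanding $D_0\widehat{f}(h_1)=\widehat{f}D_1(h_1)$ yields \eqref{RCOCY1}; expanding $D_0[e_1,e_2]=[D_0e_1,e_2]+[e_1,D_0e_2]$ splits by its four components (pure $\sigma_0\sigma_0$, $\sigma_0 V$, $V\sigma_0$, $VV$) into \eqref{RCOCY2}, \eqref{RCOCY5} and \eqref{RCOCY6}; the analogous expansions on $[\pp_0,\pp_1]$ and $[\pp_1,\pp_0]$ give \eqref{RCOCY3}, \eqref{RCOCY7}, \eqref{RCOCY10} and \eqref{RCOCY4}, \eqref{RCOCY8}, \eqref{RCOCY9} respectively.

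For the reverse direction, given $\lambda=(\lambda_0,\lambda_1)$ satisfying \eqref{RCOCY1}--\eqref{RCOCY10}, I would define $D=(D_0,D_1)\colon\widehat{\pp}\to\widehat{\pp}$ by
\[
D_0(\sigma_0(x)+v)=\sigma_0\alpha_0(x)+\lambda_0(x)+\beta_0(v),\qquad
D_1(\sigma_1(a)+m)=\sigma_1\alpha_1(a)+\lambda_1(a)+\beta_1(m).
\]
Linearity is immediate and $D|_V=(\beta_0,\beta_1)$, $\overline{D}=(\alpha_0,\alpha_1)$ by construction. The verification that $D$ is a crossed module derivation reduces to four checks: $D_0\widehat{f}=\widehat{f}D_1$ from \eqref{RCOCY1}; and the Leibniz rule on each of the brackets $[\sigma_0(x)+u,\sigma_0(y)+v]$, $[\sigma_0(x)+u,\sigma_1(a)+m]$, $[\sigma_1(a)+m,\sigma_0(x)+u]$, which after inserting the structure formulas of the abelian extension from Proposition \ref{crossed:semidirectproduct} split by components exactly as in the forward direction and are accounted for by \eqref{RCOCY2}--\eqref{RCOCY10}.

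The main obstacle is not conceptual but purely bookkeeping: one must carefully match the ten conditions against the four (+ one) derivation axioms, and confirm that each axiom decomposes into the expected components with no extra constraint hidden by the $l_3=0$ reduction. In particular, I would double-check that the mixed-term pieces on the right-hand sides of the Leibniz rule, of type $[\lambda_0(x),\sigma_0(y)]$ and $[\sigma_0(x),\beta_0(v)]$, are correctly absorbed by \eqref{RCOCY5}--\eqref{RCOCY10} (the representation-compatibility of $(\beta,\alpha)$), so that no spurious condition appears. Once this matching is verified both ways, the theorem follows directly.
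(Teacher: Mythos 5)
Your proposal is correct and matches the paper's (implicit) approach: the paper states this theorem without proof as the strict specialization of Theorem \ref{main-thm1}, and your reduction — killing $l_3$, $\theta\in\Hom(\otimes^3\pp_0,V_1)$, $\alpha_2$, $\lambda_2$ and $l_2,m_2,r_2$, so that \eqref{COCY5} and \eqref{COCY12}--\eqref{COCY14} trivialize while the remaining conditions become \eqref{RCOCY1}--\eqref{RCOCY10} — together with the explicit construction of $D$ from $\lambda$ is exactly the intended argument. Just be careful that the symbol $\theta$ in \eqref{RCOCY1} is the paper's renaming of the earlier $\psi:\pp_1\to V_0$ (the equivariance defect), not the vanishing $3$-cochain; your component bookkeeping already treats it correctly.
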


\begin{rmk}\label{{rem-compatible3}}
	A pair $(\be,\, \al) \in \Der(V) \times \Der(\pp)$ is called compatible if the conditions (\ref{RCOCY5}) -(\ref{RCOCY10}) hold. Equivalently, the following diagram commutes:
	$$
	\xymatrix{
		\pp \ar[d]_{\rho} \ar[r]^{\al} & \pp \ar[d]^{\rho}\\
		\End(V)\hspace*{3mm} \ar[r]_{f \mapsto D f D^{-1}} & \hspace*{3mm}\End(V)\\}
	$$
\end{rmk}
It is easy to see that the set
$$D_\rho(\pp,V)=\{(\beta,\, \alpha) \in \Der(V) \times \Der(\pp)|(\beta,\, \alpha) ~\text{satisfies ~ (\ref{RCOCY5}) -(\ref{RCOCY10}) } \}$$
of all compatible pairs is a subgroup of $\Der(V) \times \Der(\pp)$. Conditions (\ref{RCOCY5}) -(\ref{RCOCY10}) of the above theorem shows that every inducible pair is compatible.

Let
\begin{eqnarray*}
&&\ker \Phi=\Der^{V,\pp}(\widehat{\pp})= \big\{{D} \in \Der(\widehat{\pp})~|~ \Phi({D}_0)=(1_{V_0},1_{\pp_0}), ~\Phi({D}_1)=(1_{V_1},1_{\pp_1}),\big\}.
\end{eqnarray*}
and
\begin{eqnarray*}
&&{Z}^1(\pp,V) %& = & \big\{\lambda \in {C}^1(\fg,V)~|~ \partial^1(\lambda)=0 \big\}\\
= \big\{\lambda \in {C}^1(\pp,V)~\big\}
\end{eqnarray*}

let $\lambda=(\lambda_0, \lambda_1)$ satisfies the following formulas
\begin{align*}
&\dM_\frkh \lambda_1(a)-\lambda_0 \dM_\g (a)=0,\\
&l_0(x)\lambda_0(y)+r_0(y)\lambda_0(x)-\lambda_0[x,y]_\g=0,\\
&l_0(x)\lambda_1(a)+r_1(a)\lambda_0(x)-\lambda_1[x,a]_\g=0,\\
&l_1(a)\lambda_1(x)+r_0(x)\lambda_0(a)-\lambda_1[a,x]_\g=0.
\end{align*}
For each $(\beta, \alpha) \in D_\rho(\pp,V)$, we define
$$\Psi: D_\rho(\pp,V)\to \End (Z^2(\pp,V))$$
by
\begin{align*}
\Psi(\beta, \alpha)(\theta)(a)=&\beta_0\theta(a)-\theta\alpha_1(a),\\
\Psi(\beta, \alpha)(\omega)(x,y)  = & \beta_0\omega(x,y)-\omega(\alpha_0(x),y)-\omega(x,\alpha_0(y)),\\
\Psi(\beta, \alpha)(\mu)(x,a) = & \beta_1\mu(x,a)-\mu(\alpha_0(x),a)-\mu(x,\alpha_1(a)),\\
\Psi(\beta, \alpha)(\nu)(a,x)=&\beta_1\nu(a,x)-\nu(\alpha_1(a),x)-\nu(a,\alpha_0(x)),\\
\end{align*}
 Since $(\theta,\omega,\mu,\nu)$ is the 2-cocycle corresponding to the extension $\mathcal{E}$, this defines a map
$$\varpi_{\mathcal{E}}: C_\rho(\pp,V) \to H^2(\pp,V)$$
given by
$$\varpi_{\mathcal{E}} (\beta, \alpha)= [(\theta,\omega,\mu,\nu)_{(\beta, \alpha)}]-[(\theta,\omega,\mu,\nu)],$$
where $[(\theta,\omega,\mu,\nu,\theta)_{(\beta, \alpha)}]$ is the cohomology class of $(\theta,\omega,\mu,\nu,\theta)_{(\beta, \alpha)}$.
This $\varpi_{\mathcal{E}}$ is the {Wells map}.

\begin{thm}
Let $\mathcal{E}:0 \to V \to\hp \to \pp \to 0$ be an abelian extension of Leibniz 2-algebras. Let $\rho: \pp \to  \End(V)$ denote the induced $\pp$-module structure on $V$. Then there is an exact sequence
\begin{equation}
0 \to {Z}^1(\pp,V) \stackrel{i}{\longrightarrow} \Aut_V(\hp) \stackrel{\Phi}{\longrightarrow} D_\rho(\pp,V) \stackrel{\varpi_{\mathcal{E}}}{\longrightarrow} H^2(\pp,V).
\end{equation}
\end{thm}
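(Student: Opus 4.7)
The plan is to mirror the proof of Theorem \ref{wells-sequence1}, specialized to the crossed module setting where the trilinear bracket $l_3$ is absent. (Note that in the statement above the symbol $\Aut_V(\hp)$ in this derivation-focused subsection should be read as $\Der_V(\hp)$, matching the codomain $D_\rho(\pp,V)$ and the sequence (\ref{wells-seq1}).) The proof breaks naturally into establishing exactness at each of the three intermediate positions.

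First, I would verify exactness at $\Der_V(\hp)$ by constructing an isomorphism $Z^1(\pp,V) \cong \Der^{V,\pp}(\hp) = \ker \Phi$ as groups, mimicking the proposition following Theorem \ref{main-thm1}. Given $\lambda = (\lambda_0,\lambda_1) \in Z^1(\pp,V)$, define $D_\lambda$ by $D_{\lambda_0}(\sigma_0(x)+v) = \lambda_0(x)$ and $D_{\lambda_1}(\sigma_1(a)+m) = \lambda_1(a)$; the four cocycle identities defining $Z^1(\pp,V)$ are precisely what is needed for $D_\lambda$ to be a derivation of $\hp$ (the absence of $l_3$ eliminates the trilinear condition that appears in the Leibniz 2-algebra case), and by construction $\Phi(D_\lambda) = (0,0)$. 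The inverse sends $D \in \ker\Phi$ to $\lambda_i(x) = D_i\sigma_i(x)$; verifying this is a two-sided inverse and a group homomorphism is routine.

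Second, I would check exactness at $D_\rho(\pp,V)$, which is the content $\operatorname{im}\Phi = \ker \varpi_{\mathcal{E}}$. This is where Theorem \ref{main-thm32} does the heavy lifting. If $(\beta,\alpha) = \Phi(D)$ for some $D \in \Der_V(\hp)$, define
\[
\lambda_0(x) = D_0\sigma_0(x) - \sigma_0\alpha_0(x), \qquad \lambda_1(a) = D_1\sigma_1(a) - \sigma_1\alpha_1(a).
\]
Conditions (\ref{RCOCY1})--(\ref{RCOCY4}) of Theorem \ref{main-thm32} then assert exactly that $\Psi(\beta,\alpha)(\theta,\omega,\mu,\nu) = \partial^1\lambda$, so $\varpi_{\mathcal{E}}(\beta,\alpha) = [\Psi(\beta,\alpha)(\theta,\omega,\mu,\nu)] = 0$. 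Conversely, if $\varpi_{\mathcal{E}}(\beta,\alpha) = 0$ then some $\lambda \in C^1(\pp,V)$ witnesses the coboundary, and the same four identities in Theorem \ref{main-thm32} are satisfied, so $(\beta,\alpha)$ is inducible, i.e.\ lies in $\operatorname{im}\Phi$.

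Third, injectivity of $i : Z^1(\pp,V) \to \Der_V(\hp)$ is immediate from the explicit formula for $D_\lambda$: if $D_\lambda = 0$ then $\lambda_0 = \lambda_1 = 0$. The group homomorphism property of $\Phi$ and the fact that its image lands in the compatible subgroup $D_\rho(\pp,V)$ (rather than all of $\Der(V) \times \Der(\pp)$) follow, respectively, by a direct computation and by applying the derivation property of $D$ to brackets $[\sigma_0(x), v]$, $[v, \sigma_0(x)]$, $[\sigma_1(a),v]$, etc., which forces conditions (\ref{RCOCY5})--(\ref{RCOCY10}).

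There is no serious conceptual obstacle: all the technical lemmas have been established in Section 3 and Theorem \ref{main-thm32}, and passing from strict Leibniz 2-algebras to crossed modules only simplifies the bookkeeping (no $\alpha_2$ component, no $\theta$ in the 2-cocycle, no Jacobiator identity). The main risk is purely clerical, namely keeping consistent track of the four left/right action maps $l_0, r_0, l_1, r_1$ when reducing the five-term 2-cocycle $(\psi,\omega,\mu,\nu,\theta)$ of the Leibniz 2-algebra case down to the four-term cocycle $(\theta,\omega,\mu,\nu)$ relevant here.
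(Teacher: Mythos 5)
Your proposal is correct and follows essentially the same route the paper takes: the theorem is presented there without an explicit proof, as the crossed-module specialization (no $\alpha_2$, no $l_3$, four-term cocycle) of the derivation Wells sequence of Theorem \ref{wells-sequence1}, and your three exactness checks reproduce exactly the arguments of that section. You are also right that the middle term should read $\Der_V(\hp)$ rather than $\Aut_V(\hp)$, and that $\ker\Phi$ for derivations consists of pairs inducing $(0,0)$ rather than identities; both are slips in the paper's statement that your reading corrects.
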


\section*{Acknowledgements}
%This work is supported in part by National Natural Science Foundation of China ().
This is a primary edition, something will be modified in the future.

\section*{Declarations}

 {\bf Competing interests} There are no conflicts of interest for this work.

\noindent {\bf Availability of data and materials} Data sharing is not applicable to this article as no datasets were generated or analyzed during the current study.

\vskip7pt
\footnotesize{
\noindent Wei Zhong\\
College of Mathematics and Information Science,\\
Henan Normal University, Xinxiang 453007, P. R. China};\\
 E-mail address: \texttt{{zhongweiHTU@yeah.net }}

 \vskip7pt
\footnotesize{
\noindent Tao Zhang\\
College of Mathematics and Information Science,\\
Henan Normal University, Xinxiang 453007, P. R. China;\\
 E-mail address: \texttt{{zhangtao@htu.edu.cn}}

\end{document}